\documentclass[10pt]{amsart}

\usepackage{amsmath}
\usepackage{amsthm}
\usepackage{amssymb}

\usepackage[arrow, curve, frame, matrix, graph, tips]{xy}
\usepackage{diagrams}

\theoremstyle{plain}
\newtheorem{dfn}[subsection]{Definition}
\newtheorem{thm}[subsection]{Theorem}
\newtheorem{prp}[subsection]{Proposition}
\newtheorem{cor}[subsection]{Corollary}
\newtheorem{lma}[subsection]{Lemma}

\theoremstyle{remark}
\newtheorem{rmk}[subsection]{Remark}

\def\Dd{\mathcal{D}}
\def\Ee{\mathcal{E}}
\def\Ff{\mathcal{F}}

\def\eps{\epsilon}

\def\EssIm{\mathrm{EssIm}}
\def\Aa{\mathcal{A}}
\def\Bb{\mathcal{B}}
\def\inc{\hookrightarrow}
\def\op{\mathrm{op}}
\def\Mod{\mathrm{Mod}}
\def\Mnd{\mathrm{Mnd}}
\def\Th{\mathrm{Th}}

\def\CC{\mathbb{C}}
\def\DD{\mathbb{D}}
\def\Sh{\mathrm{Sh}}
\def\Acyc{\mathit{Acyc}}

\def\Sets{\mathrm{Sets}}
\def\Ff{\mathcal{F}}
\def\com{\mathrm{com}}
\def\NN{\mathbb{N}}
\def\gen{\mathrm{gen}}
\def\sym{\mathrm{sym}}

\newcommand{\tn}[1]{\textnormal{#1}}
\newcommand{\tnb}[1]{\textnormal{\bf #1}}

\newcommand{\PSh}[1]{\widehat{#1}}
\newcommand{\Z}{\mathbb{Z}}
\newcommand{\N}{\mathbb{N}}

\newcommand{\id}{\tn{id}}
\newcommand{\ca}[1]{\mathcal{#1}}
\newcommand{\ladj}{\dashv}
\newcommand{\iso}{\cong}
\newcommand{\catequiv}{\simeq}
\newcommand{\Graph}{\tnb{Gph}}
\newcommand{\GraphSite}{\mathbb{G}_{{\leq}1}}
\newcommand{\InvGraph}{{\tn{i-}}{\tnb{Gph}}}
\newcommand{\InvGraphSite}{{\tn{i-}}\mathbb{G}_{{\leq}1}}
\newcommand{\Gpd}{\tnb{Gpd}}
\newcommand{\Fact}[3]{\tn{Fact}_{#1,#2}(#3)}
\DeclareMathOperator*{\colim}{\tn{colim}}

\begin{document}
\title{Monads with arities and their associated theories}

\author{Clemens Berger, Paul-Andr\'e Melli\`es and Mark Weber}

\date{February 10, 2011}

\subjclass{Primary 18C10, 18C15; Secondary 18D50, 18B40}
\keywords{Algebraic theory, density, arity, monad, operad, groupoid}

\begin{abstract}After a review of the concept of ``monad with arities'' we show that the category of algebras for such a monad has a canonical dense generator. This is used to extend the correspondence between finitary monads on sets and Lawvere's algebraic theories to a general correspondence between monads and theories for a given category with arities. As application we determine arities for the free groupoid monad on involutive graphs and recover the symmetric simplicial nerve characterisation of groupoids.\end{abstract}

\maketitle

\section*{Introduction.}
In his seminal work \cite{La} Lawvere constructed for every variety of algebras, defined by finitary operations and relations on sets, an algebraic theory whose $n$-ary operations are the elements of the free algebra on $n$ elements. He showed that the variety of algebras is equivalent to the category of models of the associated algebraic theory. A little later, Eilenberg-Moore \cite{EM} defined algebras for monads (``triples''), and it became clear that Lawvere's construction is part of an equivalence between (the categories of) finitary monads on sets and algebraic theories respectively; for an excellent historical survey we refer the reader to Hyland-Power \cite{HP}, cf. also the recent book by Ad\'amek-Rosick\'y-Vitale \cite{ARV}.

In \cite{Be} the first named author established a formally similar equivalence between certain monads on globular sets (induced by Batanin's globular operads \cite{Ba}) and certain globular theories, yet he did not pursue the analogy with algebraic theories any further. The main purpose of this article is to develop a framework in which such \emph{monad/theory correspondences} naturally arise. Central to our approach is the so-called \emph{nerve theorem} which gives sufficient conditions under which algebras over a monad can be represented as models of an appropriate theory.


For the general formulation we start with a dense generator $\Aa$ of an arbitrary category $\,\Ee$. The objects of $\Aa$ are called the \emph{arities} of $\Ee$. According to the third named author \cite{W} (on a suggestion of Steve Lack) a monad $T$ on $\Ee$ which preserves the density presentation of the arities in a strong sense (cf. Definition \ref{monadwitharities}) is called a \emph{monad with arities} $\Aa$. The associated theory $\Theta_T$ is the full subcategory of the Eilenberg-Moore category $\Ee^T$ spanned by the free $T$-algebras on the arities. The nerve theorem identifies then $T$-algebras with $\Theta_T$-models, i.e. presheaves on $\Theta_T$ which typically take certain colimits in $\Theta_T$ to limits in sets (cf. Definition \ref{dfn:theory}).

The algebraic theories of Lawvere arise by taking $\Ee$ to be the category of sets, and $\Aa$ (a skeleton of) the full subcategory of finite sets (cf. Section \ref{sct:algebraictheory}). Our terminology is also motivated by another example; namely, if $\Ee$ is the category of directed graphs, $\Aa$ the full subcategory spanned by finite directed edge-paths, and $T$ the free category monad, the associated theory is the simplex category $\Delta$, and the nerve theorem identifies small categories with simplicial sets fulfilling certain exactness conditions, originally spelled out by Grothendieck \cite{G} and Segal \cite{Se0}. More generally, if $\Ee$ is the category of globular sets, $\Aa$ the full subcategory of globular pasting diagrams, and $T$ the free $\omega$-category monad, the associated theory is Joyal's cell category \cite{J}, and the nerve theorem identifies small $\omega$-categories with cellular sets fulfilling generalised Grothendieck-Segal conditions \cite{Be}.

Inspired by these examples and by Leinster's nerve theorem \cite{L} for strongly cartesian monads on presheaf categories, the third named author \cite{W} established a general nerve theorem for monads with arities on cocomplete categories. Recently the second named author \cite{M} observed that there is no need of assuming cocompleteness and that the concepts of theory and model thereof carry over to this more general context. He sketched a $2$-categorical proof of a general monad/theory correspondence on the basis of Street-Walter's \cite{SW} axiomatics for Yoneda structures.

The following text contains concise proofs of the nerve theorem and the resulting monad/theory correspondence. The flexibility of our approach lies in the relative freedom for the choice of convenient arities: their density is the only requirement. Different choices lead to different classes of monads and to different types of theories. The \emph{rank} of a monad is an example of one possible such choice. We have been careful to keep the formalism general enough so as to recover the known examples. We have also taken this opportunity to give a unified account of several key results of \cite{Be,W0,W,M}, which hopefully is useful, even for readers who are familiar with our individual work. Special attention is paid to the free groupoid monad on involutive graphs for reasons explained below. The article is subdivided into four sections:\vspace{1ex}

Section 1 gives a new and short proof of the nerve theorem (cf. Theorem \ref{T2}) based on the essential image-factorisation of strong monad morphisms. We show that classical results of Gabriel-Ulmer \cite{GU} and Ad\'amek-Rosick\'y \cite{AR} concerning Eilenberg-Moore categories of $\alpha$-accessible monads in $\alpha$-accessible categories can be considered as corollaries of our nerve theorem (cf. Theorem \ref{T3}).

Section 2 is devoted to alternative formulations of the concept of monad with arities. We show in Proposition \ref{prp:EM-object} that  monads with arities are precisely the monads (in the sense of Street \cite{St0}) of the $2$-category of categories with arities, arity-respecting functors and natural transformations. In Proposition \ref{prp:monad-with-arities-elementary} arity-respecting functors are characterised via the connectedness of certain factorisation categories. We study in some detail \emph{strongly cartesian} monads, i.e. cartesian monads which are local right adjoints, and recall from \cite{W0,W} that they allow a calculus of \emph{generic factorisations}. This is used in Theorem \ref{thm:lra->arities} to show that every strongly cartesian monad $T$ comes equipped with \emph{canonical arities} $\Aa_T$. The shape of these canonical arities is essential for the behaviour of the associated class of monads. The monads induced by \emph{$T$-operads} in the sense of Leinster \cite{L0} are monads with arities $\Aa_T$.

Section 3 introduces the concept of theory appropriate to our level of generality, following \cite{M}. The promised equivalence between monads and theories for a fixed category with arities is established in Theorem \ref{thm:monad-theory}. This yields as a special case the correspondence between finitary monads on sets and Lawvere's algebraic theories. We introduce the general concept of a \emph{homogeneous theory}, and obtain in Theorem \ref{thm:cartesian-homogeneous}, for each strongly cartesian monad $T$ (whose arities have no symmetries), a correspondence between $T$-operads and $\Theta_T$-homogeneous theories. This yields in particular the correspondence \cite{Be,A} between Batanin's globular $\omega$-operads and $\Theta_\omega$-homogeneous theories where $\Theta_\omega$ denotes Joyal's cell category. In Section \ref{sct:Gamma} we show that \emph{symmetric operads} can be considered as $\Gamma$-homogeneous theories, where the category $\Gamma$ of Segal \cite{Se} is directly linked with the algebraic theory of commutative monoids. This is related to recent work by Lurie \cite{Lu} and Batanin \cite{Ba2}.

Section 4 studies the \emph{free groupoid monad} on the category of involutive graphs. This example lies qualitatively in between the two classes of monads with arities which have been discussed so far, namely the $\alpha$-accessible monads on $\alpha$-accessible categories (with arities the $\alpha$-presentable objects) and the strongly cartesian monads on presheaf categories (endowed with their own canonical arities). Indeed, the category of involutive graphs is a presheaf category, but the free groupoid monad is not cartesian (though finitary). In Theorem \ref{thm:A-arities-for-G} we show that the finite connected acyclic graphs (viewed as involutive graphs) endow the free groupoid monad with arities, and that this property may be used to recover Grothendieck's symmetric simplicial characterisation of groupoids \cite{G} as an instance of the nerve theorem.

Let us briefly mention some further developments and potential applications.

-- Our methods should be applicable in an enriched setting, in the spirit of what has been done for algebraic theories by Nishizawa-Power \cite{NP}. Ideally, the $2$-category of categories with arities (cf. \ref{section:arity-respecting}) could be replaced with an enriched version of it.

-- The monad/theory correspondence of Section 3 strongly suggests a combinatorial formulation of \emph{Morita equivalence} between monads with same arities.  Such a concept would induce a theory/variety duality as the one established by Ad\'amek-Lawvere-Rosick\'y \cite{ALR} for idempotent-complete algebraic theories.

-- Our notion of \emph{homogeneous theory} captures the notion of \emph{operad} in two significant cases: globular operads (cf. \ref{sct:Theta}) and symmetric operads (cf. \ref{sct:Gamma}). The underlying conceptual mechanism needs still to be clarified.

-- A future extension of our framework will contain a formalism of \emph{change-of-arity} functors. A most interesting example is provided by the symmetrisation functors of Batanin \cite{Ba2} which convert globular $n$-operads into symmetric operads (cf. \ref{sct:Gamma}).

-- The treatment in Section 4 of the free groupoid monad on involutive graphs is likely to extend in a natural way to the free $n$-groupoid monad on involutive $n$-globular sets. This is closely related to recent work by Ara \cite{A,A2}.

-- The notion of monad with arities sheds light on the concept of \emph{side effects} in programming languages. It should provide the proper algebraic foundation for a presentation of \emph{local stores} in an appropriate presheaf category, cf. \cite{HP,M}.\vspace{1ex}

\emph{Acknowledgements:} We thank the organisers of the Category Theory Conference CT2010 in Genova, especially Giuseppe Rosolini, for the stimulating atmosphere of this conference, which has been at the origin of this article. We are grateful to Jiri Ad\'amek, Dimitri Ara, Michael Batanin, Brian Day, Martin Hyland, Steve Lack, Georges Maltsiniotis and Eugenio Moggi for helpful remarks and instructive discussions.\vspace{1ex}

\emph{Notation and terminology:} All categories are supposed to be locally small. For a monad $T$ on a category $\Ee$, the \emph{Eilenberg-Moore} and \emph{Kleisli} categories of $T$ are denoted $\Ee^T$ and $\Ee_T$ respectively. An isomorphism-reflecting functor is called \emph{conservative}. The category of set-valued presheaves on $\Aa$ is denoted $\PSh\Aa$. For a functor $j:\Aa\to\Bb$, the left and right adjoints to the restriction functor $j^*:\PSh\Bb\to\PSh\Aa$ are denoted $j_!$ and $j_*$ respectively, and called \emph{left} and \emph{right Kan extension} along $j$.

\section{The nerve theorem.}

\subsection{Monad morphisms}\label{dfn:monad-morphism}--\vspace{1ex}

For a monad $(T,\mu,\eta)$ on a category $\Ee$, the Eilenberg-Moore category $\Ee^T$ of~$T$ comes equipped with forgetful functor $U:\Ee^T\to\Ee$ and left adjoint $F:\Ee\to\Ee^T$. The objects of~$\Ee^T$ are the \emph{$T$-algebras} in~$\Ee$, i.e. pairs $(X,\xi_X)$ consisting of an object $X$ and a morphism $\xi_X:TX\to X$ such that $\xi_X\cdot T\xi_X=\xi_X\mu_X$ and $\xi_X\eta_X=id_X$.

Following Street \cite{St0}, for given categories $\Ee_1$ and $\Ee_2$ with monads $(T_1,\mu_1,\eta_1)$ and $(T_2,\mu_2,\eta_2)$ respectively, a \emph{monad morphism} $(\Ee_1,T_1)\to(\Ee_2,T_2)$ is defined to be a pair $(\Phi,\rho)$ consisting of a functor $\Phi:\Ee_1\to\Ee_2$ and a natural transformation $\rho:T_2\Phi\Rightarrow\Phi T_1$ such that the following two diagrams commute:\vspace{1ex}

\begin{gather}\begin{diagram}[small,silent]\label{monad-identities}T_2T_2\Phi&\rImplies^{T_2\rho}&T_2\Phi T_1&\rImplies^{\rho T_1}&\Phi T_1T_1&\quad&\quad&\Phi&\rImplies^{\Phi\eta_1}&\Phi T_1\\\dImplies^{\mu_2\Phi}&&&&\dImplies_{\Phi\mu_1}&\quad&\quad&\dImplies^{\eta_2\Phi}&\ruImplies&\!\!\!\!\rho\\T_2\Phi&&\rImplies^{\rho}&&\Phi T_1&\quad&\quad&T_2\Phi&&\end{diagram}\end{gather}

A monad morphism $(\Phi,\rho):(\Ee_1,T_1)\to(\Ee_2,T_2)$ induces a commutative square \begin{gather}\label{mon}\begin{diagram}[small]\Ee_1^{T_1}&\rTo^{\overline{\Phi}}&\Ee_2^{T_2}\\\dTo^{U_1}&&\dTo_{U_2}\\\Ee_1&\rTo^{\Phi}&\Ee_2\end{diagram}\end{gather}
in which the functor $\overline{\Phi}:\Ee_1^{T_1}\to\Ee_2^{T_2}$ takes the $T_1$-algebra $(X,\xi_X)$ to the $T_2$-algebra $(\Phi X,\Phi\xi_X\cdot\rho_X)$. Conversely, any commutative square (\ref{mon}) induces a transformation $\rho=U_2\eps_2\overline{\Phi}F_1\circ T_2\Phi\eta_1:T_2\Phi\Rightarrow\Phi T_1$ fulfilling the identities (\ref{monad-identities}). It is then straightforward to check that this establishes a one-to-one correspondence between monad morphisms $(\Phi,\rho):(\Ee_1,T_1)\to(\Ee_2,T_2)$ and liftings $\overline{\Phi}:\Ee_1^{T_1}\to\Ee_2^{T_2}$ like in (\ref{mon}).

\subsection{Isofibrations and essential image-factorisations}\label{replete}--\vspace{1ex}

Recall that any functor $\Phi:\Ee_1\to\Ee_2$ factors as an essentially surjective functor $\Ee_1\to\EssIm(\Phi)$ followed by the inclusion $\EssIm(\Phi)\to\Ee_2$ of a full and replete subcategory. By definition, the \emph{essential image} $\EssIm(\Phi)$ is the full subcategory of $\Ee_2$ spanned by those objects which are isomorphic to an object in the image of $\Phi$. We call this factorisation the \emph{essential image-factorisation} of $\Phi$.

Observe that a subcategory is replete precisely when the inclusion is an isofibration. We use the term \emph{isofibration} for those functors $F:\Dd\to\Ee$ which have the property that for any isomorphism in $\Ee$ of the form $g:X\cong F(Y)$ there exists an isomorphism in $\Dd$ of the form $f:X'\cong Y$ such that $F(f)=g$. Isofibrations between small categories form the class of fibrations for the Joyal-Tierney model structure \cite{JT} on the category of small categories. The essential image-factorisation is the (up to isomorphism) unique factorisation $\Phi=\Phi_2\Phi_1$ such that $\Phi_1$ is essentially surjective and $\Phi_2$ is injective on objects, fully faithful and an isofibration.

\begin{prp}\label{P1}For any monad morphism $(\Phi,\rho):(\Ee_1,T_1)\to(\Ee_2,T_2)$ with induced lifting $\overline{\Phi}:\Ee_1^{T_1}\to\Ee_2^{T_2}$, the following properties hold:

\begin{itemize}\item[(a)]If $\Phi$ is faithful, then so is $\overline{\Phi}$.\item[(b)]If $\Phi$ is fully faithful and $\rho$ pointwise epimorphic, then $\overline{\Phi}$ is fully faihtful.\item[(c)]If $\Phi$ is fully faithful and $\rho$ an isomorphism, then $\overline{\Phi}$ is the pullback of~$\Phi$ along $U_2$; moreover, the essential image-factorisation of~$\overline{\Phi}$ may be identified with the pullback along $U_2$ of the essential image-factorisation of~$\Phi$.\end{itemize}\end{prp}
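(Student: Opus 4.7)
Part (a) follows from the general fact that the monadic forgetful functor $U_1:\Ee_1^{T_1}\to\Ee_1$ is faithful, combined with the commutativity $U_2\overline{\Phi}=\Phi U_1$ from square (\ref{mon}) and the hypothesis that $\Phi$ is faithful. For part (b) I would prove fullness directly: writing a $T_2$-algebra morphism $g:\overline{\Phi}(X,\xi_X)\to\overline{\Phi}(Y,\xi_Y)$ uniquely as $g=\Phi f$ via full faithfulness of $\Phi$, the $T_2$-algebra morphism identity for $g$ combined with the naturality of $\rho$ becomes $\Phi(\xi_Y\cdot T_1 f)\cdot\rho_X=\Phi(f\cdot\xi_X)\cdot\rho_X$; cancelling the epimorphism $\rho_X$ and using faithfulness of $\Phi$ then yields the $T_1$-algebra morphism identity for $f$. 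Faithfulness of $\overline{\Phi}$ is inherited from part (a).

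For the first claim of part (c), the plan is to show that the square (\ref{mon}) is a pullback by constructing an inverse to the canonical comparison functor $\Ee_1^{T_1}\to\Ee_1\times_{\Ee_2}\Ee_2^{T_2}$. An object $(X,\xi)$ of the pullback is an object $X\in\Ee_1$ together with a $T_2$-algebra structure $\xi:T_2\Phi X\to\Phi X$; full faithfulness of $\Phi$ and invertibility of $\rho_X$ then supply a unique $\xi_X:T_1 X\to X$ satisfying $\Phi\xi_X=\xi\cdot\rho_X^{-1}$. The main bookkeeping step is the verification of the two $T_1$-algebra axioms for $\xi_X$: after applying the faithful $\Phi$, each axiom reduces to an identity obtained by combining the corresponding $T_2$-algebra axiom for $\xi$, the naturality of $\rho$, and one of the two monad morphism identities (\ref{monad-identities}), using invertibility of $\rho$ to push $\rho^{-1}$ across. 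Morphisms of the pullback are handled in the same spirit. I expect this axiom check to be the most delicate step of the whole proposition, though still entirely mechanical.

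For the second claim of part (c), I would pull the essential image-factorisation $\Phi=\Phi_2\Phi_1$ back along $U_2$. Repleteness of $\EssIm(\Phi)$ combined with $\rho$ being an isomorphism ensures that $T_2$ restricts to a monad on $\EssIm(\Phi)$, turning $\Phi_2$ into a strict monad morphism and making $\Phi_1$ into a monad morphism to which the first claim of (c) again applies (the restricted $\rho$ is an iso, and $\Phi_1$ is fully faithful since $\Phi$ and $\Phi_2$ are). Two applications of that first claim identify the pulled-back factorisation with
\[\Ee_1^{T_1}\longrightarrow U_2^{-1}(\EssIm(\Phi))\hookrightarrow \Ee_2^{T_2}.\]
The right-hand inclusion, being the pullback of $\Phi_2$ along $U_2$, inherits from $\Phi_2$ the properties of being injective on objects, fully faithful, and an isofibration, all of which are stable under $1$-pullback in \textbf{Cat}. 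Essential surjectivity of the left-hand map is a transport-of-structure argument: given $(Y,\xi_Y)\in U_2^{-1}(\EssIm(\Phi))$, an isomorphism $\alpha:\Phi X\cong Y$ transports $\xi_Y$ to a $T_2$-algebra structure $\xi$ on $\Phi X$ which, by the first claim of (c), equals $\overline{\Phi}(X,\xi_X)$ for some $T_1$-algebra $(X,\xi_X)$; then $\alpha$ becomes an isomorphism $\overline{\Phi}(X,\xi_X)\cong(Y,\xi_Y)$ of $T_2$-algebras. Uniqueness of the essential image-factorisation finishes the identification.
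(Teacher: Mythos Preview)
Your proposal is correct and follows essentially the same approach as the paper: parts (a) and (b) are argued identically, and in (c) you make the same reduction to a restricted monad on $\EssIm(\Phi)$ and the same use of the first claim applied to $\Phi_1$ and $\Phi_2$. The only cosmetic differences are that the paper shortcuts full faithfulness of the comparison functor in (c) by invoking (b) rather than building an inverse on morphisms, and that it proves essential surjectivity of $\overline{\Phi}_1$ via the general fact that pulling back an essentially surjective functor along an isofibration preserves essential surjectivity, whereas you do the transport-of-structure by hand; conversely, your appeal to stability of isofibrations under pullback is cleaner than the paper's ad hoc argument for $\overline{\Phi}_2$.
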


\begin{proof}--

(a) The forgetful functor $U_1:\Ee_1^{T_1}\to\Ee_1$ is faithful. Therefore, if $\Phi$ is faithful so is $\Phi U_1=U_2\overline{\Phi}$, and hence $\overline{\Phi}$ is faithful as well.

(b) It remains to be shown that $\overline{\Phi}:\Ee_1^{T_1}\to\Ee_2^{T_2}$ is full. Let $(X,\xi_X)$ and $(Y,\xi_Y)$ be $T_1$-algebras and $g:\overline{\Phi}(X,\xi_X)\to\overline{\Phi}(Y,\xi_Y)$ be a map of $T_2$-algebras. By definition, we have $\overline{\Phi}(X,\xi_X)=(\Phi X,\Phi\xi_X\cdot\rho_X)$ and $\overline{\Phi}(Y,\xi_Y)=(\Phi Y,\Phi\xi_Y\cdot\rho_Y)$. Since $\Phi$ is full, there is map $f:X\to Y$ in $\Ee_1$ such that $\Phi f=g$.

We will show that $f$ is actually a map of $T_1$-algebras such that $\overline{\Phi}f=g$. Indeed, in the following diagram\begin{diagram}[small]T_2\Phi X&\rTo^{T_2\Phi f}&T_2\Phi Y\\\dTo^{\rho_X}&&\dTo_{\rho_Y}\\\Phi T_1 X&\rTo^{\Phi T_1 f}&\Phi T_1 Y\\\dTo^{\Phi \xi_X}&&\dTo_{\Phi \xi_Y}\\\Phi X&\rTo^{\Phi f}&\Phi Y\end{diagram}the outer rectangle commutes since $g=\Phi f$ is a map of $T_2$-algebras, the upper square commutes by naturality of $\rho$, so that the lower square also commutes because $\rho_X$ is an epimorphism. Therefore, since $\Phi$ is faithful, $f$ is a map of $T_1$-algebras.

(c) Let $\Psi:\Ee_1\times_{\Ee_2}\Ee_2^{T_2}\to\Ee_2^{T_2}$ be the categorical pullback of $\Phi$ along $U_2$. This functor is fully faithful since $\Phi$ is. Moreover,  $\overline{\Phi}=\Psi Q$ for a unique functor $Q:\Ee_1^{T_1}\to\Ee_1\times_{\Ee_2}\Ee_2^{T_2}$. Since by (b) $\overline{\Phi}$ is fully faithful, $Q$ is fully faithful as well. It remains to be shown that $Q$ is bijective on objects, i.e. that for each object $X$ of $\Ee_1$, $T_2$-algebra structures $\xi_{\Phi(X)}:T_2\Phi X\to\Phi X$ are in one-to-one correspondence with $T_1$-algebra structures $\xi_X:T_1 X\to X$ such that $\xi_{\Phi X}=\rho_X\Phi\xi_X$. Since $\rho_X$ is invertible and $\Phi$ fully faithful, we must have $\xi_X=\Phi^{-1}(\rho_X^{-1}\xi_{\Phi(X)})$. It is easy to check that this defines indeed a $T_1$-algebra structure on $X$.

For the second assertion, observe that the monad $(T_2,\mu_2,\eta_2)$ restricts to a monad $(T,\mu,\eta)$ on the essential image of $\Phi$, since by hypothesis $\rho:T_2\Phi\Rightarrow\Phi T_1$ is invertible. We get thus a monad morphism $(\Phi_1,\rho_1):(\Ee_1,T_1)\to(\EssIm(\Phi),T)$ by corestriction, as well as a monad morphism $(\Phi_2,\rho_2):(\EssIm(\Phi),T)\to(\Ee_2,T_2)$ with $\rho_2$ being an identity $2$-cell. Since by construction $(\Phi,\rho)$ is the composite monad morphism $(\Phi_2,\rho_2)(\Phi_1,\rho_1)$ we have the following commutative diagram\begin{diagram}[small]\Ee_1^{T_1}&\rTo^{\overline{\Phi}_1}&\EssIm(\Phi)^T&\rInto^{\overline{\Phi}_2}&\Ee_2^{T_2}\\\dTo^{U_1}&&\dTo^U&&\dTo^{U_2}\\\Ee_1&\rTo^{\Phi_1}&\EssIm(\Phi)&\rInto^{\Phi_2}&\Ee_2\end{diagram}in which the Eilenberg-Moore category $\EssIm(\Phi)^T$ may be identified with the categorical pullback $\EssIm(\Phi)\times_{\Ee_2}\Ee_2^{T_2}$. The left square is also a pullback since $\Phi_1$ is fully faithful and $\rho_1$ is invertible. All vertical functors are isofibrations. Therefore, since the pullback of an essentially surjective functor along an isofibration is again essentially surjective, the lifting $\overline{\Phi}_1$ of $\Phi_1$ is essentially surjective and fully faithful. The lifting $\overline{\Phi}_2$ of $\Phi_2$ is injective on objects. Moreover, since $U$ and $\Phi_2$ are isofibrations, the composite functor $\Phi_2 U=U_2\overline{\Phi}_2$ is an isofibration as well. Finally, since $U_2$ is a faithful isofibration, $\overline{\Phi}_2$ is itself an isofibration. Therefore, the factorisation $\overline{\Phi}=\overline{\Phi}_2\overline{\Phi}_1$ can be identified with the essential image-factorisation of $\overline{\Phi}$, and arises from the essential image-factorisation of $\Phi$ by pullback along $U_2$.\end{proof}

\subsection{Exact adjoint squares}\label{sct:adjoint}--\vspace{1ex}

We now want to extend Proposition \ref{P1} to more general squares than those induced by monad morphisms. To this end recall that a functor $R:\Dd\to\Ee$ is \emph{monadic} if $R$ admits a left adjoint $L:\Ee\to\Dd$ such that the comparison functor $K:\Dd\to\Ee^{RL}$ is an equivalence of categories. As usual, the comparison functor $K$ takes an object $Y$ of $\Dd$ to the $RL$-algebra $(RY,R\eps_Y)$, where $\eps$ denotes the counit of the adjunction and $(RL,R\eps L,\eta)$ is the monad induced by the adjunction, cf. Eilenberg-Moore \cite{EM}. Assume then that we are given a pseudo-commutative diagram\begin{gather}\label{adjoint}\begin{diagram}[small]\Dd_1&\rTo^\Psi&\Dd_2\\\dTo^{R_1}&{\overset{\phi}{\cong}}&\dTo_{R_2}\\\Ee_1&\rTo_{\Phi}&\Ee_2\end{diagram}\end{gather}with right adjoint functors $R_1,R_2$ and an invertible $2$-cell $\phi:\Phi R_1\cong R_2\Psi$. We denote the left adjoints by $L_1,L_2$ respectively. Such an \emph{adjoint square} will be called \emph{exact} if the adjoint $2$-cell $$\psi=\eps_2\Psi L_1\cdot L_2\phi L_1\cdot L_2\Phi\eta_1:L_2\Phi\Rightarrow\Psi L_1$$is also invertible. It is then straighforward to check that the natural transformation$$\rho=(\phi L_1)^{-1}(L_2\psi):R_2L_2\Phi\Rightarrow\Phi R_1L_1$$ defines a monad morphism $(\Phi,\rho):(\Ee_1,R_1L_1)\to(\Ee_2,R_2L_2)$ and hence a diagram\begin{gather}\label{adjoint2}\begin{diagram}[small]\Dd_1&\rTo^{\Psi}&\Dd_2\\\dTo^{K_1}&\cong&\dTo_{K_2}\\\Ee_1^{R_1L_1}&\rTo^{\overline{\Phi}}&\Ee_2^{R_2L_2}\\\dTo^{U_1}&&\dTo_{U_2}\\\Ee_1&\rTo^{\Phi}&\Ee_2\end{diagram}\end{gather}in which the lower square commutes and the upper square pseudo-commutes. Moreover, gluing of diagram (\ref{adjoint2}) gives back the initial adjoint square (\ref{adjoint}).

\begin{prp}\label{T1}For any exact adjoint square (\ref{adjoint}) with monadic functors $R_1,R_2$ and with fully faithful functor~$\Phi$, the functor $\Psi$ is also fully faithful and the essential image of $\Psi$ is obtained from the essential image of $\Phi$ by pullback along $R_2$.\end{prp}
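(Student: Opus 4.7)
The plan is to reduce the statement to Proposition~\ref{P1}(c) via the monadicity of $R_1$ and $R_2$. First I observe that exactness of the adjoint square forces the transformation $\rho=(\phi L_1)^{-1}(L_2\psi)$ to be an isomorphism: both $\phi$ and $\psi$ are invertible by hypothesis (the latter precisely because the square is exact), and $\rho$ is constructed as a vertical composite of their whiskerings. Thus the induced monad morphism $(\Phi,\rho):(\Ee_1,R_1L_1)\to(\Ee_2,R_2L_2)$ has $\rho$ pointwise invertible, while $\Phi$ is fully faithful by assumption. These are precisely the hypotheses of Proposition~\ref{P1}(c).

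Applying Proposition~\ref{P1}(c), I obtain that the lifting $\overline{\Phi}:\Ee_1^{R_1L_1}\to\Ee_2^{R_2L_2}$ in diagram~(\ref{adjoint2}) is the pullback of $\Phi$ along $U_2$, and that its essential image-factorisation is obtained from that of $\Phi$ by pullback along $U_2$. In particular $\overline{\Phi}$ is fully faithful and $\EssIm(\overline{\Phi})$ coincides with $\Ee_2^{R_2L_2}\times_{\Ee_2}\EssIm(\Phi)$.

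Next I transport this information across the comparison functors. Since $R_1,R_2$ are monadic, the functors $K_1,K_2$ in diagram~(\ref{adjoint2}) are equivalences of categories, the upper square pseudo-commutes (so $\overline{\Phi}K_1\iso K_2\Psi$), and by construction of the comparison functor one has $U_2K_2=R_2$. Full faithfulness is preserved and reflected by equivalences, so the full faithfulness of $\overline{\Phi}$ yields the full faithfulness of $\Psi$. For the essential image, the equivalence $K_2$ restricts to an equivalence between $\EssIm(\Psi)$ and $\EssIm(\overline{\Phi})$; combined with the identity $U_2K_2=R_2$, pulling back along $R_2$ agrees, up to this equivalence, with pulling back along $U_2$. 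Therefore $\EssIm(\Psi)\simeq\Dd_2\times_{\Ee_2}\EssIm(\Phi)$, which is the claimed description.

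The heart of the argument is the observation that exactness makes $\rho$ invertible, so Proposition~\ref{P1}(c) applies directly and does all the real work. The only point that requires care is the transfer of the essential image-factorisation across the equivalences $K_1,K_2$, but this reduces to the standard facts that equivalences preserve and reflect essential images and that the relevant vertical functors in~(\ref{adjoint2}) are isofibrations, so the pullback description is stable under replacing $(U_1,U_2)$ by $(R_1,R_2)$.
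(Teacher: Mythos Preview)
Your proof is correct and follows essentially the same route as the paper's: both invoke Proposition~\ref{P1}(c) (using that exactness makes $\rho$ invertible) to control $\overline{\Phi}$, and then transport the conclusion across the comparison equivalences $K_1,K_2$ via the pseudo-commutativity $K_2\Psi\cong\overline{\Phi}K_1$ and the identity $U_2K_2=R_2$. The paper phrases the essential-image transfer slightly differently---using that $K_1$ is essentially surjective to get $\EssIm(K_2\Psi)=\EssIm(\overline{\Phi})$ and then that $K_2$ is fully faithful to pull back---but this is exactly what underlies your claim that $K_2$ restricts to an equivalence $\EssIm(\Psi)\simeq\EssIm(\overline{\Phi})$.
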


\begin{proof}Since $\rho$ is invertible and $\Phi$ is fully faithful, Proposition \ref{P1}c shows that in the induced diagram (\ref{adjoint2}) the functor $\overline{\Phi}$ is fully faithful and its essential image is obtained as the pullback along $U_2$ of the essential image of $\Phi$. Since $K_1$ and $K_2$ are fully faithful by monadicity of $R_1$ and $R_2$, and since the upper square pseudo-commutes, $\Psi$ is fully faithful. Next, since isomorphic functors have the same essential image and $K_1$ is essentially surjective (again by monadicity of $R_1$), the essential image of $K_2\Psi$ coincides with the essential image of $\overline{\Phi}$. Finally, since $K_2$ is fully faithful, the essential image of $\Psi$ is obtained from the essential image of $\overline{\Phi}$ by pullback along $K_2$, and thus from the essential image of $\Phi$ by pullback along $U_2K_2=R_2$.\end{proof}

\subsection{Dense generators.}\label{densegenerator}--\vspace{1ex}

An inclusion of a full subcategory $i_\Aa:\Aa\inc\Ee$ is called a \emph{dense generator} of $\Ee$ if $\Aa$ is \emph{small} and the associated \emph{nerve functor} $\nu_\Aa:\Ee\to\PSh\Aa$ is \emph{fully faithful}. Recall that for each object $X$ of $\Ee$, the \emph{$\Aa$-nerve} $\nu_\Aa(X)$ is defined by $\nu_\Aa(X)(A)=\Ee(i_\Aa(A),X)$ where $A$ is an object of $\Aa$; we shall sometimes write $\nu_\Aa=\Ee(i_\Aa,-)$.

For each object $X$ of $\Ee$, we denote by $\Ee/X$ the \emph{slice category} over $X$, whose objects are arrows $Y\to X$ and whose morphisms are commuting triangles over $X$. Let $\Aa/X$ be the full subcategory of $\Ee/X$ spanned by those arrows $A\to X$ which have domain $A$ in $\Aa$.  The canonical projection functor $\Aa/X\to\Ee$ comes equipped with a natural transformation to the constant functor $c_X$. This natural transformation will be called the \emph{$\Aa$-cocone} over $X$. In particular, if $\Ee=\PSh\Aa$, this defines, for any presheaf $X$ on $\Aa$, the classical Yoneda-cocone on $X$, whose diagram is defined on the \emph{category of elements} $\Aa/X$ of $X$. It is well-known that the Yoneda-cocones are colimit-cocones. This property characterises dense generators as asserted by the following equally well-known lemma (cf. \cite[3.5]{GU}):

\begin{lma}A full subcategory $\Aa$ is a dense generator of $\Ee$ if and only if the $\Aa$-cocones in $\Ee$ are colimit-cocones in $\Ee$.\end{lma}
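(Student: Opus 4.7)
The plan is to show that both conditions reduce to a single statement about hom-sets of $\Ee$, via the familiar Yoneda-style identification of maps between nerves with compatible families of arrows. Fix $X, Y \in \Ee$. By the universal property of colimits, the $\Aa$-cocone over $X$ is a colimit-cocone in $\Ee$ precisely when, for every $Y$, the canonical map
\[
\Ee(X, Y) \;\longrightarrow\; \lim_{(f : A \to X) \in \Aa/X} \Ee(A, Y), \qquad g \longmapsto (g f)_f,
\]
is a bijection. On the other hand, density of $\Aa$ asserts that, for every $X, Y$, the map
\[
\Ee(X, Y) \;\longrightarrow\; \PSh\Aa(\nu_\Aa X, \nu_\Aa Y), \qquad g \longmapsto \nu_\Aa(g),
\]
is a bijection. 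The task is therefore to identify these two target sets together with their respective maps from $\Ee(X, Y)$.

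The key step is a natural bijection
\[
\PSh\Aa(\nu_\Aa X, \nu_\Aa Y) \;\cong\; \lim_{(f : A \to X) \in \Aa/X} \Ee(A, Y).
\]
Since $\Aa \inc \Ee$ is a full subcategory, $\nu_\Aa A$ coincides with the representable presheaf $\Aa(-, A)$ for $A \in \Aa$. A natural transformation $\alpha : \nu_\Aa X \to \nu_\Aa Y$ amounts to a family of maps $\alpha_B : \Ee(B, X) \to \Ee(B, Y)$ indexed by $B \in \Aa$ and natural in $B$. Evaluating $\alpha_A$ at $f : A \to X$ produces an arrow $\alpha_A(f) : A \to Y$, and the naturality of $\alpha$ with respect to $h : A' \to A$ in $\Aa$ reads $\alpha_{A'}(f h) = \alpha_A(f) \circ h$. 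This is exactly the compatibility required of a cone on the projection $\Aa/X \to \Ee$ with vertex $Y$, and the assignment $\alpha \mapsto (\alpha_A(f))_{f}$ is a bijection from $\PSh\Aa(\nu_\Aa X, \nu_\Aa Y)$ onto the displayed limit. Under this identification, the post-composition map $g \mapsto \nu_\Aa(g)$ corresponds precisely to $g \mapsto (g f)_f$.

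It follows that both conditions in the lemma are equivalent to the single statement that, for all $X, Y \in \Ee$, the canonical map $\Ee(X, Y) \to \lim_{\Aa/X} \Ee(A, Y)$ is a bijection; hence they are equivalent to each other. I do not expect a serious obstacle: the only piece requiring care is that the bijection of the middle paragraph intertwines the two candidate maps out of $\Ee(X, Y)$, but this is immediate from unwinding the definition of $\nu_\Aa$ on morphisms.
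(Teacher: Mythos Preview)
Your argument is correct and is essentially the same as the paper's: both rest on identifying $\Aa/X$ with the category of elements of $\nu_\Aa X$ and using that the Yoneda-cocone on $\nu_\Aa X$ is a colimit. The paper phrases this conceptually (the nerve takes $\Aa$-cocones to Yoneda-cocones, and a fully faithful functor reflects such colimits), while you unpack the same content into the explicit hom-set bijection $\PSh\Aa(\nu_\Aa X,\nu_\Aa Y)\cong\lim_{\Aa/X}\Ee(A,Y)$; the underlying idea is identical.
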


\begin{proof}If the $\Aa$-nerve is fully faithful, it takes the $\Aa$-cocones to the corresponding Yoneda-cocones; since the latter are colimit-cocones in $\PSh\Aa$, the former are colimit-cocones in $\Ee$. Conversely, if $\Aa$-cocones are colimit-cocones, then any map $f$ in $\Ee$ can uniquely be recovered (as a colimit) from its nerve $\nu_\Aa(f)$. Indeed, for any object $X$ of $\Ee$, the category of elements of $\nu_\Aa(X)$ may be identified with $\Aa/X$.\end{proof}

\begin{dfn}\label{monadwitharities}A monad $T$ on a category $\Ee$ with dense generator $\Aa$ is called a monad with arities $\Aa$ if the composite functor $\nu_\Aa T$ takes the $\Aa$-cocones in $\Ee$ to colimit-cocones in $\PSh\Aa$.\end{dfn}

For any monad $T$ on a category $\Ee$ with dense generator $\Aa$, we define the category $\Theta_T$ to be the full subcategory of $\Ee^T$ spanned by the free $T$-algebras on the objects of $\Aa$. The full inclusion $\Theta_T\inc \Ee^T$ will be denoted by $i_T$. There is a uniquely determined functor $j_T:\Aa\to\Theta_T$ such that $i_Tj_T=Fi_\Aa$ where $F:\Ee\to\Ee^T$ is left adjoint to the forgetful functor $U:\Ee^T\to\Ee$. This factorisation of $Fi_\Aa$ into a bijective-on-objects functor $j_T:\Aa\to\Theta_T$ followed by a fully faithful functor $i_T:\Theta_T\to\Ee^T$ can also be used to define the category $\Theta_T$ up to isomorphism. The nerve associated to the full inclusion $i_T$ will be denoted by $\nu_T:\Ee^T\to\PSh{\Theta_T}$.

The following diagram summarises the preceding definitions and notations:
\begin{gather}\label{nerve}\begin{diagram}[small,p=1mm]\Theta_T&\rTo^{i_T}&\Ee^T&\rTo^{\nu_T}&\PSh{\Theta_T}\\\uTo^{j_T}&&\uTo^F\dTo_U&&\dTo_{j_T^*}\\\Aa&\rTo^{i_\Aa}&\Ee&\rTo^{\nu_\Aa}&\PSh\Aa\end{diagram}\end{gather}
For any monad $T$, the left square is commutative by construction, while the right square is an \emph{adjoint square} in the sense of Section \ref{sct:adjoint}. Indeed, $U$ and $j^*_T$ are right adjoint (even \emph{monadic}) functors, and we have the following natural isomorphism: $$\nu_\Aa U=\Ee(i_\Aa,U(-))\cong\Ee^T(Fi_\Aa,-)=\Ee^T(i_Tj_T,-)=j_T^*\nu_T.$$

\begin{prp}\label{P3}A monad~$T$ on a category~$\Ee$ with dense generator~$\Aa$ is a monad with arities~$\Aa$ if and only if the right square in (\ref{nerve}) is an exact adjoint square.\end{prp}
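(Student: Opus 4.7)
The plan is to unfold exactness of the right square and recognise Definition~\ref{monadwitharities} inside it. By Section~\ref{sct:adjoint}, the square is exact precisely when the mate
$$\psi\;=\;\eps\,\nu_T F\cdot(j_T)_!\phi F\cdot(j_T)_!\nu_\Aa\eta\;:\;(j_T)_!\nu_\Aa\Longrightarrow\nu_T F$$
of the natural iso $\phi:\nu_\Aa U\iso j_T^*\nu_T$ is invertible, whereas the arity condition requires that $\nu_\Aa T=\nu_\Aa UF$ send $\Aa$-cocones to colimit-cocones in $\PSh\Aa$. The starting observation is that $j_T:\Aa\to\Theta_T$ is bijective on objects, so that $j_T^*$ is conservative; hence invertibility of $\psi$ reduces to pointwise invertibility of $j_T^*\psi$.

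The next step is to identify the source and target of $j_T^*\psi_X$ explicitly. The target is immediate: $j_T^*\nu_T F\iso\nu_\Aa T$ via $\phi F$. For the source, density of $\Aa$ in $\Ee$ writes $\nu_\Aa(X)$ as the colimit of its Yoneda-cocone over $\Aa/X$ with components the representables $\Aa(-,A)$; applying the colimit-preserving functors $(j_T)_!$ and then $j_T^*$, and using $(j_T)_!\,\Aa(-,A)=\Theta_T(-,j_TA)$ together with the standard chain
$$\Theta_T(j_TB,j_TA)\;=\;\Ee^T(Fi_\Aa B,Fi_\Aa A)\;\iso\;\Ee(i_\Aa B,Ti_\Aa A)\;=\;\nu_\Aa T(i_\Aa A)(B),$$
one arrives at
$$j_T^*(j_T)_!\nu_\Aa(X)\;\iso\;\colim_{A\to X\in\Aa/X}\nu_\Aa T(i_\Aa A).$$

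The key step is then to verify, by tracing the units and counits in the formula for $\psi$, that under these identifications $j_T^*\psi_X$ coincides with the canonical comparison map
$$\colim_{A\to X\in\Aa/X}\nu_\Aa T(i_\Aa A)\longrightarrow\nu_\Aa T(X)$$
induced by applying $\nu_\Aa T$ to the $\Aa$-cocone over $X$. Granted this, pointwise invertibility of $j_T^*\psi$ is exactly the statement that each such comparison is an isomorphism, i.e.\ the arity condition of Definition~\ref{monadwitharities}.

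I expect this last coherence check to be the only delicate point: it requires a careful diagram chase through the unit of $F\ladj U$ and the counit of $(j_T)_!\ladj j_T^*$ to see that the mate, once restricted along $j_T$ and rewritten as a colimit, agrees component-by-component with $\nu_\Aa T$ applied to the $\Aa$-cocone. Everything else rests on standard facts—density, cocontinuity of Kan extension and of restriction along $j_T$, and conservativity of $j_T^*$.
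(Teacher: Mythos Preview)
Your argument is correct and follows essentially the same route as the paper's: both reduce exactness to invertibility of $j_T^*\psi$ via conservativity of $j_T^*$ (the paper invokes monadicity of $j_T^*$, you note that $j_T$ is bijective on objects), then identify the source of $j_T^*\psi_X$ with the colimit of $\nu_\Aa T$ over the $\Aa$-diagram of $X$ and the map itself with the canonical comparison cocone. The coherence check you flag as delicate is treated with the same brevity in the paper.
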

\begin{proof}Notice first that the left adjoint $(j_T)_!$ of $j^*_T$ is given by left Kan extension along $j_T$. In particular, $(j_T)_!$ takes the representable presheaves to representable presheaves so that we have a canonical isomorphism $(j_T)_!\nu_\Aa i_\Aa\cong\nu_Ti_Tj_T=\nu_TFi_\Aa$.

Consider an object $X$ of $\Ee$ equipped with its $\Aa$-diagram $a_X:\Aa/X\to\Ee$ whose colimit is $X$. Since $a_X$ takes values in $\Aa$, the functors $(j_T)_!\nu_\Aa a_X$ and $\nu_TF a_X$ are canonically isomorphic; since the right square is an adjoint square, application of $j_T^*$ induces thus a canonical isomorphism between $j_T^*(j_T)_!\nu_\Aa a_X$ and $\nu_\Aa UF a_X$. 

On the other hand, we know that $\nu_\Aa a_X$ induces the Yoneda-cocone for $\nu_\Aa(X)$ by density of $\Aa$. Therefore, $j_T^*(j_T)_!\nu_\Aa a_X$ induces a colimit-cocone for $j_T^*(j_T)_!(X)$. Since $j_T^*$ is monadic, the right square is an exact adjoint square precisely when $j_T^*(j_T)_!\nu_\Aa(X)$ is canonically isomorphic to $\nu_\Aa UF(X)$ for each object $X$ of $\Ee$. Using the isomorphism above, as well as the isomorphism between $j_T^*(j_T)_!\nu_\Aa a_X$ and $\nu_\Aa UF a_X$, this is the case if and only if $T=UF$ is a monad with arities $\Aa$.\end{proof}

As explained in the introduction, the following theorem has been formulated and proved at different levels of generality by Leinster \cite{L} and the three authors of this article, cf. \cite[1.12/17]{Be}, \cite[4.10]{W}, \cite{M}.

\begin{thm}[Nerve Theorem]\label{T2}Let $\Ee$ be a category with dense generator $\Aa$. For any monad~$T$ with arities $\Aa$, the full subcategory~$\Theta_T$ spanned by the free $T$-algebras on the arities is a dense generator of the Eilenberg-Moore category~$\Ee^T$.

The essential image of the nerve functor $\nu_T:\Ee^T\to\PSh{\Theta_T}$ is spanned by those presheaves whose restriction along $j_T$ belongs to the essential image of $\nu_\Aa:\Ee\to\PSh\Aa$.\end{thm}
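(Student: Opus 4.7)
The plan is to apply Proposition \ref{T1} to the right-hand square of diagram (\ref{nerve}). As noted immediately before Proposition \ref{P3}, this square is an adjoint square whose right adjoints $U$ and $j_T^*$ are both monadic, and the horizontal functor $\nu_\Aa$ is fully faithful by density of $\Aa$ in $\Ee$. Because $T$ is a monad with arities $\Aa$, Proposition \ref{P3} tells us precisely that this adjoint square is, in addition, exact, so all hypotheses of Proposition \ref{T1} are met.

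Taking $\Phi = \nu_\Aa$ and $\Psi = \nu_T$, Proposition \ref{T1} yields two conclusions simultaneously: first, $\nu_T : \Ee^T \to \PSh{\Theta_T}$ is fully faithful, and second, $\EssIm(\nu_T)$ is obtained from $\EssIm(\nu_\Aa)$ by pullback along the restriction functor $j_T^*$. For the first statement of the theorem, it remains only to observe that $\Theta_T$ is small: by construction $j_T : \Aa \to \Theta_T$ is bijective on objects, and $\Aa$ is small. Combined with $\nu_T$ being fully faithful, this matches Definition \ref{densegenerator} and shows that $\Theta_T$ is a dense generator of $\Ee^T$.

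The second statement is obtained by unwinding the meaning of ``pullback of a replete full subcategory along $j_T^*$'': a presheaf $P$ on $\Theta_T$ belongs to this pullback exactly when $j_T^*(P) \in \EssIm(\nu_\Aa)$, which is the characterisation appearing in the theorem. Since all substantive content has been absorbed into Propositions \ref{P1}, \ref{T1}, and \ref{P3}, there is essentially no calculation left; the only things to be careful about are matching the hypotheses of Proposition \ref{T1} to the concrete data of diagram (\ref{nerve}) and correctly translating the abstract pullback of essential images into the elementary condition stated in the theorem. I expect no genuine obstacle, so the proof should be short.
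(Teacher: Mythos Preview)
Your proposal is correct and follows exactly the paper's own approach: the paper's proof is the single line ``This follows from Propositions \ref{T1} and \ref{P3},'' and you have simply spelled out how those two propositions combine (checking monadicity of $U$ and $j_T^*$, full faithfulness of $\nu_\Aa$, exactness via \ref{P3}, and then reading off density and the essential-image description from \ref{T1}). The only addition you make beyond the paper is the explicit remark that $\Theta_T$ is small, which is indeed needed for the definition of dense generator and is harmless.
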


\begin{proof}This follows from Propositions \ref{T1} and \ref{P3}.\end{proof}

\begin{rmk}The density of $\Theta_T$ in $\Ee^T$ holds for a larger class of monads than just the monads with arities $\Aa$. Indeed, a careful look at the preceding proof shows that, in virtue of Proposition \ref{P1}b, density follows already if the natural transformation $\rho$, responsible for diagram (\ref{adjoint2}), is a pointwise epimorphism. This property in turn amounts precisely to the hypothesis of Theorem 1.3 of Day \cite{D}, in which the density of $\Theta_T$ in $\Ee^T$ is established by other methods (namely, reducing it to the density of the Kleisli category $\Ee_T$ in the Eilenberg-Moore category $\Ee^T$, cf. \cite[1.2]{D}). 

On the other hand, if we assume the density of~$\Theta_T$, the description (in \ref{T2}) of the essential image of the nerve functor $\nu_T:\Ee^T\to\widehat{\Theta_T}$ is equivalent to $T$ being a monad with arities $\Aa$, as follows from Proposition \ref{P3}. This conditional characterisation of monads with arities $\Aa$ is closely related to Theorem 5.1 of Diers \cite{Di}.\end{rmk}

\subsection{Accessible and locally presentable categories}\label{section:accessible}--\vspace{1ex}

Recall \cite{MP,AR} that a category $\Ee$ is called $\alpha$-\emph{accessible} for a regular cardinal $\alpha$ if $\Ee$ has $\alpha$-filtered colimits and comes equipped with a dense generator $\Aa$ such that\begin{itemize}\item[(i)]the objects of~$\Aa$ are $\alpha$-presentable;\item[(ii)]for each object $X$ of~$\Ee$, the category $\Aa/X$ is $\alpha$-filtered.\end{itemize}In particular, each object of an $\alpha$-accessible category $\Ee$ is a canonical $\alpha$-filtered colimit of $\alpha$-presentable objects. A \emph{cocomplete} $\alpha$-accessible category is called \emph{locally $\alpha$-presentable} \cite{GU}. A cocomplete category $\Ee$ is locally $\alpha$-presentable if and only if $\Ee$ has a \emph{strong generator} of $\alpha$-presentable objects, cf. \cite[7.1]{GU}. This is so since $\alpha$-cocompletion of such a strong generator yields a dense generator satisfying conditions (i) and (ii) above, cf. \cite[7.4]{GU}.

Any $\alpha$-accessible category $\Ee$ has a canonical dense generator, namely $\Aa$ can be chosen to be a skeleton $\Ee(\alpha)$ of the full subcategory spanned by all $\alpha$-presentable objects of $\Ee$. In particular, the latter is essentially small. Since $\Ee(\alpha)$ consists of $\alpha$-presentable objects, the nerve functor $\nu_{\Ee(\alpha)}:\Ee\to\PSh{\Ee(\alpha)}$ preserves $\alpha$-filtered colimits. In particular, any monad $T$ which preserves $\alpha$-filtered colimits is a monad with arities $\Ee(\alpha)$. Moreover, the essential image of $\nu_{\Ee(\alpha)}$ is spanned by the \emph{$\alpha$-flat presheaves} on $\Ee(\alpha)$, i.e. those presheaves whose category of elements is $\alpha$-filtered.

The following theorem has been proved by Gabriel-Ulmer \cite[10.3]{GU} under the additional hypothesis that $\Ee$ is cocomplete. Ad\'amek-Rosick\'y \cite[2.78]{AR} obtain the first half of the theorem by quite different methods. Our proof shows that the degree of accessibility is preserved under passage to the Eilenberg-Moore category.

\begin{thm}[Gabriel-Ulmer, Ad\'amek-Rosick\'y]\label{T3}For any $\alpha$-filtered colimit preserving monad~$T$ on an $\alpha$-accessible category~$\Ee$, the Eilenberg-Moore category~$\Ee^T$ is $\alpha$-accessible. The dense generator~$\Theta_T$ of $\Ee^T$ is spanned by the free $T$-algebras on (a skeleton of) the $\alpha$-presentable objects. Moreover, $\Ee^T$ is equivalent to the full subcategory of $\PSh{\Theta_T}$ spanned by those presheaves whose restriction along $j_T$ is $\alpha$-flat.\end{thm}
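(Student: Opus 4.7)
The strategy is to apply the Nerve Theorem \ref{T2} with $\Aa=\Ee(\alpha)$, the canonical dense generator given by a skeleton of the $\alpha$-presentable objects of $\Ee$. First, I would verify that $T$ is a monad with arities $\Ee(\alpha)$: since the objects of $\Ee(\alpha)$ are $\alpha$-presentable, the nerve $\nu_{\Ee(\alpha)}$ preserves $\alpha$-filtered colimits; by hypothesis so does $T$; and the $\Ee(\alpha)$-cocones in $\Ee$ are $\alpha$-filtered colimit cocones by $\alpha$-accessibility of $\Ee$. Composing, $\nu_{\Ee(\alpha)}T$ sends $\Ee(\alpha)$-cocones to colimit cocones in $\PSh{\Ee(\alpha)}$.

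From the Nerve Theorem I then immediately obtain two of the three assertions of Theorem \ref{T3}: $\Theta_T$ is dense in $\Ee^T$, and the essential image of $\nu_T$ consists precisely of those presheaves on $\Theta_T$ whose $j_T$-restriction is $\alpha$-flat (the essential image of $\nu_{\Ee(\alpha)}$ being, by $\alpha$-accessibility of $\Ee$, exactly the category of $\alpha$-flat presheaves on $\Ee(\alpha)$). To complete the proof of $\alpha$-accessibility of $\Ee^T$ with dense generator $\Theta_T$, three conditions remain to be checked. The easy ones are: (a) $\Ee^T$ has $\alpha$-filtered colimits, created from $\Ee$ by $U$, because $T$ preserves $\alpha$-filtered colimits; and (b) each free algebra $FA$ with $A\in\Ee(\alpha)$ is $\alpha$-presentable in $\Ee^T$, since the representable $\Ee^T(FA,-)\iso\Ee(A,U-)$ preserves $\alpha$-filtered colimits.

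The main technical point is (c): the slice $\Theta_T/(X,\xi)$ is $\alpha$-filtered for every $T$-algebra $(X,\xi)$. Via the adjunction, the objects of $\Theta_T/(X,\xi)$ correspond bijectively to those of the $\alpha$-filtered category $\Ee(\alpha)/UX$, but the morphisms are enriched, since $\Theta_T(FA,FA')\iso\Ee(A,TA')$ rather than $\Ee(A,A')$. Given an $\alpha$-small diagram $D:\mathcal{I}\to\Theta_T/(X,\xi)$, the plan is to lift it to an $\alpha$-small diagram in $\Ee(\alpha)/UX$ by factoring each connecting morphism $\tilde g:A\to TA'$ through some object $C\in\Ee(\alpha)$ --- which is possible because $A$ is $\alpha$-presentable and $TA'=\colim_{\Ee(\alpha)/TA'}$ is an $\alpha$-filtered colimit of $\alpha$-presentable objects --- then to produce a cocone in $\Ee(\alpha)/UX$ using its $\alpha$-filteredness, and transfer it back to $\Theta_T/(X,\xi)$ via $F$. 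I expect the main obstacle to be the coherent choice of factorisations across the $\alpha$-small family of morphisms and the bookkeeping required to ensure that the resulting cocone genuinely equalises parallel pairs in $\Theta_T/(X,\xi)$; this should be handled by the joint $\alpha$-filteredness of $\Ee(\alpha)/UX$ together with that of the various $\Ee(\alpha)/TA'$.
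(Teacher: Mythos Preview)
Your proposal is correct and follows the paper's proof exactly: verify that $T$ has arities $\Ee(\alpha)$, apply the Nerve Theorem for density of $\Theta_T$ and the $\alpha$-flat description of the essential image, then check that $\Ee^T$ has $\alpha$-filtered colimits and that $\Theta_T$ satisfies conditions (i) and (ii) of $\alpha$-accessibility. The paper is in fact terser than you on condition (ii), disposing of it with the single phrase ``condition (ii) follows from an adjunction argument''; your factorisation sketch is a reasonable way to unpack that phrase, and the bookkeeping you anticipate is exactly what is needed.
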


\begin{proof}Since $T$ is a monad with arities $\Ee(\alpha)$, Theorem \ref{T2} yields the density of $\Theta_T$ and the description of $\Ee^T$ as full subcategory of $\PSh{\Theta_T}$. Moreover, $\Ee^T$ has $\alpha$-filtered colimits, since $\Ee$ has and $T$ preserves them. The forgetful functor $U:\Ee^T\to\Ee$ also preserves $\alpha$-filtered colimits; therefore the left adjoint $F:\Ee\to\Ee^T$ takes $\alpha$-presentable objects to $\alpha$-presentable objects; this establishes condition (i) of $\alpha$-accessibility; condition (ii) follows from an adjunction argument.\end{proof}

\section{Monads with arities.}

\subsection{Categories with arities and arity-respecting functors}\label{section:arity-respecting}Monads with arities can be described as monads in a certain $2$-category (cf. \cite{St0}) which deserves some interest for itself. The objects of the $2$-category relevant to us are \emph{categories with arities}, i.e. pairs $(\Ee,\Aa)$ consisting of a category $\Ee$ with dense generator $\Aa$, cf. Section \ref{densegenerator}. The $1$-cells $(\Ee,\Aa)\to(\Ff,\Bb)$ are \emph{arity-respecting functors}, the $2$-cells are natural transformations.

Here, a functor $F:(\Ee,\Aa)\to(\Ff,\Bb)$ is called \emph{arity-respecting} if the composite functor $\nu_BF$ takes the $\Aa$-cocones in $\Ee$ to colimit-cocones in $\PSh\Bb$, cf. Definition \ref{monadwitharities}. Observe that $F$ is arity-respecting if and only if $\nu_\Bb F$ is canonically isomorphic to the left Kan extension along $i_A$ of its restriction $\nu_BFi_A$. The following lemma shows that we get indeed a $2$-category in this way. It is worthwhile to note that the natural transformations between two parallel arity-respecting functors form a \emph{set} since our dense generators are \emph{small} by definition.

\begin{lma}\label{L3}--
\begin{enumerate}\item[(a)]Identity functors are arity-respecting;\item[(b)] The composition of arity-respecting functors is arity-respecting;\item[(c)] A functor $F:(\Ee,\Aa)\to(\Ff,\Bb)$ respects arities if and only if for any left Kan extension $\overline{F}:\PSh\Aa\to\PSh\Bb$ of~$\nu_{\Bb}Fi_{\Aa}$ along the Yoneda embedding $y_{\Aa}:\Aa\to\PSh\Aa$ there is an invertible $2$-cell $\phi:\nu_{\Bb}F\cong\overline{F}\nu_{\Aa}$.\end{enumerate}\end{lma}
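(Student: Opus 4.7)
The plan is to establish (c) first, since (a) and (b) then drop out of it almost for free. For (c) I would unfold what the left Kan extension $\overline F$ along $y_\Aa$ does on the dense subcategory and then invoke its cocontinuity. Full faithfulness of $i_\Aa$ gives $\nu_\Aa i_\Aa\cong y_\Aa$, hence $\overline F y_\Aa\cong\nu_\Bb F i_\Aa$. The central input is the canonical isomorphism $\Aa/\nu_\Aa(X)\cong\Aa/X$: by the Yoneda lemma, maps $y_\Aa(A)\to\nu_\Aa(X)$ in $\PSh\Aa$ correspond bijectively to elements of $\nu_\Aa(X)(A)=\Ee(i_\Aa(A),X)$. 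Combined with cocontinuity of $\overline F$ and the fact that each presheaf is the canonical colimit of its Yoneda cocone, this yields
\[
\overline F\nu_\Aa(X)\;\cong\;\colim_{(A\to X)\in\Aa/X}\nu_\Bb F(A),
\]
and the right-hand side is precisely the vertex of the cocone in $\PSh\Bb$ obtained by applying $\nu_\Bb F$ to the $\Aa$-cocone over $X$. Thus an invertible $2$-cell $\phi:\nu_\Bb F\cong\overline F\nu_\Aa$ exists, naturally in $X$, if and only if $\nu_\Bb F$ carries every $\Aa$-cocone to a colimit cocone, which is the arity-respecting condition.

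Once (c) is in hand, (a) is immediate: for $F=\id_\Ee$ we have $\nu_\Aa i_\Aa\cong y_\Aa$, and the left Kan extension of $y_\Aa$ along itself is $\id_{\PSh\Aa}$, so the required isomorphism is tautological. For (b), suppose $F:(\Ee,\Aa)\to(\Ff,\Bb)$ and $G:(\Ff,\Bb)\to(\mathcal{G},\mathcal{C})$ both respect arities. Applying (c) twice and pasting gives $\nu_{\mathcal{C}}GF\cong\overline G\overline F\nu_\Aa$. The composite $\overline G\overline F$ is cocontinuous (as a composition of cocontinuous functors), and on representables it satisfies $\overline G\overline F y_\Aa\cong\overline G(\nu_\Bb F i_\Aa)\cong\nu_{\mathcal{C}}GF i_\Aa$; hence $\overline G\overline F$ is, up to canonical isomorphism, the left Kan extension of $\nu_{\mathcal{C}}GF i_\Aa$ along $y_\Aa$. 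Applying (c) in the reverse direction shows that $GF$ is arity-respecting.

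The only step that demands any care is the identification $\Aa/\nu_\Aa(X)\cong\Aa/X$ used in proving (c); once this is set up, the rest is a routine manipulation of cocontinuous functors and Yoneda cocones, and parts (a) and (b) become essentially transport through the equivalence that (c) provides.
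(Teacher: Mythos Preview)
Your proof is correct and follows essentially the same strategy as the paper's. The only differences are presentational: the paper proves (a) directly from the definition of density and then derives (b) from (c), whereas you establish (c) first and deduce both (a) and (b) from it; and in (c) the paper phrases the key computation as ``$\overline{F}$ has a right adjoint and thus preserves the left Kan extension $\nu_\Aa\cong\mathrm{Lan}_{i_\Aa}(y_\Aa)$'', while you unfold the same fact pointwise via the identification $\Aa/\nu_\Aa(X)\cong\Aa/X$ and cocontinuity of $\overline{F}$. These are equivalent packagings of the same argument.
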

\begin{proof}--

(a) This is just a reformulation of the density of the generator;

(b) Let $F:(\Ee_1,\Aa_1)\to(\Ee_2,\Aa_2)$ and $G:(\Ee_2,\Aa_2)\to(\Ee_3,\Aa_3)$ be arity-respecting functors and define $\overline{F}:\PSh\Aa_1\to\PSh\Aa_2$ and $\overline{G}:\PSh\Aa_2\to\PSh\Aa_3$ as left Kan extensions (along the Yoneda embedding) of $\nu_{\Aa_2}Fi_{\Aa_1}$ and $\nu_{\Aa_3}Gi_{\Aa_2}$ respectively. Assuming (c), this yields the following pseudo-commutative diagram of functors\begin{diagram}[small]\Ee_1&\rTo^{\nu_{\Aa_1}}&\PSh\Aa_1\\\dTo^F&\overset{\phi}{\cong}&\dTo_{\overline{F}}\\\Ee_2&\rTo^{\nu_{\Aa_2}}&\PSh\Aa_2\\\dTo^G&\overset{\psi}{\cong}&\dTo_{\overline{G}}\\\Ee_3&\rTo^{\nu_{\Aa_3}}&\PSh\Aa_3\end{diagram}in which the $2$-cells $\phi$ and $\psi$ are invertible. The functor $\overline{G}$ has a right adjoint by construction so that post-composition with $\overline{G}$ preserves left Kan extensions; in particular, $\overline{G}\circ\overline{F}$ is a left Kan extension of $\nu_{\Aa_3}GFi_{\Aa_1}$ along $y_{\Aa_1}$. Since gluing of $\phi$ and $\psi$ along $\nu_{\Aa_2}$ yields an invertible $2$-cell between $\overline{G}\overline{F}\nu_{\Aa_1}$ and $\nu_{\Aa_3}GF$, another application of (c) implies that $GF$ respects arities as required.

(c) By construction, $\overline{F}$ has a right adjoint and thus preserves left Kan extensions. Therefore (by density of $\Aa$) the left Kan extension of $\overline{F}y_\Aa$ along $i_\Aa$ is $2$-isomorphic to $\overline{F}\nu_\Aa$. Assume first that the left Kan extension of $\nu_\Bb Fi_\Aa$ along $i_\Aa$ is $2$-isomorphic to $\nu_\Bb F$. Since by definition $\nu_\Bb Fi_\Aa=\overline{F}\nu_\Aa i_\Aa=\overline{F}y_\Aa$, this implies that the left Kan extension of $\overline{F}y_\Aa$ along $i_\Aa$ is $2$-isomorphic to $\nu_\Bb F$, whence the required $2$-isomorphism $\phi:\nu_\Bb F\cong\overline{F}\nu_\Aa$. Conversely, if such an invertible $2$-cell exists, the left Kan extension of $\nu_\Bb Fi_\Aa$ along $i_\Aa$ is canonically $2$-isomorphic to $\nu_\Bb F$.\end{proof}

\begin{prp}\label{prp:EM-object}A monad~$T$ on a category~$\Ee$ with dense generator~$\Aa$ has arities $\Aa$ if and only if~$T$ is an arity-respecting endofunctor of~$(\Ee,\Aa)$. If this is the case, the pair $(\Ee^T,\Theta_T)$ is an Eilenberg-Moore object of~$T$ in the $2$-category of categories with arities, arity-respecting functors and natural transformations.\end{prp}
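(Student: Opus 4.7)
My plan is as follows. The first equivalence is immediate on inspection: both Definition \ref{monadwitharities} and the definition of arity-respecting functor opening Section \ref{section:arity-respecting} say precisely that $\nu_\Aa T$ sends $\Aa$-cocones in $\Ee$ to colimit-cocones in $\PSh\Aa$, so nothing is to be argued. The remainder of the proof will exhibit $(\Ee^T,\Theta_T)$ as the Eilenberg-Moore object of $T$ in the 2-category of categories with arities, in the universal-property sense of Street \cite{St0}.

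To set up the structural data, I would first observe that $(\Ee^T,\Theta_T)$ is an object of the 2-category by the nerve theorem (Theorem \ref{T2}). Next I would show that the forgetful functor $U:\Ee^T\to\Ee$ is arity-respecting, using the identification $\nu_\Aa U\cong j_T^*\nu_T$ recorded just before Proposition \ref{P3}: density of $\Theta_T$ in $\Ee^T$ sends each $\Theta_T$-cocone under $\nu_T$ to the Yoneda-cocone on the corresponding presheaf, which is a colimit-cocone in $\PSh{\Theta_T}$, and $j_T^*$ preserves this colimit since it is itself a left adjoint (to $(j_T)_*$). An appeal to Lemma \ref{L3}(b) then yields that $TU$ is arity-respecting, so the algebra action $\alpha:TU\Rightarrow U$ is a legitimate 2-cell of the 2-category, and the monad-algebra axioms transfer from $\mathrm{CAT}$ for free.

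For the universal property, I would fix a test object $(\mathcal{C},\mathcal{D})$ and recall that, since $\Ee^T$ is already the Eilenberg-Moore object of $T$ in $\mathrm{CAT}$, there is a canonical isomorphism between the category of functors $\mathcal{C}\to\Ee^T$ and the Eilenberg-Moore category of the induced monad on the category of functors $\mathcal{C}\to\Ee$. What needs checking is that this isomorphism restricts to the respective subcategories of arity-respecting data, i.e.\ that a lifting $\bar V:\mathcal{C}\to\Ee^T$ is arity-respecting if and only if $V=U\bar V$ is. The forward implication is Lemma \ref{L3}(b). For the converse, under the hypothesis that $V$ is arity-respecting, so that $\nu_\Aa V=j_T^*\nu_T\bar V$ sends $\mathcal{D}$-cocones to colimit-cocones in $\PSh\Aa$, I would like to conclude the same for $\nu_T\bar V$ in $\PSh{\Theta_T}$.

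The main (and essentially only) obstacle is to justify that last step, namely the claim that $j_T^*:\PSh{\Theta_T}\to\PSh\Aa$ \emph{reflects} colimits. I expect this to drop out easily from the construction of $\Theta_T$: the functor $j_T:\Aa\to\Theta_T$ is bijective on objects, and colimits in presheaf categories are computed pointwise in $\Sets$, so a cocone in $\PSh{\Theta_T}$ is a colimit-cocone iff its evaluation at each $C\in\Theta_T$ is, iff (by surjectivity of $j_T$ on objects) the evaluation at each $A\in\Aa$ of $j_T^*$ applied to the cocone is a colimit-cocone in $\PSh\Aa$. Once this reflection is in hand, the 2-cell half of the universal property is automatic, since 2-cells in the 2-category of categories with arities are mere natural transformations, and the bijection between algebra morphisms and natural transformations of liftings is already supplied by the $\mathrm{CAT}$-level EM-object property of $\Ee^T$.
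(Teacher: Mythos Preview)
Your argument is correct and follows the same overall strategy as the paper: reduce to the Eilenberg--Moore object of $T$ in $\mathrm{CAT}$ and then check that the relevant structure lives in the sub-$2$-category of categories with arities. The paper's proof is considerably terser: it simply asserts that ``it remains to be shown that the free and forgetful functors respect arities'' and dispatches this via Proposition~\ref{P3} and Lemma~\ref{L3}(c), using that $(j_T)_!$ and $j_T^*$ are the left Kan extensions of $\nu_TFi_\Aa$ and $\nu_\Aa Ui_T$ along the respective Yoneda embeddings.

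What you add, and what the paper leaves implicit, is the verification of the universal property itself: namely, that a lifting $\bar V:\mathcal{C}\to\Ee^T$ is arity-respecting precisely when $V=U\bar V$ is. Your observation that $j_T^*$ \emph{reflects} colimits (because $j_T$ is bijective on objects and presheaf colimits are pointwise) is exactly the missing ingredient, and it is correct. The paper's phrase ``it remains to be shown\ldots'' presumably relies on the reader supplying this reflection step; you have made it explicit. Note also that you only need $U$ to be arity-respecting for the Eilenberg--Moore universal property; the paper additionally records that $F$ is arity-respecting, but this is not required for the argument you give.
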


\begin{proof}By definition, the monad $T$ has arities $\Aa$ if and only if $T$ is arity-respecting. Theorem \ref{T2} implies that $(\Ee^T,\Theta_T)$ is a category with arities. Since~$\Ee^T$ is an Eilenberg-Moore object of~$T$ in the ordinary $2$-category of categories, functors and natural transformations, it remains to be shown that the free and forgetful functors respect arities. This follows from \ref{P3} and \ref{L3}c using that $(j_T)_!$ (resp. $j_T^*$) is the left Kan extension of $\nu_TFi_\Aa$ (resp. $\nu_AUi_T$) along $y_\Aa$ (resp. $y_T$).\end{proof}

\subsection{Factorisation categories}
Let $(\Ee,\Aa)$ and $(\Ff,\Bb)$ be categories with arities and let $F:\Ee\to\Ff$ be a functor. For an elementary formulation of what it means for $F$ to respect arities, we introduce the following factorisation categories. For any morphism $f:B \to FX$ (where $B$ is an object of $\Bb$ and $X$ is an object of $\Ee$) the factorisation category $\Fact{\Aa}{F}{f}$ is defined as follows:

An object is a triple $(g,A,h)$ as in $\xygraph{{B}="l" [r] {FA}="m" [r] {FX}="r" "l":"m"^-{g}:"r"^-{Fh}}$ such that the composite is $f$ and $A$ is an object of $\Aa$. A morphism $(g_1,A_1,h_1) \to (g_2,A_2,h_2)$ consists of a morphism $k:A_1 \to A_2$ in $\Aa$ such that $F(k)g_1=g_2$ and $h_2k=h_1$. Later on we shall also use the notation $\Fact{\Ee}{F}{f}$ if no restriction is made on the object $A$.

\begin{prp}\label{prp:monad-with-arities-elementary}
The functor $F:(\Ee,\Aa)\to(\Ff,\Bb)$ respects arities if and only if, for all $f:B \to FX$ as above, the factorisation category $\Fact{\Aa}{F}{f}$ is connected.\end{prp}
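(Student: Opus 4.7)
My plan is to apply Lemma \ref{L3}(c), which recasts the arity-respecting condition as the assertion that the canonical comparison $\overline{F}\nu_\Aa\Rightarrow\nu_\Bb F$ is invertible, where $\overline{F}$ denotes the left Kan extension of $\nu_\Bb F i_\Aa$ along the Yoneda embedding $y_\Aa$. Invertibility of this $2$-cell is checked object by object in $\Ee$ and pointwise in $\PSh\Bb$, so I would fix $X\in\Ee$ and a test arity $B\in\Bb$ and analyse the map $\overline{F}\nu_\Aa(X)(B)\to\nu_\Bb(FX)(B)$ of sets, over each element $f:B\to FX$.

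Next I would evaluate both sides explicitly. By density of $\Aa$ the presheaf $\nu_\Aa(X)$ is the canonical colimit of representables indexed by $\Aa/X$; since $\overline{F}$ is a left adjoint it preserves this colimit, so
$$\overline{F}\nu_\Aa(X)(B)\;\cong\;\colim_{(h:A\to X)\in\Aa/X}\;\Ff(B,FA),$$
while $\nu_\Bb(FX)(B)=\Ff(B,FX)$. Tracing through the Kan extension formula, the comparison map sends the class of a pair $(h:A\to X,\,g:B\to FA)$ to the composite $Fh\circ g$.

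The key identification to make is between the fibre of this comparison map over a given $f$ and the set $\pi_0(\Fact{\Aa}{F}{f})$ of connected components. An element of the domain sitting above $f$ is represented by a pair $(h,g)$ with $Fh\circ g=f$, i.e.\ exactly by an object $(g,A,h)$ of $\Fact{\Aa}{F}{f}$. The generating relation defining the colimit of sets identifies $(h_1,g_1)$ with $(h_2,g_2)$ whenever there exists $k:A_1\to A_2$ in $\Aa$ with $h_2k=h_1$ and $Fk\circ g_1=g_2$; such a $k$ automatically preserves the composite $f$, and it is precisely a morphism $(g_1,A_1,h_1)\to(g_2,A_2,h_2)$ of $\Fact{\Aa}{F}{f}$. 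Hence the fibre over $f$ is $\pi_0(\Fact{\Aa}{F}{f})$, and the comparison map is a bijection at $f$ if and only if this set is a singleton, i.e.\ if and only if $\Fact{\Aa}{F}{f}$ is non-empty and connected. Letting $f$, $B$ and $X$ vary yields the proposition.

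The only real work is the bookkeeping in the third step — verifying that the generating relation on the colimit of sets, restricted to the pre-image of a fixed $f$, coincides with the morphism relation of $\Fact{\Aa}{F}{f}$ — and this is immediate once one notes that a morphism in $\Aa/X$ automatically preserves the composite $Fh\circ g$. I do not anticipate any substantive obstacle beyond this unfolding; everything rests on the cocontinuity of $\overline{F}$ and on the explicit description of set-valued colimits.
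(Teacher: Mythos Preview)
Your proposal is correct and follows essentially the same route as the paper: both identify the comparison map $\colim_{h:A\to X}\Ff(B,FA)\to\Ff(B,FX)$ and observe that its fibre over $f$ is $\pi_0(\Fact{\Aa}{F}{f})$. The only difference is that you route through Lemma~\ref{L3}(c), whereas the paper works directly from the definition of arity-respecting (that $\nu_\Bb F$ preserves the $\Aa$-cocones), which makes the detour through $\overline{F}$ unnecessary.
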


\begin{proof}
By definition, $F$ respects arities if and only if $\nu_{\Bb}F$ preserves the $\Aa$-cocones for all objects $X$ of $\Ee$. Since the evaluation functors $\tn{ev}_B:\PSh\Bb\to\Sets$ (for $B$ running through the objects of $\Bb$) collectively preserve and reflect colimits, this is in turn equivalent to saying that the functions
\begin{equation}\label{eq:colimit-cocone-monad-with-arity-charn}
\ca E(B,Fh) : \ca E(B,FA) \to \ca E(B,FX)
\end{equation}
varying over $h:A \to X$ in $\Aa/X$ form a colimit-cocone. Because a colimit of a set-valued functor is computed as the set of connected components of the category of its elements, it follows that the fibre of the induced function
\begin{equation}\label{eq:induced-map-monad-with-arity-charn}
\colim\limits_{h:A \to X} \ca E(B,FA) \to \ca E(B,FX)
\end{equation}
over $f:B \to FX$ is given by the connected components of the category $\Fact{\ca A}{F}{f}$. To say that (\ref{eq:colimit-cocone-monad-with-arity-charn}) is a colimit is to say that for all $f$ (\ref{eq:induced-map-monad-with-arity-charn}) is a bijection, which is equivalent to saying that these fibres are singletons.\end{proof}

\subsection{Local right adjoints and generic factorisations}\label{dfn:generic}

A functor $R:\Ee\to\Ff$ is said to be a \emph{local right adjoint} if for each object $X$ of~$\Ee$ the induced functor$$R_X : \Ee/X \to \Ff/RX\quad\quad f \mapsto R(f)$$admits a left adjoint functor $L_X:\Ff/RX\to\Ee/X$. If $\Ee$ has a terminal object $1$ it suffices to require that $R_1$ has a left adjoint $L_1$, cf. Lemma \ref{lma:lra} below.

In \cite{W0,W}, the terminology \emph{parametric} right adjoint was used instead, following Street \cite{St}, but since then local right adjoint has become the more accepted terminology. A functor between presheaf categories is local right adjoint if and only if it \emph{preserves connected limits} if and only if it is \emph{familially representable}, cf. \cite[C.3.2]{L0}.


A morphism $g:B \to RA$ is said to be \emph{$R$-generic} whenever, given $\alpha$, $\beta$ and $\gamma$ making
\[ \xygraph{!{0;(1.5,0):(0,.667)::} {B}="tl" [r] {RA'}="tr" [d] {RX}="br" [l] {RA}="bl" "tl":"tr"^-{\alpha}:"br"^-{R\gamma}:@{<-}"bl"^-{R\beta}:@{<-}"tl"^-{g}} \]
commute, there is a unique $\delta:A \to A'$ such that $R(\delta)g=\alpha$ and $\beta=\gamma\delta$.

The following lemma is a reformulation of \cite[2.6]{W} and \cite[5.9]{W0}.

\begin{lma}\label{lma:lra}Assume that $\Ee$ has a terminal object $1$. For a functor $R:\Ee\to\Ff$ the following conditions are equivalent:\begin{itemize}\item[(i)]$R$ is a local right adjoint;\item[(ii)]for each $f:B\to RX$, the category $\Fact{\Ee}{R}{f}$ has an initial object;\item[(iii)]for each $\tilde{f}:B\to R1$, the category $\Fact{\Ee}{R}{\tilde{f}}$ has an initial object;\item[(iv)]the functor $R_1:\Ee\to\Ff/R1$ has a left adjoint $L_1:\Ff/R1\to\Ee$;\item[(v)]each $f:B\to RX$ factors as $\xygraph{{B}="l" [r] {RA}="m" [r] {RX}="r" "l":"m"^-{g}:"r"^-{Rh}}$ where $g$ is $R$-generic.\end{itemize}\end{lma}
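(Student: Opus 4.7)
The plan is to split the equivalence into four routine parts and two substantive ones. The routine parts are (i)$\Leftrightarrow$(ii), (ii)$\Rightarrow$(iii), (iii)$\Leftrightarrow$(iv), and (v)$\Rightarrow$(ii); the substantive reductions are (iii)$\Rightarrow$(ii), which recovers all the local adjunctions from the single one indexed by the terminal object $1$, and (ii)$\Rightarrow$(v), which promotes the ``fixed-codomain'' initiality in $\Fact{\Ee}{R}{f}$ to the ``floating-codomain'' universal property characterising an $R$-generic map.

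I would first handle the routine parts by unwinding definitions. A left adjoint $L_X$ to $R_X$ amounts to a choice, for each $(B,f)\in\Ff/RX$, of a universal arrow $\eta_{(B,f)}:(B,f)\to R_X L_X(B,f)$; writing $L_X(B,f)=(A,h)$, the universal arrow is a map $g:B\to RA$ with $Rh\circ g=f$, and its universal property translates verbatim into initiality of $(g,A,h)$ in $\Fact{\Ee}{R}{f}$, giving (i)$\Leftrightarrow$(ii). Restriction to $X=1$ yields (ii)$\Rightarrow$(iii), while the canonical equivalence $\Ee\simeq\Ee/1$ identifies $R_1:\Ee\to\Ff/R1$ with the functor of (iv), making (iii)$\Leftrightarrow$(iv) automatic. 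The implication (v)$\Rightarrow$(ii) is the $R$-generic property of $g$ applied with $Y=X$ and $\beta=h$.

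For the first substantive step (iii)$\Rightarrow$(ii), given $f:B\to RX$ I would set $\tilde f=R!_X\circ f:B\to R1$ and let $(g_0,A_0,!_{A_0})$ be the initial object of $\Fact{\Ee}{R}{\tilde f}$ provided by (iii). The triple $(f,X,!_X)$ is itself an object of $\Fact{\Ee}{R}{\tilde f}$, so initiality supplies a unique $h_0:A_0\to X$ with $Rh_0\circ g_0=f$. I would then check that $(g_0,A_0,h_0)$ is initial in $\Fact{\Ee}{R}{f}$: for any competitor $(g_1,A_1,h_1)$ the triple $(g_1,A_1,!_X\circ h_1)$ lies in $\Fact{\Ee}{R}{\tilde f}$, producing a unique $\delta:A_0\to A_1$ with $R\delta\circ g_0=g_1$; the required identity $h_1\circ\delta=h_0$ then follows by reapplying uniqueness in $\Fact{\Ee}{R}{\tilde f}$ with target $(f,X,!_X)$, since both $h_0$ and $h_1\delta$ are maps $A_0\to X$ whose $R$-image composed with $g_0$ equals $f$.

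The last step (ii)$\Rightarrow$(v) is where the main obstacle lies, because initiality of $(g,A,h)$ in $\Fact{\Ee}{R}{f}$ directly controls only competitors with codomain $X$, while $R$-genericity demands universality against competitors landing in any $Y$. Given a square with $\alpha:B\to RA'$, $\beta:A\to Y$, $\gamma:A'\to Y$ and $R\gamma\circ\alpha=R\beta\circ g$, I would invoke (ii) a second time for the morphism $R\beta g:B\to RY$ to obtain an initial object $(g',A'',h')$ of $\Fact{\Ee}{R}{R\beta g}$. Initiality there applied to $(g,A,\beta)$ and $(\alpha,A',\gamma)$ yields $\sigma:A''\to A$ with $R\sigma g'=g$, $\beta\sigma=h'$, and $\tau:A''\to A'$ with $R\tau g'=\alpha$, $\gamma\tau=h'$. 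Initiality of $(g,A,h)$ in $\Fact{\Ee}{R}{f}$ applied to $(g',A'',h\sigma)$ produces an auxiliary $\rho:A\to A''$ with $R\rho g=g'$ and $h\sigma\rho=h$. The uniqueness clauses of the two initialities then force $\sigma\rho=\id_A$ and $\rho\sigma=\id_{A''}$, so $\sigma$ is invertible and $\delta:=\tau\sigma^{-1}$ is the required comparison, its uniqueness inherited from that of $\tau$.
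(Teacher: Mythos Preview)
Your argument is correct. The decomposition differs from the paper's only in how you close the cycle: the paper proves (iii)/(iv)$\Rightarrow$(v) in a single step, whereas you split this into (iii)$\Rightarrow$(ii) followed by (ii)$\Rightarrow$(v).

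The paper's single step exploits the terminal object more aggressively. Working in $\Fact{\Ee}{R}{\tilde f}$ with $\tilde f:B\to R1$, every object has second component the unique map to $1$, so the ``second triangle'' condition on morphisms is vacuous. Initiality of $(g,A,t_A)$ there thus says: for \emph{any} $\alpha:B\to RA'$ with $R(t_{A'})\alpha=\tilde f$ there is a unique $\delta:A\to A'$ with $R(\delta)g=\alpha$. Given genericity data $(\alpha,\beta,\gamma)$ over an arbitrary $Y$, one checks $R(t_{A'})\alpha=\tilde f$, obtains $\delta$, and then the equation $\gamma\delta=\beta$ follows by a second use of uniqueness (both are maps $A\to Y$ whose $R$-image composed with $g$ is $R\beta g$). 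This is shorter than your two-initial-objects argument for (ii)$\Rightarrow$(v).

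Conversely, your route has a small conceptual payoff: your proof of (ii)$\Rightarrow$(v) never mentions the terminal object, so it shows that (i)$\Leftrightarrow$(ii)$\Leftrightarrow$(v) holds for \emph{any} $\Ee$, with the terminal object needed only to add (iii) and (iv) to the list. The paper's packaging obscures this because it routes (v) through (iii).
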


\begin{proof}$R_X$ admits a left adjoint $L_X$ if and only if, for each $f:Y\to RX$ (considered as an object of $\Ff/RX$), the comma category $R_X/f$ has an initial object. This comma category $R_X/f$ may be identified with the factorisation category $\Fact{\Ee}{R}{f}$. It follows that condiditons (i) and (ii) are equivalent, and conditions (iii) and (iv) are equivalent.  Clearly condition (ii) implies condition (iii). A factorisation of $f$ as in (v) is an initial object of $\Fact{\Ee}{R}{f}$ so that (v) implies (ii). It thus remains to be shown that (iii/iv) implies (v).

For this, denote by $t_X:X\to 1$ the unique existing map, and factor $R(t_X)f$ through the initial object of $\Fact{\Ee}{R}{R(t_X)f}$ to obtain the following diagram

\[\xygraph{!{0;(1.5,0):(0,.667)::} {B}="tl" [r] {RX}="tr" [d] {R1}="br" [l] {RA}="bl" "tl":"tr"^-{f}:"br"^-{R(t_X)}:@{<-}"bl"^-{R(t_A)}:@{<-}"tl"^-{g} "bl":@{.>}"tr"|-{Rh}}\]

Observe that $h:A\to X$ may be identified with $L_1(f)$ and $g:B\to RA$ with the unit $\eta_f$ of the $(L_1,R_1)$-adjunction at $f$. Since any factorisation of $f$ as  $\xygraph{{B}="l" [r] {RA'}="m" [r] {RX}="r" "l":"m"^-{g'}:"r"^-{Rh'}}$ can be considered in an obvious way as a factorisation of $R(t_X)f$ in $\Ff/R1$, the universal property of $\eta_f$ yields the map from $(g,A,h)$ to $(g',A',h')$ in $\Fact{\Ee}{R}{f}$ required for the $R$-genericity of $g$.\end{proof}

\subsection{Cartesian and strongly cartesian monads}\label{dfn:cartesian}

Recall that a monad $T$ on a category $\Ee$ with pullbacks is called \emph{cartesian} if $T$ preserves pullbacks, and if all naturality squares of unit and multiplication of~$T$ are pullbacks. A cartesian monad is called \emph{strongly cartesian} if the underlying endofunctor is a local right adjoint.

For a given cartesian monad $T$, any monad $S$ on~$\Ee$ equipped with a cartesian monad morphism $S\Rightarrow T$ will be called \emph{$T$-cartesian}. $T$-cartesian monads are themselves cartesian monads. If $T$ is strongly cartesian, $T$-cartesian monads are strongly cartesian as well (by \ref{lma:lra} and the fact that for cartesian monad morphisms $S\Rightarrow T$, $T$-generic factorisations induce $S$-generic factorisations by pullback).

Any $T$-cartesian monad $S$ determines, and is up to isomorphism uniquely determined by, a \emph{$T$-operad} in the sense of Leinster \cite[4.2.3]{L0}. Indeed, the \emph{$T$-collection} underlying $S$ is simply the morphism $S1\to T1$ induced by evaluation at a terminal object $1$ of $\Ee$. The monad structure of $S$ over $T$ amounts then to the $T$-operad structure of the $T$-collection $S1$ over $T1$ because of the cartesianness of $S\Rightarrow T$.

A generator $\Aa$ of $\Ee$ is called \emph{$T$-generically closed} if for any $T$-generic morphism $B\to TA$ with $B$ in $\Aa$, there is an object isomorphic to $A$ which belongs to $\Aa$.

\begin{thm}\label{thm:lra->arities}Let~$T$ be a strongly cartesian monad on a finitely complete category~$\Ee$. Any dense generator $\Aa$ of~$\Ee$ embeds in a minimal $T$-generically closed dense generator $\Aa_T$. In particular, any~$T$-cartesian monad on~$\Ee$ has arities $\Aa_T$.\end{thm}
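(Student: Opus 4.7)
The plan is to construct $\Aa_T$ as the smallest full subcategory of $\Ee$ containing $\Aa$ that is closed under taking codomains of $T$-generic morphisms, and then to verify in turn that (i) it is essentially small, (ii) it is dense in $\Ee$, (iii) it is $T$-generically closed and minimal as such, and (iv) every $T$-cartesian monad on $\Ee$ respects its arities. Concretely, I would set $\Aa_0:=\Aa$ and inductively let $\Aa_{n+1}$ be $\Aa_n$ together with one representative of each isomorphism class of objects $A$ admitting a $T$-generic morphism $B\to TA$ with $B\in\Aa_n$; then $\Aa_T:=\bigcup_{n\in\NN}\Aa_n$ (or a skeleton thereof).

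Essential smallness follows from Lemma \ref{lma:lra}(iii)--(iv): since $\Ee$ is finitely complete it has a terminal object $1$, and each $T$-generic morphism $g:B\to TA$ is characterised up to canonical isomorphism as the unit of the adjunction $L_1\dashv T_1$ at the element $T(t_A)g\in\Ee(B,T1)$. Thus the isomorphism classes of $T$-generic morphisms out of $B$, and a fortiori of their codomains, are indexed by the set $\Ee(B,T1)$, which keeps each $\Aa_{n+1}$ essentially small. Density of $\Aa_T$ follows by monotonicity of density under enlargement of a full subcategory: the $\Aa_T$-cocone over any $X$ contains the $\Aa$-cocone as a subdiagram, and a direct Yoneda-style argument on any parallel pair $\nu_{\Aa_T}X\rightrightarrows\nu_{\Aa_T}Y$ shows that full faithfulness transfers from $\nu_\Aa$ to $\nu_{\Aa_T}$. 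The $T$-generic closure of $\Aa_T$ is immediate by construction, and minimality is a straightforward induction on $n$: any $T$-generically closed dense generator containing $\Aa$ must contain each $\Aa_n$ up to isomorphism.

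The principal step is (iv): any $T$-cartesian monad $S$ on $\Ee$ has arities $\Aa_T$. By Proposition \ref{prp:monad-with-arities-elementary} this reduces to showing that $\Fact{\Aa_T}{S}{f}$ is connected for every $f:B\to SX$ with $B\in\Aa_T$. Let $\alpha:S\Rightarrow T$ denote the given cartesian monad morphism. Apply Lemma \ref{lma:lra}(v) to $\alpha_X f:B\to TX$ to obtain a $T$-generic factorisation $B\xrightarrow{g}TA\xrightarrow{Th}TX$; by $T$-generic closure of $\Aa_T$ we may assume $A\in\Aa_T$. Cartesianness of the naturality square of $\alpha$ at $h$ identifies $SA$ with the pullback $TA\times_{TX}SX$, so the pair $(g,f)$ induces a unique $\tilde g:B\to SA$ with $\alpha_A\tilde g=g$ and $Sh\cdot\tilde g=f$. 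Granting that $\tilde g$ is itself $S$-generic, the triple $(\tilde g,A,Sh)$ is an initial object of $\Fact{\Ee}{S}{f}$ that already lies in $\Fact{\Aa_T}{S}{f}$; every other object of the latter then receives from it a unique morphism in $\Ee$ which, by fullness of $\Aa_T$, belongs to $\Aa_T$. This witnesses connectedness.

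The main technical obstacle is establishing the $S$-genericity of the lifted morphism $\tilde g$. I would prove this by applying $\alpha$ to any test square for the $S$-genericity of $\tilde g$, thereby reducing the problem to a test square for the already-known $T$-genericity of $g$; the resulting map $\delta:A\to A'$ then lifts back through the defining pullback $SA'=TA'\times_{TX'}SX'$ to provide the required factoring morphism in $\Ee$, with uniqueness inherited from that of $\delta$. This is precisely the principle noted in Section \ref{dfn:cartesian}, that $T$-generic factorisations induce $S$-generic ones by pullback along cartesian monad morphisms, and granting it the theorem follows by combining the iterative construction of $\Aa_T$ with Lemma \ref{lma:lra} and Proposition \ref{prp:monad-with-arities-elementary}.
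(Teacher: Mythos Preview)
Your proof is correct and follows the same overall strategy as the paper: construct $\Aa_T$, verify it is a $T$-generically closed dense generator containing $\Aa$, and then use generic factorisations to produce initial objects in $\Fact{\Aa_T}{S}{f}$ via Proposition~\ref{prp:monad-with-arities-elementary}. The difference lies in how $\Aa_T$ is built. You construct it iteratively as $\bigcup_n\Aa_n$, so that generic closure holds by fiat. The paper instead defines $\Aa_T$ in a \emph{single} step, as the full subcategory spanned by the codomains of $T$-generic factorisations of maps $B\to T1$ with $B\in\Aa$, and then proves closure by invoking the non-trivial fact that $T$-generics compose under the multiplication (i.e.\ $\mu_A\cdot Tg\cdot g'$ is generic when $g,g'$ are), together with the fact that units are generic. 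This buys a more explicit description of $\Aa_T$ and isolates a structural property of generics that your iteration leaves implicit; conversely, your approach is a little more self-contained in that the construction itself does not rely on the composition-of-generics lemma. For step (iv) the two arguments are essentially dual unfoldings of the same idea: the paper first observes that $\Aa_T$ is $S$-generically closed (since $S$-generics arise from $T$-generics by pullback) and then takes an $S$-generic factorisation directly, whereas you take the $T$-generic factorisation first and then lift it along the cartesian square of $\alpha$ to obtain the $S$-generic one.
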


\begin{proof}Up to isomorphism, the objects of~$\Aa_T$ may be obtained in the following way. Denote by $1$ a terminal object of $\Ee$. Take any $f:B\to T1$ with domain $B$ in $\Aa$, and then, according to \ref{lma:lra}(v), generically factor $f$ to obtain a $T$-generic morphism $g:B\to TA$. Define $\Aa_T$ to be the full subcategory of $\Ee$ spanned by all objects $A$ obtained in this way. Since $\Aa$ is small (and $\Ee$ locally small), $\Aa_T$ is small as well. By \cite[5.10.2]{W0}, the components of the unit $\eta$ of $T$ are generic, and so $\Aa_T$ contains $\Aa$; in particular, $\Aa_T$ is a dense generator of $\Ee$, cf. \cite[3.9]{GU}.

For any $T$-generic morphism $g:B \to TA$ with domain $B$ in $\Aa_T$, there is a $T$-generic morphism $g':C \to TB$ with domain $C$ in $\Aa$. In particular, the composite
\[ \xygraph{{C}="p1" [r] {TB}="p2" [r] {T^2A}="p3" [r] {TA}="p4" "p1":"p2"^-{g'}:"p3"^-{Tg}:"p4"^-{\mu_A}} \]
is $T$-generic by \cite[5.14 and 5.10.2]{W0}, so that $A$ belongs to $\Aa_T$, i.e. $\Aa_T$ is $T$-generically closed. It follows then from \cite[5.10.2]{W0} that $\Aa_T$ is $S$-generically closed for any $T$-cartesian monad $S$ on $\Ee$. Hence, given any $f:B \to SX$ with domain $B$ in $\Aa_T$, an $S$-generic factorisation \ref{lma:lra}(v) for it
\[ \xygraph{{B}="l" [r] {SA}="m" [r] {SX}="r" "l":"m"^-{g}:"r"^-{Sh}} \]
may be regarded as an initial object in $\Fact{\Aa_T}{S}{f}$. It follows then from Propositions \ref{prp:EM-object} and \ref{prp:monad-with-arities-elementary} that $S$ has arities $\Aa_T$.\end{proof}

\begin{rmk}\label{rmk:canonical arities}The most important situation in which Theorem \ref{thm:lra->arities} has been applied so far is when $\Ee$ is a presheaf category $\PSh{\CC}$ and $\Aa=\CC$ consists of the representable presheaves. For instance in \cite[4.16]{W} the objects of $\CC_T$ in just this situation were called $T$-cardinals. In the present article, we shall call $\CC_T$ the \emph{canonical arities} for the strongly cartesian monad $T$. Alternatively, these canonical arities are those presheaves which belong to the essential image of the composite functor
\[ \xygraph{!{0;(2.5,0):(0,1)::}{y_\CC/T1}="l" [r] {\PSh{y_{\CC}/T1}\catequiv \PSh{\CC}/T1 } ="m" [r] {\PSh{\CC}}="r" "l":"m"^-{\tn{yoneda}}:"r"^-{L_1}} \]
where $L_1$ is left adjoint to $T_1$, cf. the proof of Lemma \ref{lma:lra} and \cite[2.11]{W}.\end{rmk}

\subsection{Globular operads}\label{dfn:omega-operads}There are many interesting examples of cartesian monads, cf. \cite[section 2]{W} or the work of Kock \cite{K} and Joyal-Kock \cite{JK}. We discuss here the \emph{$\omega$-operads} of Batanin \cite[7.1]{Ba} since they motivated many ideas of this article.

Starting point is Batanin's observation \cite[4.1.1]{Ba} that the free $\omega$-category monad $D_\omega$ on the category of globular sets is cartesian. Street \cite{St} and Leinster \cite{L0} observe that Batanin's concept of an $\omega$-operad amounts to the concept of a $D_\omega$-operad, cf. Section \ref{dfn:cartesian}, i.e. each $\omega$-operad induces a $D_\omega$-cartesian monad on globular sets. Similarily, Batanin's \emph{$n$-operads} are $D_n$-operads and induce thus  $D_n$-cartesian monads on $n$-globular sets, where $D_n$ denotes the free $n$-category monad.

It turns out that $D_\omega$ is a strongly cartesian monad so that the monads induced by $\omega$-operads have canonical arities by Theorem \ref{thm:lra->arities} and Remark \ref{rmk:canonical arities}. These canonical arities have been constructed by Batanin \cite[pg. 62]{Ba}, and called \emph{globular cardinals} by Street \cite[pg. 311]{St}, resp. \emph{globular pasting diagrams} by Leinster \cite[8.1]{L0}. We shall use the notation $\Theta_0$ for these arities, following \cite[1.5]{Be} which contains a description of $\Theta_0$ as full subcategory of globular sets. There are truncated versions $\Theta_{n,0}$ of $\Theta_0$ which serve as canonical arities for $D_n$. For instance, the canonical arities $\Theta_{1,0}$ for the free category monad $D_1$ consist of those graphs which represent directed edge-paths of finite length. As a category, $\Theta_{1,0}$ may be identified with the subcategory $\Delta_0$ of the simplex category $\Delta$ having same objects as $\Delta$ but only those simplicial operators $\phi:[m]\to[n]$ which satisfy $\phi(i+1)=\phi(i)+1$ for $0\leq i<m$.

Theorem \ref{T2} applied to the free category monad $D_1$ yields the characterisation of small categories as simplicial sets satisfying the Grothendieck-Segal conditions \cite{G,Se0}, see \cite[1.13]{Be}, \cite[2.8]{W}, \cite{M} for details. There are analogous characterisations of $\omega$-categories (or more generally: algebras over $\omega$-operads) as nerves subject to generalised Grothendieck-Segal conditions, cf. \cite[1.12/17]{Be}, \cite[4.26]{W}.

\section{Theories with arities.}

\begin{dfn}[cf. \cite{M}]\label{dfn:theory}Let~$\Ee$ be a category with dense generator~$\Aa$.

A \emph{theory} $(\Theta,j)$ with arities $\Aa$ on $\Ee$ is a bijective-on-objects functor $j:\Aa\to\Theta$ such that the induced monad $j^*j_!$ on $\widehat{\Aa}$ preserves the essential image of~$\nu_A:\Ee\to\widehat{\Aa}$.

A \emph{$\Theta$-model} is a presheaf on $\Theta$ whose restriction along $j$ belongs to the essential image of $\nu_\Aa$.\end{dfn}

A \emph{morphism of $\Theta$-models} is just a natural transformation of the underlying presheaves. The category of $\Theta$-models will be denoted $\Mod_\Theta$, and is thus a full subcategory of the presheaf category $\widehat{\Theta}$.

A \emph{morphism of theories} $(\Theta_1,j_1)\to(\Theta_2,j_2)$ is a functor $\theta:\Theta_1\to\Theta_2$ such that $j_2=\theta j_1$. We shall write $\Th(\Ee,\Aa)$ for the category of theories with arities $\Aa$ on $\Ee$, and $\Mnd(\Ee,\Aa)$ for the category of monads with arities $\Aa$ on $\Ee$. Observe that monad morphisms $\rho:T_1\Rightarrow T_2$ point here in the opposite direction than in Section \ref{dfn:monad-morphism} where we adopted the convention $(Id_\Ee,\rho):(\Ee,T_2)\to(\Ee,T_1)$.

\begin{prp}\label{prp:theory-models}Let~$T$ be a monad with arities~$\Aa$ on a category~$\Ee$. Let~$\Theta_T$ be the full subcategory of $\Ee^T$ spanned by the free $T$-algebras on the objects of~$\Aa$, and let $j_T$ be the (restricted) free $T$-algebra functor.

Then, $(\Theta_T,j_T)$ is a theory with arities $\Aa$ on $\Ee$. The algebraic nerve functor induces an equivalence between the categories of~$T$-algebras and of~$\Theta_T$-models.\end{prp}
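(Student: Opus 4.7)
The plan is to verify first that $(\Theta_T,j_T)$ satisfies Definition~\ref{dfn:theory}, and then to read off the equivalence $\Ee^T\simeq\Mod_{\Theta_T}$ directly from the Nerve Theorem~\ref{T2}.

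By construction $j_T:\Aa\to\Theta_T$ is bijective on objects, so the only non-trivial part of being a theory is showing that the monad $j_T^*(j_T)_!$ on $\PSh\Aa$ preserves $\EssIm(\nu_\Aa)$. For this I would invoke Proposition~\ref{P3}: since $T$ has arities $\Aa$, the right square in diagram~(\ref{nerve}) is an exact adjoint square. Unpacking the exactness together with the commutation $\nu_\Aa U\cong j_T^*\nu_T$ yields a canonical isomorphism
\[
j_T^*(j_T)_!\,\nu_\Aa\;\cong\;\nu_\Aa\,T.
\]
Since $T$ sends $\Ee$ to $\Ee$, the essential image of $\nu_\Aa T$ is contained in $\EssIm(\nu_\Aa)$. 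Given any presheaf $P\in\EssIm(\nu_\Aa)$, write $P\cong\nu_\Aa X$; then $j_T^*(j_T)_!P\cong\nu_\Aa TX\in\EssIm(\nu_\Aa)$, as required.

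For the second assertion, the Nerve Theorem~\ref{T2} furnishes, under the hypothesis that $T$ is a monad with arities $\Aa$, two facts: (i) $\Theta_T$ is a dense generator of $\Ee^T$, so that the nerve functor $\nu_T:\Ee^T\to\PSh{\Theta_T}$ is fully faithful; and (ii) the essential image of $\nu_T$ is spanned precisely by those presheaves on $\Theta_T$ whose restriction along $j_T$ lies in $\EssIm(\nu_\Aa)$. By Definition~\ref{dfn:theory}, this latter class is exactly $\Mod_{\Theta_T}$. Therefore $\nu_T$ corestricts to an equivalence $\Ee^T\xrightarrow{\simeq}\Mod_{\Theta_T}$.

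There is no genuine obstacle here: both halves of the statement reduce to ingredients already assembled in Section~1, the first via Proposition~\ref{P3} and the second via Theorem~\ref{T2}. The only point requiring a line of care is the translation of ``exact adjoint square'' into the Beck--Chevalley-type isomorphism $j_T^*(j_T)_!\nu_\Aa\cong\nu_\Aa T$, which is precisely what makes the essential image of $\nu_\Aa$ stable under $j_T^*(j_T)_!$ and thereby legitimises the definition of theory in this context.
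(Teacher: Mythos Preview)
Your proof is correct and follows essentially the same route as the paper: invoke Proposition~\ref{P3} to deduce that $j_T^*(j_T)_!$ preserves $\EssIm(\nu_\Aa)$, then read off the equivalence from Theorem~\ref{T2}. The only difference is cosmetic---you spell out the intermediate isomorphism $j_T^*(j_T)_!\nu_\Aa\cong\nu_\Aa T$ that the paper leaves implicit in the phrase ``exact adjoint square''.
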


\begin{proof}Since, by Proposition \ref{P3}, the right square of diagram (\ref{nerve}) is an exact adjoint square, the monad $j_T^*(j_T)_!$ on $\widehat{\Aa}$ preserves the essential image of $\nu_A$. Therefore, $(\Theta_T,j_T)$ is a theory with arities $\Aa$. Theorem \ref{T2} implies then that $\Mod_{\Theta_T}$ is the essential image of the fully faithful algebraic nerve functor $\nu_T:\Ee^T\to\PSh{\Theta_T}$.\end{proof}

\begin{lma}\label{lma:theory-morphism}There is a canonical one-to-one correspondence between theory morphisms $(\Theta_1,j_1)\to(\Theta_2,j_2)$ and monad morphisms $j_1^*(j_1)_!\Rightarrow j_2^*(j_2)_!$.\end{lma}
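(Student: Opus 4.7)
The plan is to exhibit mutually inverse assignments $\theta\mapsto\rho_\theta$ and $\rho\mapsto\theta_\rho$, exploiting the fact that left Kan extension preserves representables, so that $(j_i)_!y_\Aa\cong y_{\Theta_i}j_i$ and hence by Yoneda
\[
\Theta_i(j_iA',j_iA)\;\cong\;\widehat{\Theta_i}(y_{\Theta_i}(j_iA'),y_{\Theta_i}(j_iA))\;\cong\;\bigl(j_i^*(j_i)_!y_\Aa(A)\bigr)(A').
\]
Thus $\Theta_i$ can be recovered from the monad $T_i:=j_i^*(j_i)_!$ by restricting to representables, with composition in $\Theta_i$ reflected by the monad multiplication $\mu^{(i)}=j_i^*\epsilon^{(i)}(j_i)_!$ and the functor $j_i$ reflected by the monad unit $\eta^{(i)}$.

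Given $\theta:\Theta_1\to\Theta_2$ with $\theta j_1=j_2$, the canonical identifications $(j_2)_!\cong\theta_!(j_1)_!$ and $j_2^*=j_1^*\theta^*$ allow me to define the associated monad morphism as the whiskered unit of $\theta_!\ladj\theta^*$:
\[
\rho_\theta \;=\; j_1^*\,\eta^\theta\,(j_1)_!\;:\; T_1 \;\Longrightarrow\; j_1^*\theta^*\theta_!(j_1)_! \;\cong\; T_2.
\]
That $\rho_\theta$ satisfies the monad morphism axioms is a direct consequence of the standard formulas for the unit and counit of a composite adjunction: one has $\eta^{(2)}=\rho_\theta\cdot\eta^{(1)}$ and $\epsilon^{(2)}=\epsilon^\theta\cdot\theta_!\epsilon^{(1)}\theta^*$, from which unit and multiplication compatibility follow by whiskering.

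Conversely, given a monad morphism $\rho:T_1\Rightarrow T_2$, I define $\theta_\rho:\Theta_1\to\Theta_2$ by $\theta_\rho(j_1A):=j_2A$ on objects (using the bijectivity of the $j_i$) and on morphisms by transporting $\rho_{y_\Aa(A)}$ evaluated at $A'$ through the Yoneda identifications above. Functoriality of $\theta_\rho$ with respect to composition follows from the multiplication axiom of $\rho$ together with the matching of $\mu^{(i)}$ with composition in $\Theta_i$ on representables; and $\theta_\rho j_1=j_2$ on morphisms follows from the unit axiom of $\rho$ together with the identification of $\eta^{(i)}_{y_\Aa(A)}$ at $A'$ with the action of $j_i$ on $\Aa(A',A)$.

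Mutual inversion holds on representables by the classical formula identifying the unit of a left Kan extension adjunction at a representable with the relevant morphism map; and it extends globally because $T_1,T_2$ preserve colimits (as composites of a left Kan extension and a restriction) and hence natural transformations between them are determined by their components on representables. The main technical point to verify carefully is the matching of $\mu^{(i)}$ on representables with the composition law of $\Theta_i$: this requires tracing $\epsilon^{(i)}$ at $(j_i)_!y_\Aa(A)=y_{\Theta_i}(j_iA)$ through the Yoneda identifications, and is the step where the correspondence becomes more than a purely formal consequence of adjunction bookkeeping.
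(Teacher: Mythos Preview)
Your argument is correct and follows the paper's strategy: the forward direction is identical (whiskering the unit of $\theta_!\dashv\theta^*$ by $j_1^*(-)(j_1)_!$), and your backward direction---reading off $\theta_\rho$ from the components of $\rho$ on representables via the Yoneda identifications $\Theta_i(j_iA',j_iA)\cong T_i(y_\Aa A)(A')$---is precisely the restriction to representables of the induced Kleisli functor $\PSh\Aa_{T_1}\to\PSh\Aa_{T_2}$, which is how the paper packages this step. The paper's only additional remark is that $\Theta_i$ may be identified with (the representable part of) the Kleisli category of $T_i=j_i^*(j_i)_!$, which absorbs your ``main technical point'' about matching $\mu^{(i)}$ with composition in $\Theta_i$ into the standard fact that monad morphisms induce functors between Kleisli categories.
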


\begin{proof}Let $\theta:\Theta_1\to\Theta_2$ be a functor such that $j_2=\theta j_1$. In particular, the monad $j_2^*(j_2)_!$ may be identified with $j_1^*\theta^*\theta_!(j_1)_!$, whence the unit of the $(\theta_!,\theta^*)$-adjunction induces a monad morphism $j_1^*(j_1)_!\Rightarrow j_2^*(j_2)_!$. Conversely, a given monad morphism $j_1^*(j_1)_!\Rightarrow j_2^*(j_2)_!$ induces a functor of Kleisli categories $\PSh\Aa_{j_1^*(j_1)_!}\to\PSh\Aa_{j_2^*(j_2)_!}$. Moreover, for any theory $(\Theta,j)$, the forgetful functor $j^*$ is monadic, i.e. the presheaf category $\PSh\Theta$ is equivalent to the Eilenberg-Moore category $\PSh\Aa^{j^*j_!}$, and hence the category $\Theta$ is isomorphic to the Kleisli category $\PSh\Aa_{j^*j_!}$. Therefore, any monad morphism $j_1^*(j_1)_!\Rightarrow j_2^*(j_2)_!$ induces a functor $\theta:\Theta_1\to\Theta_2$ such that $j_2=\theta j_1$.

The two constructions are mutually inverse.\end{proof}

\begin{thm}[cf. \cite{M}]\label{thm:monad-theory}Let~$\Ee$ be a category with dense generator~$\Aa$. The assignment $T\mapsto(\Theta_T,j_T)$ induces an adjoint equivalence between the category of monads with arities $\Aa$ and the category of theories with arities $\Aa$.\end{thm}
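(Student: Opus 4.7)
The plan is to construct an explicit candidate inverse $(\Theta,j)\mapsto T_{(\Theta,j)}$ going from $\Th(\Ee,\Aa)$ to $\Mnd(\Ee,\Aa)$ and then use Proposition \ref{P3} and Lemma \ref{lma:theory-morphism} to match objects and morphisms on the nose. First, for a theory $(\Theta,j)$ with arities $\Aa$ on $\Ee$, the restriction functor $j^*:\PSh\Theta\to\PSh\Aa$ has both a left adjoint $j_!$ and a right adjoint $j_*$ (since $j$ is bijective on objects between small categories), so the monad $j^*j_!$ on $\PSh\Aa$ preserves all colimits. By the defining property of a theory, $j^*j_!$ preserves the essential image of the fully faithful nerve $\nu_\Aa:\Ee\to\PSh\Aa$, and hence restricts, up to the equivalence $\Ee\simeq\EssIm(\nu_\Aa)$, to an endofunctor $T_{(\Theta,j)}$ on $\Ee$ characterised (up to canonical isomorphism) by $\nu_\Aa T_{(\Theta,j)}\cong j^*j_!\,\nu_\Aa$. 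The unit and multiplication of $j^*j_!$ transport along this isomorphism to equip $T_{(\Theta,j)}$ with a monad structure.

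Second, I would check that $T_{(\Theta,j)}$ is a monad with arities $\Aa$. Since the Yoneda-cocones are colimit-cocones in $\PSh\Aa$ and since $\nu_\Aa$ sends $\Aa$-cocones in $\Ee$ to Yoneda-cocones in $\PSh\Aa$, the colimit-preserving functor $j^*j_!$ sends them to colimit-cocones; transporting back along $\nu_\Aa$, this says $\nu_\Aa T_{(\Theta,j)}$ sends $\Aa$-cocones to colimit-cocones, as required by Definition \ref{monadwitharities}.

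Third, for morphism compatibility, I would combine Lemma \ref{lma:theory-morphism} with the observation that, by Proposition \ref{P3} together with the fully faithfulness of $\nu_\Aa$, monad morphisms $T_1\Rightarrow T_2$ in $\Mnd(\Ee,\Aa)$ correspond bijectively and naturally to monad morphisms $j_{T_1}^*(j_{T_1})_!\Rightarrow j_{T_2}^*(j_{T_2})_!$ on $\PSh\Aa$. Composing the two bijections produces a natural bijection between morphisms in $\Mnd(\Ee,\Aa)$ and morphisms in $\Th(\Ee,\Aa)$, so both assignments are functorial and the correspondence on morphisms is tight.

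Finally, I would verify the two round-trips. Starting from a monad $T$ with arities $\Aa$, Proposition \ref{P3} gives $\nu_\Aa T\cong j_T^*(j_T)_!\,\nu_\Aa$, so $T_{(\Theta_T,j_T)}\cong T$ as monads by the characterising property above together with full faithfulness of $\nu_\Aa$. Starting from a theory $(\Theta,j)$, I would identify $\Theta$ with the Kleisli category of $j^*j_!$ on the representables of $\PSh\Aa$ (as used in the proof of Lemma \ref{lma:theory-morphism}) and $\Theta_{T_{(\Theta,j)}}$ with the Kleisli category of $T_{(\Theta,j)}$ on $\Aa$; these match under $\nu_\Aa$ by construction, yielding $(\Theta_{T_{(\Theta,j)}},j_{T_{(\Theta,j)}})\cong(\Theta,j)$ in $\Th(\Ee,\Aa)$. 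I expect the main obstacle to be not any single step but the bookkeeping of naturality: ensuring the chosen isomorphisms $\nu_\Aa T_{(\Theta,j)}\cong j^*j_!\,\nu_\Aa$ are natural in $(\Theta,j)$ and compatible with monad/theory morphisms, so that the two round-trips are not merely pointwise isomorphic to identities but naturally isomorphic to them, upgrading the equivalence into an adjoint equivalence.
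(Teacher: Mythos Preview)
Your proposal is correct and uses essentially the same ingredients as the paper: the inverse assignment is the paper's functor $M$, defined there as $(\Theta,j)\mapsto\rho_\Aa j^*j_!\nu_\Aa$ for a chosen quasi-inverse $\rho_\Aa$ of $\nu_\Aa:\Ee\to\EssIm(\nu_\Aa)$, which is exactly your $T_{(\Theta,j)}$; and both arguments hinge on Proposition~\ref{P3} and Lemma~\ref{lma:theory-morphism}.

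The organisational difference is that the paper first establishes $\Theta\dashv M$ as an \emph{adjunction} by exhibiting the hom-set bijection $\Mnd(\Ee,\Aa)(T,M(\Theta,j))\cong\Th(\Ee,\Aa)((\Theta_T,j_T),(\Theta,j))$ directly (using Lemma~\ref{L3}c to pass between $2$-cells after $\nu_\Aa$ and $2$-cells on $\PSh\Aa$), and only then checks that unit and counit are invertible. This packaging dispatches exactly the naturality bookkeeping you flag as the ``main obstacle'': once the hom-set bijection is natural, functoriality of $M$ and compatibility of the round-trip isomorphisms come for free from the adjunction axioms. Your route---building the inverse and checking the two composites pointwise---is equally valid but forces you to verify by hand that the chosen isomorphisms assemble naturally; the paper's adjunction-first framing is the cleaner way to avoid that.
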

\begin{proof}We first show that the assignment $T\mapsto(\Theta_T,j_T)$ extends to a functor $\Theta:\Mnd(\Ee,\Aa)\to\Th(\Ee,\Aa)$. By definition, the theory $(\Theta_T,j_T)$ embeds in the Eilenberg-Moore category $\Ee^T$ \emph{via} the Kleisli category $\Ee_T$. Any monad morphism $\phi:S\Rightarrow T$ induces a functor of Kleisli categories $\Ee_S\to\Ee_T$; the latter restricts to the required morphism of theories $\Theta_\phi:(\Theta_S,j_S)\to(\Theta_T,j_T)$. This definition is clearly functorial in monad morphisms.

We next show that $\Theta$ admits a right adjoint $M$. By definition of a theory $(\Theta,j)$, the monad $j^*j_!$ on $\widehat{\Aa}$ restricts to the essential image $\EssIm(\nu_\Aa)$. The choice of a right adjoint $\rho_\Aa:\EssIm(\nu_\Aa)\to\Ee$ to the equivalence $\nu_\Aa:\Ee\to\EssIm(\nu_\Aa)$ induces a monad $\rho_\Aa j^*j_!\nu_\Aa$ on $\Ee$; this monad has arities $\Aa$ on $\Ee$, since the monad $j^*j_!$ has arities $\Aa$ on $\widehat{\Aa}$. The assignment $(\Theta,j)\mapsto \rho_\Aa j^*j_!\nu_\Aa$ extends in a canonical way to a functor $M:\Th(\Ee,\Aa)\to\Mnd(\Ee,\Aa)$. We have to show that for any monad $T$ and theory $(\Theta,j)$ with arities $\Aa$, monad morphisms $T\Rightarrow \rho_\Aa j^*j_!\nu_\Aa$ are in binatural one-to-one correspondence with theory morphisms $(\Theta_T,j_T)\to(\Theta,j)$, or equivalently (according to Lemma \ref{lma:theory-morphism}), with monad morphisms $j_T^*(j_T)_!\Rightarrow j^*j_!$.

By adjunction, monad morphisms $T\Rightarrow \rho_\Aa j^*j_!\nu_\Aa$ correspond  bijectively to $2$-cells $\nu_A T\Rightarrow j^*j_!\nu_\Aa$ satisfying the identities of Section \ref{dfn:monad-morphism}. Since $T$ has arities $\Aa$, Proposition \ref{P3} implies the existence of an invertible $2$-cell $j_T^*(j_T)_!\nu_\Aa\cong\nu_A T$ so that we get a bijective correspondence between monad morphisms  $T\Rightarrow \rho_\Aa j^*j_!\nu_\Aa$ and those $2$-cells $j_T^*(j_T)_!\nu_\Aa\Rightarrow j^*j_!\nu_A$ which are compatible with the monad structures of $j_T^*(j_T)_!$ and $j^*j_!$. Since these monads on $\PSh\Aa$ preserve colimits, they coincide (up to canonical isomorphism) with the left Kan extension (along $y_\Aa$) of their restriction to $\Aa$. Moreover, as well $j_T^*(j_T)_!\nu_\Aa$ as well $j^*j_!\nu_\Aa$ are arity-respecting functors $(\Ee,\Aa)\to(\PSh\Aa,\Aa)$. It follows then from Lemma \ref{L3}c and the evident identity $y_\Aa=\nu_\Aa i_\Aa$ that $2$-cells $j_T^*(j_T)_!\nu_\Aa\Rightarrow j^*j_!\nu_A$ correspond bijectively to $2$-cells $j_T^*(j_T)_!\Rightarrow j^*j_!$ as required.

We finally show that the $(\Theta,M)$-adjunction is an adjoint equivalence. For this, observe that the unit of the $(\Theta,M)$-adjunction is invertible by Proposition \ref{P3}. On the other hand, the right adjoint $M$ is full and faithtful by Lemma \ref{lma:theory-morphism}, i.e. the counit of $(\Theta,M)$-adjunction is invertible as well.\end{proof}

\subsection{Algebraic theories}\label{sct:algebraictheory}Lawvere's algebraic theories \cite{La} can be considered as theories in the sense of Definition \ref{dfn:theory} for $\Ee$ the category of sets and $\Aa$ a skeleton of the full subcategory of finite sets. Indeed, we are in the situation of Section \ref{section:accessible} with $\alpha$ being the countable cardinal so that a monad $T$ has arities $\Aa$ if and only if $T$ is finitary (i.e. preserves filtered colimits). On the other hand, a theory with arities $\Aa$ is by definition a bijective-on-objects functor $j:\Aa\to\Theta$ such that $j^*j_!$ preserves the essential image of $\nu_\Aa:\Ee\to\PSh\Aa$, i.e. flat presheaves on $\Aa$. The latter condition can be expressed in more familiar terms: since flat presheaves are filtered colimits of representable presheaves, and since $j^*j_!$ preserves colimits, it suffices to require that $j^*j_!$ takes representable presheaves to flat presheaves. This in turn means that the representable presheaves on $\Theta$ should be flat when restricted to $\Aa$, i.e. they should take coproducts in $\Aa$ to products in sets. Therefore, a theory with arities $\Aa$ is a bijective-on-objects functor $j:\Aa\to\Theta$ which preserves the coproduct-structure of $\Aa$. This is precisely (the dual of) an algebraic theory in the sense of Lawvere; moreover, $\Theta$-models in the sense of Definition \ref{dfn:theory} coincide with models of the algebraic theory $\Theta^\op$ in Lawvere's sense. Theorem \ref{thm:monad-theory} recovers thus the classical correspondence between finitary monads on sets and  Lawvere's algebraic theories. This correspondence yields quite directly that categories of algebras (over sets) for a finitary monad can be characterised (cf. \cite{La}, \cite[chapter 11]{GU}) as being Barr-exact categories admitting a finitely presentable, regular, projective generator (together with its coproducts).\vspace{2ex}

The preceding discussion of algebraic theories reveals how important it is to get hold of the essential image of the nerve functor $\nu_\Aa:\Ee\to\PSh\Aa$. We shall single out a particular case in which this essential image can be described combinatorially, namely the case where~$\Ee$ is a presheaf category $\PSh\CC$ such that $\Aa$ contains the category $\CC$ of representable presheaves. In this case, the nerve functor $\nu_\Aa:\PSh\CC\to\PSh\Aa$ may be identified with the right Kan extension along the inclusion $\CC\inc\Aa$. Therefore, the essential image of the nerve may be obtained by factoring $\nu_\Aa$ into a surjection $\PSh\CC\to\Sh(\Aa,J)$ followed by an embedding $\Sh(\Aa,J)\to\PSh\Aa$, for a uniquely determined Grothendieck topology $J$ on $\Aa$, cf. \cite[VII]{MM}. Thus (cf. \cite[4.14]{W}),

\begin{lma}\label{lma:essential-image}Let~$\Aa$ be a dense generator of~$\PSh\CC$ containing~$\CC$. For a presheaf~$X$ on~$\Aa$, the following three conditions are equivalent:\begin{itemize}\item[(i)]$X$ belongs to the essential image of the nerve functor~$\nu_\Aa:\PSh\CC\to\PSh\Aa$;\item[(ii)]$X$ is a sheaf for the image-topology $J$ of the geometric morphism $\nu_\Aa$;\item[(iii)]$X$ takes the $\CC$-cocones of the objects of $\Aa$ in $\PSh\CC$ to limit-cones in sets.\end{itemize}In particular, a bijective-on-objects functor $j:\Aa\to\Theta$ is a theory with arities $\Aa$ on $\PSh\CC$ if and only if the monad $j^*j_!$ on $\PSh\Aa$ preserves $J$-sheaves.\end{lma}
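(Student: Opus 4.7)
The plan is to establish the three equivalences (i) $\Leftrightarrow$ (ii) $\Leftrightarrow$ (iii), after which the final claim is a direct reading of Definition \ref{dfn:theory}. As a setup, note that since $\Aa$ is a full subcategory of $\PSh\CC$ containing $\CC$, the restriction functor $i^*:\PSh\Aa \to \PSh\CC$ along the inclusion $i:\CC \hookrightarrow \Aa$ agrees on representables with $i_\Aa$: for $A \in \Aa$, the presheaf $i^*y_\Aa(A)$ sends $C \in \CC$ to $\Aa(C,A) = A(C)$, hence is $A$ itself as an object of $\PSh\CC$. Both $i^*$ and the left Kan extension $L$ of $i_\Aa$ along $y_\Aa$ are cocontinuous and agree on representables, so $i^* \cong L$; as $L \dashv \nu_\Aa$ by construction, we obtain $i^* \dashv \nu_\Aa$. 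Moreover $i^*$ is left exact (limits of presheaves are pointwise) and $\nu_\Aa$ is fully faithful by density, so $(i^*,\nu_\Aa)$ is a geometric embedding.

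For (i) $\Leftrightarrow$ (iii), I would argue by a direct Yoneda calculation. Using that each $A \in \Aa$ is the colimit of its $\CC$-cocone inside $\PSh\CC$ (density of $\CC$), for $X \in \PSh\Aa$ and $A \in \Aa$ one has
\[ \nu_\Aa(i^*X)(A) \;=\; \PSh\CC(A, i^*X) \;\cong\; \lim_{(C\to A)\in\CC/A} X(C), \]
and the unit $\eta_X : X \to \nu_\Aa i^*X$ is the canonical comparison $X(A) \to \lim_{C\to A} X(C)$. Since $\nu_\Aa$ is fully faithful, $X$ lies in its essential image iff $\eta_X$ is invertible, which is exactly condition (iii).

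For (i) $\Leftrightarrow$ (ii), I invoke the classical correspondence between subtoposes of a presheaf topos and Grothendieck topologies on the underlying site (cf. \cite{MM}, VII): the geometric embedding $\nu_\Aa$ identifies $\PSh\CC$ with $\Sh(\Aa, J)$ for the unique image-topology $J$. The concluding assertion is then just Definition \ref{dfn:theory} read through this identification: $j:\Aa \to \Theta$ is a theory with arities $\Aa$ on $\PSh\CC$ iff $j^*j_!$ preserves the essential image of $\nu_\Aa$, which equals $\Sh(\Aa,J)$. The only delicate point in the argument is the identification $i^* \dashv \nu_\Aa$ carried out in the setup; once that is in hand, (i) $\Leftrightarrow$ (iii) is Yoneda plus density and (i) $\Leftrightarrow$ (ii) is standard topos theory.
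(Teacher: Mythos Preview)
Your proof is correct. The only genuine difference from the paper is how condition (iii) is tied in: the paper links (ii) and (iii) by observing that the $\CC$-cocones of the objects of $\Aa$ are minimal covering sieves for $J$ and hence generate it, so the sheaf condition for $J$ reduces to the limit condition in (iii); you instead link (i) and (iii) directly by computing the unit of the adjunction $i^*\dashv\nu_\Aa$ at each $A\in\Aa$ as the canonical comparison $X(A)\to\lim_{C\to A}X(C)$, avoiding any explicit description of $J$. Your route is slightly more elementary in that it never needs to identify the covering sieves; the paper's route, on the other hand, tells you concretely what $J$ looks like, which is useful information downstream (e.g.\ for Lemma~\ref{lma:useful}). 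Both handle (i)$\Leftrightarrow$(ii) the same way, by the standard subtopos/topology correspondence.
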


\begin{proof}We have seen that (i) and (ii) are equivalent. The equivalence of (ii) and (iii) follows from the fact that the $\CC$-cocones of the objects of $\Aa$ are minimal covering sieves for the Grothendieck topology $J$ on $\Aa$, and thus generate $J$.\end{proof}

\begin{rmk}\label{rmk:Paul-Andre}Although more transparent, condition \ref{lma:essential-image}(ii) is still difficult to handle in practice. In particular, in order to check that a bijective-on-objects functor $j:\Aa\to\Theta$ is a theory, it is in general \emph{insufficient} to verify just that $j^*j_!$ takes the representable presheaves to $J$-sheaves (or, what amounts to the same, that the representable presheaves on $\Theta$ are $\Theta$-models). This is due to the fact that $J$-sheaves \emph{cannot} in general be characterised as a certain kind of colimits of representable presheaves, like in the case of algebraic theories, see \cite[Appendix III]{M} for an instructive example. The following \emph{relative} criterion is therefore useful:\end{rmk}

\begin{lma}\label{lma:useful}Assume that $(\Theta_2,j_2)$ is a theory with arities $\Aa$ on $\PSh\CC$, and that $j_1:\Aa\to\Theta_1$ is a bijective-on-objects functor, equipped with a \emph{cartesian} monad morphism $j_1^*(j_1)_!\Rightarrow j_2^*(j_2)_!$. Then $(\Theta_1,j_1)$ is a theory with arities $\Aa$ on $\PSh\CC$ if and only if the monad $j_1^*(j_1)_!$ takes the terminal presheaf on $\Aa$ to a $J_1$-sheaf.\end{lma}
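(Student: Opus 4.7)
The plan is to split the biconditional into its two directions, with the forward implication being essentially tautological and the converse resting on the cartesian hypothesis together with the standard fact that sheaves are closed under limits.

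For the forward implication, if $(\Theta_1,j_1)$ is a theory with arities $\Aa$ on $\PSh\CC$, then by definition $j_1^*(j_1)_!$ preserves the essential image of $\nu_\Aa$. By Lemma \ref{lma:essential-image} this essential image is precisely the subcategory of $J$-sheaves, so, since the terminal presheaf on $\Aa$ is trivially a $J$-sheaf (it is the terminal object of $\Sh(\Aa,J)$), its image $j_1^*(j_1)_!(1)$ is a $J$-sheaf as required.

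For the converse, the strategy is to show that $j_1^*(j_1)_!$ preserves $J$-sheaves outright, and then invoke Lemma \ref{lma:essential-image} to conclude that $(\Theta_1,j_1)$ is a theory. The key is the cartesianness of the monad morphism $\alpha:j_1^*(j_1)_!\Rightarrow j_2^*(j_2)_!$. For any presheaf $X$ on $\Aa$, naturality of $\alpha$ along the unique morphism $X\to 1$ yields a commutative square
\begin{diagram}[small]j_1^*(j_1)_!(X)&\rTo^{\alpha_X}&j_2^*(j_2)_!(X)\\\dTo&&\dTo\\j_1^*(j_1)_!(1)&\rTo^{\alpha_1}&j_2^*(j_2)_!(1)\end{diagram}
which by the cartesian hypothesis on $\alpha$ is a pullback in $\PSh\Aa$. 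Assume that $X$ is a $J$-sheaf. Then the three presheaves other than $j_1^*(j_1)_!(X)$ in this square are all $J$-sheaves: $j_2^*(j_2)_!(X)$ and $j_2^*(j_2)_!(1)$ because $(\Theta_2,j_2)$ is a theory applied to the $J$-sheaves $X$ and $1$ respectively, and $j_1^*(j_1)_!(1)$ by the assumption of the lemma. Since $\Sh(\Aa,J)\hookrightarrow \PSh\Aa$ is a reflective subcategory, hence closed under limits, the pullback $j_1^*(j_1)_!(X)$ is itself a $J$-sheaf.

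The only mild subtlety worth flagging is the precise meaning of \emph{cartesian} monad morphism: namely, that every naturality square of the underlying natural transformation is a pullback (cf. Section \ref{dfn:cartesian}), which is exactly what legitimises the pullback claim above. Past that, no new ideas enter -- the argument reduces to the standard fact that sheaves form a reflective, hence limit-closed, subcategory of the ambient presheaf category, so that pullbacks of $J$-sheaves are $J$-sheaves.
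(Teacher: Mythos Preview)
Your proof is correct and follows essentially the same approach as the paper: both directions hinge on the cartesianness of $\alpha:j_1^*(j_1)_!\Rightarrow j_2^*(j_2)_!$ together with closure of $J$-sheaves under pullbacks. Your version is in fact slightly more direct than the paper's, since you use the naturality square of $\alpha$ along $X\to 1$ (cartesian immediately by hypothesis), whereas the paper passes through a sheafification square whose cartesianness is asserted but needs exactly your square to justify it.
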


\begin{proof}The necessity of the condition is clear. For its sufficiency, observe that the hypothesis on $(\Theta_1,j_1)$ implies that for all presheaves $X$ on $\Aa$, the following square\begin{diagram}[small,silent]j_1^*(j_1)_!(X)&\rTo&a_{J_1}j_1^*(j_1)_!(X)\\\dTo&&\dTo\\j_2^*(j_2)_!(X)&\rTo&a_{J_2}j_2^*(j_2)_!(X)\end{diagram}is cartesian, where $a_{J_1}$ (resp. $a_{J_2}$) denotes $J_1$- (resp. $J_2$-) sheafification. Therefore, since the monad $j_2^*(j_2)_!$ preserves $J_2$-sheaves by Lemma \ref{lma:essential-image}, the monad $j_1^*(j_1)_!$ preserves $J_1$-sheaves, whence $(\Theta_1,j_1)$ is a theory with arities $\Aa$.\end{proof}

We introduce the following terminology for any theory $(\Theta,j)$ on $(\Ee,\Aa)$:  the morphisms in the image of $j$ are called \emph{free}; a morphism $g$ in $\Theta$ is called \emph{generic} if for each factorisation $g=j(f)g'$, $f$ is invertible. In other words, a morphism is generic if it factors through free morphisms only if they are invertible in $\Aa$.

\begin{dfn}\label{dfn:homogeneous}A theory $(\Theta,j)$ on $(\Ee,\Aa)$ is called \emph{homogeneous} if\begin{itemize}\item[(i)]$j$ is faithful;\item[(ii)]any morphism in $\Theta$ factors in an essentially unique way as a generic morphism followed by a free morphism;\item[(iii)]the composite of two generic morphisms is generic;\item[(iv)]the invertible morphisms of~$\Theta$ are those which are at once generic and free.\end{itemize}\end{dfn}

In other words, homogeneous theories are precisely those which contain the arities as a \emph{subcategory} and which admit a generic/free \emph{factorisation system}. For given homogeneous theories $(\Theta,j),\,(\Theta',j')$ with same arities we say that $(\Theta',j')$ is \emph{$(\Theta,j)$-homogeneous} if it comes equipped with a theory morphism $(\Theta',j')\to(\Theta,j)$ which \emph{preserves} and \emph{reflects} generic morphisms.\vspace{1ex}

It can be shown that homogeneous \emph{algebraic} theories are intimately related to \emph{symmetric operads}, the symmetries of the finite sets playing a prominent role here. We will certainly come back to this topic elsewhere, cf. Section \ref{sct:Gamma} below. In this article we are mainly concerned with the homogeneous theories associated to strongly cartesian monads, especially globular operads, cf. Sections \ref{dfn:cartesian} and \ref{dfn:omega-operads}.

\begin{thm}\label{thm:cartesian-homogeneous}Let~$\Ee$ be a finitely complete category with dense generator $\Aa$.

For any strongly cartesian monad $T$ the associated theory $(\Theta_T,j_T)$ with arities $\Aa_T$ (cf. \ref{thm:lra->arities}) is homogeneous. If the arities $\Aa_T$ have no non-trivial automorphisms, the equivalence \ref{thm:monad-theory} between monads and theories with arities $\Aa_T$ restricts to an equivalence between $T$-cartesian monads and $(\Theta_T,j_T)$-homogeneous theories.\end{thm}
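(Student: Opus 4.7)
The plan is to transport the strongly cartesian $T$-generic calculus of Section \ref{dfn:generic} to $\Theta_T$ via the Kleisli identification $\Theta_T(j_T A, j_T B) \cong \Ee(A, TB)$, under which $j_T(h) = \eta_B \circ h$ and the Kleisli composite $j_T(h) \cdot \psi$ equals $T(h) \circ \psi$ in $\Ee$. Under this dictionary, factorisations $\phi = j_T(h) \cdot \psi'$ in $\Theta_T$ correspond to factorisations $\phi = T(h) \circ \psi'$ in $\Ee$. The generic/free factorisation required by Definition \ref{dfn:homogeneous}(ii) is then supplied by Lemma \ref{lma:lra}(v): every $\phi : A \to TB$ factors as $T(h^*) \circ \psi^*$ with $\psi^*$ a $T$-generic morphism and, by the $\Aa_T$-closure property of Theorem \ref{thm:lra->arities}, $h^* \in \Aa_T$. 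The initiality of this factorisation in $\Fact{\Ee}{T}{\phi}$ delivers the essential uniqueness.

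For condition (i), I would observe that for cartesian $T$ each unit component $\eta_A$ is a monomorphism: the monad identity $\mu_A \circ T\eta_A = \mathrm{id}_{TA}$ exhibits $T\eta_A$ as a split mono, and the cartesian naturality square of $\eta$ realises $\eta_A$ as a pullback of $T\eta_A$, hence a monomorphism. Faithfulness of $j_T$ follows immediately from $j_T(h) = \eta_B \circ h$. Condition (iii), closure of generic morphisms under composition, is \cite[5.14]{W0} as recalled in the proof of Theorem \ref{thm:lra->arities}. For (iv), an invertible morphism in $\Theta_T$ is, by essential uniqueness, its own generic/free factorisation with both pieces forced to be invertible, hence both generic and free; conversely, a morphism that is both generic and free reduces to $T(h) \circ \eta_A$ with $h$ necessarily invertible in $\Aa_T$, hence invertible in $\Theta_T$.

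For the second assertion I would proceed as follows. Given a $T$-cartesian monad $S$, the cartesian monad morphism $\sigma : S \Rightarrow T$ forces $S$-generic factorisations to be obtained as pullbacks of $T$-generic factorisations (the observation from Section \ref{dfn:cartesian} also used in the proof of Theorem \ref{thm:lra->arities}), so the induced theory morphism $(\Theta_S, j_S) \to (\Theta_T, j_T)$ of Theorem \ref{thm:monad-theory} preserves and reflects generic morphisms, and is thus $(\Theta_T, j_T)$-homogeneous. Conversely, given a $(\Theta_T, j_T)$-homogeneous theory $(\Theta, j)$ with theory morphism $\theta$, I would lift via Theorem \ref{thm:monad-theory} to a monad morphism $R \Rightarrow T$ and establish cartesianness by reconstructing each $R$-generic factorisation as the pullback, along $\theta$, of the corresponding $T$-generic factorisation in $\Theta_T$. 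The rigidity assumption on $\Aa_T$ is essential at this point, because it promotes the essentially unique generic factorisation to a literally unique one, so that the pullback of a $T$-generic factorisation determines an $R$-generic factorisation without ambiguity from automorphisms of the intermediate arity. The main obstacle I expect is precisely this last step: translating the preservation and reflection of generics by $\theta$ into the pullback conditions on the naturality squares of $R \Rightarrow T$, $\mu_R$ and $\eta_R$ required for cartesianness, which demands a careful combination of the $T$-generic calculus with the rigidity of $\Aa_T$.
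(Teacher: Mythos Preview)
Your treatment of homogeneity and of the forward direction of the restricted equivalence is essentially the paper's: the Kleisli dictionary, Lemma~\ref{lma:lra}(v) for the factorisation, \cite[5.14]{W0} for closure under composition, and the pullback-of-generics observation from Section~\ref{dfn:cartesian} for preservation/reflection under $\Theta_\phi$ are exactly what the paper invokes. One small omission: you should verify that the morphisms you call generic (those represented by $T$-generics under the Kleisli identification) coincide with the generics of Definition~\ref{dfn:homogeneous} (those factoring only through invertible free morphisms); the paper notes this follows from the factorisation property itself.

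The genuine gap is the converse. You correctly locate the difficulty---turning preservation and reflection of generics by $\theta$ into cartesianness of the induced monad morphism---but your proposed mechanism (``reconstructing each $R$-generic factorisation as the pullback along $\theta$'') is not a proof, and you say as much. The paper does not attempt to verify cartesianness of $S\Rightarrow T$ on $\Ee$ directly. Instead it observes that, by the construction of the right adjoint $M$ in Theorem~\ref{thm:monad-theory}, it suffices to show that the associated monad morphism $j_S^*(j_S)_!\Rightarrow j_T^*(j_T)_!$ on $\PSh{\Aa_T}$ is cartesian. This is then isolated as a separate Proposition~\ref{prp:homogeneous->cartesian}: the rigidity of $\Aa_T$ makes the generic/free factorisations \emph{strictly} unique, so the pointwise formula for $(j_S)_!$ and $(j_T)_!$ yields, at each arity $A$, an explicit coproduct decomposition
\[
j_S^*(j_S)_!(X)(A)\;\cong\;\coprod_B\coprod_{\Theta_S^{\mathrm{gen}}(A,B)}X(B),
\]
and similarly for $T$. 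Because $\theta$ preserves and reflects generics, the horizontal maps in the relevant square are induced by the same index map on generic parts, whence the square is a pullback by inspection. This passage to the presheaf level and the explicit coproduct computation is the missing idea in your sketch.
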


\begin{proof}The functor $j_T:\Aa_T\to\Theta_T$ is faithful since it is the restricted free $T$-algebra functor, and cartesian monads are faithful. Recall that $\Theta_T$ is the subcategory of the Kleisli category $\Ee_T$ spanned by the free $T$-algebras on objects of $\Aa$. We \emph{define} the generic morphisms $g:TA\to TB$ of $\Theta_T$ to be those morphisms in $\Theta_T$ which correspond to $T$-generics $\tilde{g}:A\to TB$ (cf. \ref{dfn:generic}) under the well-known description of the Kleisli category $\Ee_T$ by such morphisms (i.e. $g=\mu_B T\tilde{g}$ with $\tilde{g}$ $T$-generic). In particular, the composite of two generic morphisms in $\Theta_T$ is again generic (cf. proof of \ref{thm:lra->arities}). Moreover, according to Lemma \ref{lma:lra}, any morphism in $\Theta_T$ factors in an essentially unique way as a generic morphism followed by a free morphism. This factorisation property guarantees that generic morphisms can only factor through free morphisms if the latter are invertible in $\Aa_T$. Finally, any isomorphism in $\Theta_T$ is generic, but also the image under $j_T$ of an isomorphism in $\Aa_T$, thus free.

Any $T$-cartesian monad $S$ has arities $\Aa_T$ and induces thus a theory $(\Theta_S,j_S)$ with arities $\Aa_T$. Because of the existence of $S$-generic factorisations (cf. proof of \ref{thm:lra->arities}) this theory is homogeneous; condition (i) is satisfied since cartesian monads are faithful. Moreover, the cartesian transformation $\phi:S\Rightarrow T$ induces a morphism of theories $\Theta_\phi:(\Theta_S,j_S)\to(\Theta_T,j_T)$ which preserves and reflects generic morphisms by \cite[5.10.2 and 5.11]{W0}, whence $(\Theta_S,j_S)$ is $(\Theta_T,j_T)$-homogeneous. Conversely, to any such theory corresponds a monad $S$ with arities $\Aa_T$ equipped with a canonical monad morphism $\phi:S\Rightarrow T$. It remains to be shown that the latter is cartesian. The way the right adjoint $M$ in the proof of \ref{thm:monad-theory} has been constructed implies that it is enough to show that the monad morphism $j_S^*(j_S)_!\Rightarrow j_T^*(j_T)_!$ is cartesian. This is a consequence of Proposition \ref{prp:homogeneous->cartesian} below.\end{proof}

\begin{prp}\label{prp:homogeneous->cartesian}For any $(\Theta_T,j_T)$-homogeneous theory $(\Theta_S,j_S)$ whose arities $\Aa_T$ admit no non-trivial automorphisms, the associated (cf. \ref{lma:theory-morphism}) monad morphism $j_S^*(j_S)_!\Rightarrow j_T^*(j_T)_!$ is cartesian.\end{prp}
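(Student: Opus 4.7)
The plan is to derive an explicit coproduct formula for both monads using the homogeneous factorisation and then verify cartesianness componentwise. By Lemma \ref{lma:theory-morphism}, the monad morphism $j_S^*(j_S)_!\Rightarrow j_T^*(j_T)_!$ arises from the theory morphism $\theta:\Theta_S\to\Theta_T$ with $\theta j_S = j_T$, which preserves and reflects generic morphisms since $(\Theta_S,j_S)$ is $(\Theta_T,j_T)$-homogeneous. Cartesianness asserts that every naturality square is a pullback; since pullbacks in $\PSh{\Aa_T}$ are computed objectwise, it suffices to verify this at each object $A\in\Aa_T$ in $\Sets$.

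For $i\in\{S,T\}$, homogeneity provides an essentially unique generic/free factorisation of every morphism in $\Theta_i$. The hypothesis that each $A\in\Aa_T$ has only the identity automorphism promotes this to a \emph{strictly} unique factorisation, so that there is a bijection
\[\Theta_i(j_i(A),j_i(B))\cong\bigsqcup_{C\in\Aa_T}\mathrm{Gen}_i(A,C)\times\Aa_T(C,B),\]
where $\mathrm{Gen}_i(A,C)$ denotes the set of generic morphisms $j_i(A)\to j_i(C)$ in $\Theta_i$. Substituting this into the coend formula for the left Kan extension $(j_i)_!$ and applying the co-Yoneda lemma gives the explicit description
\[j_i^*(j_i)_!(X)(A)\cong\bigsqcup_{C\in\Aa_T}\mathrm{Gen}_i(A,C)\times X(C).\]
Under this identification, the component at $A$ of $\phi_X$ is the coproduct of the maps $\theta\times\mathrm{id}_{X(C)}$, using that $\theta$ restricts to a map $\mathrm{Gen}_S(A,C)\to\mathrm{Gen}_T(A,C)$, while the action of $j_i^*(j_i)_!$ on $f:X\to Y$ is, at $A$, the coproduct of the maps $\mathrm{id}\times f_C$.

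Consequently, for any $f:X\to Y$ the naturality square at $A$ splits as a disjoint union, indexed by $C\in\Aa_T$, of squares whose horizontals are $\mathrm{id}\times f_C$ and whose verticals are $\theta\times\mathrm{id}$. Each such square is the cartesian product of the trivial pullback on $\theta:\mathrm{Gen}_S(A,C)\to\mathrm{Gen}_T(A,C)$ (with identity top and bottom) and the trivial pullback on $f_C:X(C)\to Y(C)$ (with identity sides); products of pullbacks are pullbacks. Since coproducts in $\Sets$ are stable under pullback, the whole naturality square is a pullback, establishing cartesianness. The chief obstacle, and the reason for the automorphism-free hypothesis on arities, is the strict decomposition of the hom-sets: without trivial automorphism groups on each $C\in\Aa_T$, the coproduct would be a coproduct of orbit sets, and one would have to check that the generic-morphism actions by these automorphism groups are intertwined by $\theta$ in a way compatible with the pullback claim, introducing a substantial additional layer of bookkeeping.
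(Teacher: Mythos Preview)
Your argument is correct and follows essentially the same route as the paper: both derive the coproduct decomposition $j_i^*(j_i)_!(X)(A)\cong\coprod_C\Theta_i^{\mathrm{gen}}(A,C)\times X(C)$ from the strict generic/free factorisation (which is where the automorphism-freeness is used) and then read off cartesianness componentwise in $\Sets$. The only cosmetic difference is that the paper first reduces, by the usual pullback-pasting argument, to naturality squares over the terminal map $X\to 1$, whereas you handle an arbitrary $f:X\to Y$ directly; the underlying computation is the same.
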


\begin{proof}It suffices to prove that for any presheaf $X$ on $\Aa_T$ the unique morphism $X\to 1$ to the terminal presheaf induces a cartesian square\begin{diagram}[small]j_S^*(j_S)_!(X)&\rTo&j_T^*(j_T)_!(X)\\\dTo&&\dTo\\j_S^*(j_S)_!(1)&\rTo&j_T^*(j_T)_!(1)\end{diagram}in $\PSh{\Aa_T}$. Since $\Aa_T$ has no non-trivial automorphisms, the generic/free factorisations in $\Theta_S$ and $\Theta_T$ are \emph{unique}. Therefore, the pointwise formulae for the left Kan extensions $(j_S)_!$ and $(j_T)_!$ imply that the square above, when evaluated at an arity $A$ of $\Aa_T$, is isomorphic to the square\begin{diagram}[small]\coprod_B\coprod_{\Theta_S^{gen}(A,B)}X(B)&\rTo&\coprod_B\coprod_{\Theta_T^{gen}(A,B)}X(B)\\\dTo&&\dTo\\\coprod_B\coprod_{\Theta_S^{gen}(A,B)}1(B)&\rTo&\coprod_B\coprod_{\Theta_T^{gen}(A,B)}1(B)\end{diagram}in which $\Theta^{gen}$ denotes the subcategory of generic morphisms of $\Theta$. The horizontal arrows are induced by the same map of ``indices'' since the theory morphism $(\Theta_S,j_S)\to(\Theta_T,j_T)$ preserves and reflects generic morphisms. Therefore, for each arity $A$, the latter square is cartesian, so that the former is cartesian as well.\end{proof}

\subsection{Globular operads as $\Theta_\omega$-homogeneous theories}\label{sct:Theta}--\vspace{1ex}

A \emph{globular theory} is defined to be a theory on globular sets with arities $\Theta_0$, i.e. the canonical arities of the free $\omega$-category monad $D_\omega$, cf. Section \ref{dfn:omega-operads}. This definition of a globular theory is more restrictive than the one adopted in \cite[1.5]{Be} (resp. \cite[2.1.1]{A}), cf. Remark \ref{rmk:Paul-Andre}. However, it follows essentially from Lemma \ref{lma:useful} and Proposition \ref{prp:homogeneous->cartesian} that Definition \ref{dfn:homogeneous} of a homogeneous globular theory is equivalent to \cite[1.15]{Be} (resp. \cite[2.2.6/2.7.1]{A}).

The homogeneous globular theory associated to $D_\omega$ will be denoted by $(\Theta_\omega,j_\omega)$. The category $\Theta_\omega$ is dual to Joyal's \cite{J} category of finite combinatorial $\omega$-disks, cf. \cite[2.2]{Be}; in particular, the presheaf category $\PSh{\Theta_\omega}$ is a \emph{classifying topos} for combinatorial $\omega$-disks, cf. \cite[3.10]{Be2}. It follows from \cite[1.3]{Be} that the canonical arities $\Theta_0$ do not contain any non-trivial automorphisms. Therefore, Theorem \ref{thm:cartesian-homogeneous} implies

\begin{thm}\label{thm:globularoperads}There is a canonical equivalence between the category of Batanin's $\omega$-operads and the category of $(\Theta_\omega,j_\omega)$-homogeneous theories.\end{thm}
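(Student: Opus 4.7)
The plan is to obtain the theorem as a direct instance of Theorem \ref{thm:cartesian-homogeneous} applied to the free $\omega$-category monad $T = D_\omega$ on globular sets. All the ingredients have been assembled in the preceding sections, so the proof will reduce to checking the hypotheses of Theorem \ref{thm:cartesian-homogeneous} and to translating between $D_\omega$-cartesian monads and Batanin's $\omega$-operads.

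First, I would recall from Section \ref{dfn:omega-operads} (citing Batanin \cite{Ba}, Street \cite{St} and Leinster \cite{L0}) two facts: that $D_\omega$ is a strongly cartesian monad whose canonical arities $\Aa_{D_\omega}$ are the category $\Theta_0$ of globular pasting diagrams, and that the category of Batanin's $\omega$-operads is equivalent to the category of $D_\omega$-cartesian monads on globular sets via the correspondence $S \mapsto (S1 \to D_\omega 1)$ recalled in Section \ref{dfn:cartesian}. By the construction preceding the statement, the associated homogeneous theory $(\Theta_{D_\omega}, j_{D_\omega})$ arising from Theorem \ref{thm:cartesian-homogeneous} is by definition $(\Theta_\omega, j_\omega)$.

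Second, I would verify the remaining hypothesis of Theorem \ref{thm:cartesian-homogeneous}, namely that $\Theta_0$ admits no non-trivial automorphisms. This is precisely the content of \cite[1.3]{Be}, as quoted in the paragraph preceding the statement. With this in place, Theorem \ref{thm:cartesian-homogeneous} yields a canonical equivalence between the category of $D_\omega$-cartesian monads and the category of $(\Theta_\omega, j_\omega)$-homogeneous theories. Composing this with the identification of $\omega$-operads with $D_\omega$-cartesian monads produces the desired equivalence.

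There is no genuine obstacle here: the substantive work (the existence of generic/free factorisations in $\Theta_{D_\omega}$, the homogeneity of the induced theory, and the cartesianness of the monad morphism $j_S^*(j_S)_! \Rightarrow j_T^*(j_T)_!$ attached to a homogeneous sub-theory) has already been performed in Theorem \ref{thm:cartesian-homogeneous} and Proposition \ref{prp:homogeneous->cartesian}. The only point worth making explicit is that the two equivalences being composed (operads $\leftrightarrow$ $D_\omega$-cartesian monads, and $D_\omega$-cartesian monads $\leftrightarrow$ $(\Theta_\omega, j_\omega)$-homogeneous theories) are both natural in morphisms, so their composite is itself an equivalence of categories.
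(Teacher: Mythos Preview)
Your proposal is correct and follows exactly the paper's approach: the paper states the theorem immediately after noting that $\Theta_0$ has no non-trivial automorphisms (citing \cite[1.3]{Be}) with the words ``Therefore, Theorem \ref{thm:cartesian-homogeneous} implies'', so the proof is precisely the application of Theorem \ref{thm:cartesian-homogeneous} to $T=D_\omega$ that you describe. Your additional remark about composing with the identification of $\omega$-operads as $D_\omega$-cartesian monads (from Section \ref{dfn:cartesian}) is implicit in the paper's framing and is appropriate to make explicit.
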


This equivalence can be deduced from \cite[1.16]{Be}, where however, as pointed out to us by Dimitri Ara, the augmentation over $(\Theta_\omega,j_\omega)$ has not been mentioned explicitly. Nevertheless, this augmentation is constructed in course of proving \cite[1.16(ii)$\Rightarrow$(iii)]{Be}, and used in proving \cite[1.16(iii)$\Rightarrow$(i)]{Be}. For a proof ``from scratch'' of this equivalence, we refer the reader to Ara's PhD thesis, especially \cite[6.6.8]{A}.

It is remarkable that the homogeneous $n$-globular theories $(\Theta_n,j_n)$ associated to the free $n$-category monads $D_n$ \emph{filter} $(\Theta_\omega,j_\omega)$ in a combinatorially transparent way. Indeed, using the wreath-product construction described in \cite[3.1/4]{Be2}, it is readily verified that, as well the canonical arities $\Theta_{n,0}$, as well the theories $\Theta_n$, satisfy the following recursion rule (see Section \ref{dfn:omega-operads} for notation):$$\Theta_{n+1,0}=\Theta_{1,0}\wr\Theta_{n,0}\quad\text{and}\quad\Theta_{n+1}=\Theta_1\wr\Theta_n\quad(n>0)$$ in a way that is compatible with the arity-inclusion functors $j_n:\Theta_{n,0}\to\Theta_n$, and yielding in the colimit the arity-inclusion $j_\omega:\Theta_0\to\Theta_\omega$. Since $\Theta_1$ is isomorphic to the simplex category $\Delta$, this illustrates the intricate relationship between higher categorical and higher simplicial structures.

\subsection{Symmetric operads as $\Gamma$-homogeneous theories}\label{sct:Gamma}--\vspace{1ex}

The notion of homogeneous theory has applications outside the context of cartesian monads. As illustration we discuss the homogeneous theories associated to \emph{symmetric operads} on sets. A fundamental role is played by the algebraic theory $\Theta_{\com}$ of \emph{commutative monoids} since the latter corresponds to the terminal symmetric operad. The free commutative monoid on $n$ elements is given by the $n$-th power $\NN^n$ of the additive monoid of natural numbers. The algebraic theory $(\Theta_\com,j_\com)$ is thus the full subcategory of commutative monoids spanned by these powers $\NN^n$, with evident arity-inclusion functor $j_\com$. This algebraic theory is \emph{homogeneous} in the sense of Definition \ref{dfn:homogeneous}, where a homomorphism $\NN^m\to\NN^n$ is \emph{generic} if it takes each generator of $\NN^m$ to a sum of generators of $\NN^n$ in such a way that every generator of the target appears \emph{exactly} once. Indeed, any homomorphism $\NN^m\to\NN^n$ factors in an essentially unique way as a generic followed by a free homomorphism.

Segal's category $\Gamma$ \cite{Se} can be realised as a subcategory of $\Theta_\com$, having same objects but only those homomorphisms $\NN^m\to\NN^n$ which take each generator to a sum of generators in such a way that target generators appear \emph{at most} once. The generic/free factorisation system of $\Theta_\com$ restricts to a generic/free factorisation system of $\Gamma$, in which the generic part $\Gamma_\gen$ is the same as the generic part of $\Theta_\com$, while the free part $\Gamma_0$ consists just of those free homomorphisms which are induced by \emph{injective} set mappings. This is sufficient to recover commutative monoids as those presheaves on $\Gamma$ which are sheaves on $\Gamma_0$ with respect to the evident (induced) Grothendieck topology on $\Gamma_0$. The passage from $\Theta_\com$ to $\Gamma$ can be interpreted as an elimination of those universal operations (acting on commutative monoids) which involve \emph{diagonals}. The presheaf category $\widehat{\Gamma}$ is a classifying topos for \emph{pointed objects}; in particular, Segal's category $\Gamma$ is dual to a skeleton of the category of finite pointed sets. Up to this duality, our generic/free factorisation system coincides with Lurie's active/inert factorisation system \cite{Lu}.

A \emph{$\Gamma$-homogeneous theory} in the sense of Definition \ref{dfn:homogeneous} is a pair of bijective-on-objects functors $\Gamma_0\overset{j}{\to}\Gamma_A\overset{q}{\to}\Gamma$ such that $j^*j_!$ preserves $\Gamma_0$-sheaves and such that $q$ preserves and reflects generic morphisms for a given generic/free factorisation system of $\Gamma_A$. This data determines, and is up to isomorphism uniquely determined by, a \emph{symmetric operad} $A=(A(n))_{\,n\geq 0}$ where $A(n)$ corresponds to the set of generic morphisms from $1$ to $n$ in $\Gamma_A$. In order to reconstruct the theory $\Gamma_A$ from the operad $A$ one uses the canonical isomorphisms $\Gamma_A(m,n)\cong\Gamma_A(1,n)^m$ as well as the generic/free factorisation system of $\Gamma_A$. We refer the reader to \cite{Lu} for more details, where this presentation of symmetric operads is the basis for a suitable weakening of the notion of symmetric operad itself.

Batanin's \emph{symmetrisation functors} \cite{Ba2} can also be understood along these lines. Indeed, a truncated version of Theorem \ref{thm:globularoperads} yields an equivalence between globular $n$-operads and $(\Theta_n,j_n)$-homogeneous theories. In \cite[3.3]{Be2}, a sequence of functors $\gamma_n:\Theta_n\to\Gamma$ is constructed, extending in a natural way Segal's functor $\gamma_1:\Delta\to\Gamma$, and having the property to preserve and reflect generic morphisms. In particular, pullback along $\gamma_n$ converts $\Gamma$-homogeneous theories (i.e. symmetric operads) into $\Theta_n$-homogeneous theories (i.e. globular $n$-operads). These pullback functors coincide up to isomorphism with Batanin's desymmetrisation functors. The symmetrisation functors are defined to be their left adjoints. The explicit construction of these symmetrisation functors is important for the theory of (topological) $E_n$-operads as follows from Batanin's work. Our formulation by means of homogeneous theories sheds some light onto Batanin's formula \cite[13.1]{Ba2}.

\section{The free groupoid monad.}

\subsection{Overview}
The category $\Graph$ of graphs, which is the category of presheaves on the category $\GraphSite$
\[ \xygraph{*=(0,0){\xybox{\xygraph{{0}="l" [r] {1}="r" "l":@<1ex>"r"^-{\sigma} "l":@<-1ex>"r"_-{\tau}}}}} \]
admits an involution $(-)^{\op}:\Graph \to \Graph$. Given a graph $X$, $X^{\op}$ has the same vertices as $X$, and an edge $a \to b$ in $X^{\op}$ is by definition an edge $b \to a$ in $X$. An \emph{involutive graph} is a pair $(X,\iota)$ consisting of a graph $X$, together with a graph morphism $\iota:X^{\op} \to X$ called the involution, which satisfies $\iota^{\op}\iota=1_X$. Involutive graphs form a category $\InvGraph$, with a map $f:(X,\iota_1) \to (Y,\iota_2)$ being a graph morphism $f$ such that $f\iota_1=\iota_2f^{\op}$. In terms of sets and functions an involutive graph amounts to: sets $X_0$ (of vertices) and $X_1$ (of edges), and functions $s,t:X_1 \to X_0$ (the source and target) and $\iota:X_1 \to X_1$ (the involution) such that $\iota^2=1_{X_1}$ and $s\iota=t$, and so $\InvGraph$ is the category of presheaves on the category $\InvGraphSite$
\[ \xygraph{*=(0,0){\xybox{\xygraph{{0}="l" [r] {1}="r" ([l(2)u(1.5)] {}="c1", [l(2)d(1.5)] {}="c2") "r":@( {"c1";"r"}, {"c2";"r"})"r"_{\iota} "l":@<1ex>"r"^-{\sigma} "l":@<-1ex>"r"_-{\tau}}}}} \]
in which $\iota^2=\id$ and $\iota\sigma=\tau$. Given an edge $f:a \to b$ in an involutive graph, we shall refer to the edge $\iota(f):b \to a$ as the \emph{dual} of $f$.

Left Kan extension and restriction along the identity-on-objects inclusion $k:\GraphSite \to \InvGraphSite$ can be described as follows. Restriction $k^*$ is the forgetful functor $\InvGraph \to \Graph$. Its left adjoint $k_!$ can be obtained on objects as a pushout
\[ \xygraph{{Z_0}="tl" [r] {Z^{\op}}="tr" [d] {k_!Z}="br" [l] {Z}="bl" "tl":"tr"^-{}:"br"^-{}:@{<-}"bl"^-{}:@{<-}"tl"^-{} "br" [u(.3)l(.15)] :@{-}[l(.15)]:@{-}[d(.15)]} \]
(over the set of its vertices) of the original graph $Z$ and its dual. In particular for each natural number $n$ the graph $k_![n]$ has object set $\{0,...,n\}$ and a unique edge $i \to j$ when $|i-j|=1$. For instance the involutive graph $k_![3]$ looks like this:
\[ \xygraph{{0}="p1" [r] {1}="p2" [r] {2}="p3" [r] {3.}="p4" "p1":@<1ex>"p2":@<1ex>"p3":@<1ex>"p4":@<1ex>"p3":@<1ex>"p2":@<1ex>"p1"} \]
We shall denote by $\ca Seq$ the full subcategory of $\InvGraph$ consisting of the \emph{finite sequences}, that is to say, the involutive graphs of the form $k_![n]$.

More generally any graph in the combinatorialists' sense, namely a set $X$ equipped with a symmetric relation $R$, has an associated involutive graph: the vertices are the elements of $X$ and there is a unique edge $a \to b$ iff $(a,b) \in R$. In particular we shall consider the full subcategory $\Acyc$ of $\InvGraph$ consisting of the \emph{finite connected acyclic graphs}. Of course $\Acyc$ contains $\ca Seq$. However the following figures
\[ \xygraph{{\xybox{\xygraph{{0}="l" [r] {1}="m" [r] {2}="r" [dl] {3}="b" "l":@{-}"m"^-{}(:@{-}"r"^-{},:@{-}"b")}}}
[r(4)] {\xybox{\xygraph{{0}="l" [r] {1}="m" [r] {2}="r" [dl] {3}="b" "l":@<1ex>"m"^-{}:@<1ex>"r"^-{} "r":@<1ex>"m"^-{}:@<1ex>"l"^-{} "m":@<1ex>"b"^-{}:@<1ex>"m"^-{}}}}} \]
describe an object of $\Acyc$ (represented on the left as a combinatorial graph and on the right as an object of $\InvGraph$) which is not isomorphic in $\InvGraph$ to any finite sequence.

Every \emph{groupoid} has an underlying involutive graph. Its vertices and edges are the objects and morphisms of the groupoid, and the involution is given by $f \mapsto f^{-1}$. This is the object part of a monadic forgetful functor $\Gpd \to \InvGraph$ from the category of groupoids to that of involutive graphs, and we shall denote the corresponding monad -- the \emph{free groupoid monad} --  by $G$. The purpose of this section is to describe arities for $G$, from which an application of the nerve theorem \ref{T2} recovers the basic aspects of the symmetric simplicial nerve of a groupoid. We begin our analysis of $G$ by pointing out that it is not cartesian.

\begin{prp}
The functor $G:\InvGraph \to \InvGraph$ does not preserve pullbacks.
\end{prp}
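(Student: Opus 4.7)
The plan is to exhibit a single explicit pullback in $\InvGraph$ that $G$ fails to preserve. The most economical candidate I would try is the product $X \times X$, where $X = k_![1]$ is the involutive graph with two vertices $0,1$ and one non-loop edge $f:0\to 1$ together with its dual $\iota f:1\to 0$.

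First I would compute $X \times X$ directly in $\InvGraph$. Since products in a presheaf category are pointwise, the vertex set is $\{0,1\}\times\{0,1\}$ and the edge set consists of the four pairs $(f,f),(f,\iota f),(\iota f,f),(\iota f,\iota f)$, with $\iota$ acting diagonally. A quick source/target inspection shows that $X\times X$ splits into two connected components, each isomorphic to $X$: the component $\{(0,0),(1,1)\}$ carries the edge $(f,f)$ and its dual, while the component $\{(0,1),(1,0)\}$ carries the edge $(f,\iota f)$ and its dual. In particular $X \times X \cong X\sqcup X$.

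Next I would compute both sides of the comparison map. The free groupoid $G(X)$ is the ``walking isomorphism'', i.e.\ the groupoid with two objects and a unique iso between them; as an involutive graph it has $2$ vertices and $4$ edges $\{\id_0,\id_1,f,f^{-1}\}$. Since $G$ arises from a left adjoint followed by a coproduct-preserving forgetful functor, $G(X\times X)\cong G(X)\sqcup G(X)$ has $4$ vertices and $8$ edges. On the other hand, the product $G(X)\times G(X)$ in $\InvGraph$, computed pointwise, has $4$ vertices and a full $4\times 4 = 16$ edges, because source/target constraints are automatic in a product of involutive graphs; in particular it contains the ``mixed'' edge $(\id_0, f):(0,0)\to (0,1)$.

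Finally I would observe that the canonical comparison $G(X\times X)\to G(X)\times G(X)$ induced by $G(\pi_1)$ and $G(\pi_2)$ cannot be an isomorphism: the edge $(\id_0,f)$ joins vertices $(0,0)$ and $(0,1)$ which lie in distinct connected components of $G(X\times X)\cong G(X)\sqcup G(X)$, hence no morphism between them exists there and this edge is not in the image. The only real obstacle is the bookkeeping in computing $X\times X$ carefully; once one sees that the product decomposes as a disjoint union while the product of the freely completed groupoids does not, the failure of preservation is immediate.
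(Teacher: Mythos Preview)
Your argument is correct, but it takes a different route from the paper's. The paper works with one-vertex graphs: it lets $E$ be the involutive graph with a single vertex and a pair of dual loop edges, so that $GE=\Z$, $G1=\Z_2$, and $G(E\times E)$ is the free group on two generators; since any pullback of abelian groups is abelian while $G(E\times E)$ is not, the square $G(E\times E)\to GE\rightrightarrows G1$ fails to be a pullback. Your example is instead two-vertex and loop-free: you exploit that $k_![1]\times k_![1]$ is \emph{disconnected} (two copies of $k_![1]$), so that $G(k_![1]\times k_![1])$ is disconnected, while $Gk_![1]\times Gk_![1]$ is the chaotic groupoid on four objects and hence connected. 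Your approach is arguably more elementary---it needs only a connected-component count rather than any group-theoretic fact---and it dovetails nicely with the paper's later observation that $Gk_![m]$ is chaotic. The paper's example, on the other hand, pinpoints the failure as a genuinely non-cartesian phenomenon even over a single object, and gives a concrete algebraic obstruction (non-abelianness) rather than a topological one.
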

\begin{proof}
The terminal involutive graph $1$ has one vertex and one edge which is necessarily its own dual. Since a morphism of involutive graphs $1 \to X$, where $X$ is a groupoid, is the same thing as an involution in $X$, it follows that $G1$ is $\Z_2$ regarded as a one object groupoid. Denote by $E$ the involutive graph with one vertex and two edges which are dual to each other. Then a morphism of involutive graphs $E \to X$, where $X$ is a groupoid, is the same thing as an arbitrary endomorphism in $X$, and so $GE$ is the group $\Z$ regarded as a one object groupoid. Write $P$ for the involutive graph obtained as the kernel pair of the unique map $E \to 1$, that is, $P = E \times E$. Thus $P$ has one vertex and 4 edges -- two pairs of dual edges. Since a morphism $P \to X$ for $X$ a groupoid amounts to a pair of endomorphisms on the same object in $X$, $GP$ is the free group on 2 generators regarded as a one object groupoid. To say that the pullback defining $P$ is preserved by $G$ is equivalent to saying that the induced square
\[ \xygraph{{GP}="tl" [r] {\Z}="tr" [d] {\Z_2}="br" [l] {\Z}="bl" "tl":"tr"^-{}:"br"^-{}:@{<-}"bl"^-{}:@{<-}"tl"^-{}} \]
of groups is a pullback in the category of groups. But this cannot be since a pullback (in fact any limit) of abelian groups is itself abelian, whereas $GP$ is not abelian.
\end{proof}

\subsection{The monad for involutive categories}\label{sec:iCat-monad}
Our quest to understand the monad $G$ begins with an understanding of a closely related monad $T$ on $\InvGraph$. The algebras of $T$ are \emph{involutive categories} (aka \emph{dagger categories}). As we shall see, the monad $T$ is strongly cartesian, its canonical arities are given by $\ca Seq$, and $T$ possesses a system of idempotents (related to the fact that groupoids form an epireflective subcategory of $\InvGraph^T$) which will enable us to obtain $G$ from $T$.

Let us denote by $(D_1,\mu,\eta)$ the monad on $\Graph$ whose algebras are categories. The graph $D_1X$ has the same vertices as $X$, and an edge $a \to b$ in $D_1X$ is a path $a \to b$ in $X$. The unit $\eta_X:X \to D_1X$ picks out the paths of length $1$, and the multiplication $\mu_X:D_1^2X \to D_1X$ is described by the concatenation of paths. The free category monad $D_1$ on graphs \emph{lifts} to the category of involutive graphs, thereby inducing the free involutive category monad $T$. One defines on objects $T(X,\iota) = (D_1X,D_1\iota)$, and the arrow map of $T$, as well as the unit and the multiplication of $T$, are inherited in an evident way from $D_1$. More abstractly, there is a canonical \emph{distributive law} (in the sense of Beck) $k^*k_!D_1\Rightarrow D_1k^*k_!$ which induces the aforementioned lifting of $D_1$ to $\Graph^{k^*k_!}=\InvGraph$. We shall now explain how the canonical arities of $T$ are obtained from those of $D_1$.
\begin{prp}\label{prop:free-arities}Let $k:\CC\to\DD$ be a bijective-on-objects functor, and let $S$ and $T$ be a monads on~$\PSh{\CC}$ and~$\PSh{\DD}$ such that $k^*T=Sk^*$ fulfilling identities (\ref{dfn:monad-morphism}). If $S$ is strongly cartesian, then $T$ is strongly cartesian, and the canonical arities for $T$ are obtained from those for $S$ by left Kan extension along $k$ (cf. \ref{thm:lra->arities} and \ref{rmk:canonical arities}).\end{prp}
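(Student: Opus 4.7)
The plan is to exploit two facts: since $k$ is bijective on objects, the restriction functor $k^*:\PSh{\DD}\to\PSh{\CC}$ is conservative, and since it has both adjoints it preserves all limits and colimits; combined, it preserves \emph{and} reflects pullbacks. Moreover the strict identity $k^*T=Sk^*$, together with the adjunction $k_!\dashv k^*$, will allow us to transport $S$-generic factorisations to $T$-generic ones.

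Cartesianness of $T$ follows formally. Applying $k^*$ to the image under $T$ of a pullback square in $\PSh{\DD}$ yields the image under $Sk^*$ of the same square, which is a pullback because $k^*$ preserves pullbacks and $S$ is cartesian; since $k^*$ also reflects pullbacks, the $T$-image was already a pullback. The monad-morphism identities for $(k^*,\id):(\PSh{\DD},T)\to(\PSh{\CC},S)$ give $k^*\eta^T=\eta^Sk^*$ and $k^*\mu^T=\mu^Sk^*$, so the naturality squares of $\eta^T$ and $\mu^T$ are sent by $k^*$ to the (cartesian) naturality squares of $\eta^S$ and $\mu^S$, and the same reflection argument shows they are cartesian.

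The core of the argument is to produce, for each $f:\DD(-,d)\to TX$, an initial object of $\Fact{\PSh\DD}{T}{f}$. Let $c$ be the unique object of $\CC$ with $k(c)=d$ and transpose $f$ under $k_!\dashv k^*$ to $\tilde f:\CC(-,c)\to k^*TX=Sk^*X$. Since $S$ is strongly cartesian, Lemma~\ref{lma:lra} gives an initial $S$-generic factorisation $\tilde f=S\tilde h\cdot\tilde g$ with $\tilde g:\CC(-,c)\to SA$ and $\tilde h:A\to k^*X$. Let $h:k_!A\to X$ be the transpose of $\tilde h$ and define $g:\DD(-,d)\to Tk_!A$ as the transpose of $S\eta^k_A\cdot\tilde g:\CC(-,c)\to Sk^*k_!A=k^*Tk_!A$, where $\eta^k$ is the unit of $k_!\dashv k^*$. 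A direct calculation using the triangle identity yields $Th\cdot g=f$. To verify that $g$ is $T$-generic, observe that any diagram witnessing an attempted lift against $g$ is sent by $k^*$ to a diagram to which the $S$-genericity of $\tilde g$ applies, producing a unique mediating $\tilde\delta$ whose transpose under $k_!\dashv k^*$ provides the mediator on the $T$-side; uniqueness also transports across the adjunction. Lemma~\ref{lma:lra} then shows $T$ is a local right adjoint, hence strongly cartesian.

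The main obstacle is precisely this transport of the initiality/genericity property across the adjunction, since it involves juggling transposes, the strict identity $k^*T=Sk^*$ and the unit $\eta^k$. Once this is carried out, the description of the canonical arities is immediate: by construction and by uniqueness of initial objects in factorisation categories, every $T$-generic morphism out of a representable of $\DD$ is isomorphic to one of the form $g:\DD(-,k(c))\to Tk_!A$ above, with $A$ the target of an $S$-generic morphism out of $\CC(-,c)$; conversely, every canonical arity $A$ of $S$ gives rise in this way to a canonical arity $k_!A$ of $T$. Hence the canonical arities for $T$ are, up to isomorphism, exactly the image of the canonical arities for $S$ under left Kan extension along $k$.
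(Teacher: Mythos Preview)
Your argument is essentially correct but follows a different route from the paper, and there is one small gap worth flagging.

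The paper does not construct $T$-generic factorisations at all. Instead it invokes the characterisation (valid between presheaf categories, via Freyd's adjoint functor theorem) of local right adjoints as connected-limit-preserving functors: since $k$ is bijective on objects, $k^*$ is monadic and therefore \emph{creates} all limits; combined with $k^*T=Sk^*$ and the fact that $S$ preserves connected limits, this forces $T$ to preserve them too, so $T$ is a local right adjoint. The arities are then identified by a single commutative-up-to-iso diagram obtained by taking left adjoints in the equation $k^*_{T1}\,T_1=S_1\,k^*$, giving $L_1^T\,k_!\cong k_!\,L_1^S$ on slices. Your approach instead transports $S$-generic factorisations across the adjunction $k_!\dashv k^*$ and verifies genericity by hand; this is more explicit and has the pleasant feature that the identification of the $T$-arities as $k_!$ of the $S$-arities falls out of the construction rather than requiring a separate diagram chase.

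The gap is in your appeal to Lemma~\ref{lma:lra}. That lemma, as stated, requires generic factorisations (equivalently, initial objects of $\Fact{\PSh\DD}{T}{f}$) for \emph{all} domains $B$, whereas you only treat representable $B=\DD(-,d)$. In a presheaf setting this does suffice, but it needs an extra sentence: since $\PSh\DD/T1\simeq\PSh{y_\DD/T1}$, the functor $T_1$ has a left adjoint as soon as each representable of $\PSh{y_\DD/T1}$ (i.e.\ each $f:\DD(-,d)\to T1$) admits a reflection, and one extends $L_1$ to arbitrary objects by left Kan extension along Yoneda. Alternatively, and closer to the paper, you could simply observe that your pullback-reflection argument works verbatim for arbitrary connected limits (since $S$, being strongly cartesian, preserves all of them), which already yields that $T$ is a local right adjoint and renders the hand-built generic factorisations useful solely for identifying the arities.
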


\begin{proof}By Freyd's adjoint functor theorem, a functor between presheaf categories is a local right adjoint if and only if it preserves connected limits. Note that since $k$ is bijective on objects, $k^*$ is monadic and so creates all limits. Thus since $k^*T=Sk^*$ and $k^*$ creates connected limits, $T$ preserves them since $S$ does, and so the endofunctor $T$ is also a local right adjoint. Moreover the naturality squares of $T$'s unit and multiplication are cartesian, since: (1) those of $S$ are, (2) the monad $T$ is a lifting of the monad $S$, and (3) $k^*$ reflects pullbacks.

The canonical arities (cf. \ref{rmk:canonical arities}) for $S$ and $T$ appear as the essential image of the top and bottom horizontal composite functors in
\[ \xygraph{!{0;(3,0):(0,.5)::}
{y_{\CC}/S1}="tl" [r(.8)] {\PSh{y_{\CC}/S1}}="tm1" [r(.2)] {\catequiv}="tm2" [r(.2)] {\PSh{\CC}/S1}="tm3" [r(.8)] {\PSh{\CC}}="tr" [d] {\PSh{\DD}}="br" [l(.8)] {\PSh{\DD}/T1}="bm3" [l(.2)] {\catequiv}="bm2" [l(.2)] {\PSh{y_{\DD}/T1}}="bm1" [l(.8)] {y_{\DD}/T1}="bl"
"tl":"tm1"^-{\tn{yoneda}} "tm3":"tr"^-{L^S_1}:"br"^-{k_!}:@{<-}"bm3"^-{L^T_1} "bm1":@{<-}"bl"^-{\tn{yoneda}}:@{<-}"tl"^-{} "tm1":"bm1" "tm3":"bm3"
"tl":@{}"bm1"|{\iso} "tm1":@{}"bm3"|{=} "tm3":@{}"br"|{\iso}
} \]
where $L^S_1 \ladj S_1$ and $L^T_1 \ladj T_1$.  The vertical arrows are all induced by left Kan extension $k_!:\PSh{\CC}\to\PSh{\DD}$, where we use that $S1=Sk^*1=k^*T1$ so that we have a canonical morphism $k_!S1=k_!k^*T1\to T1$. It follows then that the left square pseudo-commutes because left Kan extensions preserve representables, the middle square commutes on the nose, and the right square pseudo-commutes by taking left adjoints of the functors participating in the equation $k^*_{T1}T_1=S_1k^*$.\end{proof}
\noindent Applying Proposition \ref{prop:free-arities} to the inclusion $k:\GraphSite \to \InvGraphSite$ gives
\begin{cor}\label{cor:S-ar-4-T}The involutive category monad $T$ has canonical arities $\ca Seq$.\end{cor}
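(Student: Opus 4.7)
The plan is to deduce the corollary by a direct application of Proposition \ref{prop:free-arities} with the bijective-on-objects inclusion $k:\GraphSite\inc\InvGraphSite$, the monad $S=D_1$ on $\Graph=\PSh{\GraphSite}$ (the free category monad), and the monad $T$ on $\InvGraph=\PSh{\InvGraphSite}$ (the free involutive category monad). Two hypotheses must be checked: (i) $D_1$ is strongly cartesian, and (ii) $k^*T=D_1k^*$ together with the monad morphism identities of Section \ref{dfn:monad-morphism}. Both have essentially been established above: (i) is the $n=1$ case of the discussion in Section \ref{dfn:omega-operads}, where the canonical arities of $D_1$ are identified with $\Theta_{1,0}$, the full subcategory of $\Graph$ spanned by finite directed edge-paths $[n]$; and (ii) is exactly the content of the distributive law $k^*k_!D_1\Rightarrow D_1k^*k_!$ mentioned just before Proposition \ref{prop:free-arities}, which is what produces $T$ as a lifting of $D_1$ along the monadic forgetful functor $k^*:\InvGraph\to\Graph$.

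Once these hypotheses are in place, Proposition \ref{prop:free-arities} tells us that $T$ is strongly cartesian, and that its canonical arities $\InvGraphSite_T$ are obtained as the essential image (under $k_!$) of the canonical arities $\GraphSite_{D_1}=\Theta_{1,0}$ of $D_1$. So it remains only to identify $k_![n]$, for the finite directed edge-path $[n]$, with the involutive graph introduced in the paper as $k_![n]$: by the explicit pushout formula for $k_!$ recalled above (in which $Z$ and its dual $Z^{\op}$ are glued along their common vertex set), the value $k_![n]$ has vertex set $\{0,\dots,n\}$ and, for each adjacent pair $(i,i{+}1)$, one edge $i\to i{+}1$ together with its dual $i{+}1\to i$, giving exactly the finite sequence of length $n$.

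Thus the essential image of $k_!$ restricted to $\Theta_{1,0}$ is precisely the full subcategory $\ca Seq$ of $\InvGraph$ whose objects are the $k_![n]$ for $n\geq 0$, which is what the corollary asserts. Since the real work is carried out by Proposition \ref{prop:free-arities} and the identification of $\Theta_{1,0}$ as the canonical arities of $D_1$, the only potential obstacle is the verification of the distributive law and the compatibility identities $k^*T=D_1k^*$; but this is exactly the standard fact that lifting a monad along a monadic functor is equivalent to giving such a distributive law, so no substantive new computation is required.
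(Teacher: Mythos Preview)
Your proof is correct and follows exactly the same route as the paper: the corollary is obtained by applying Proposition~\ref{prop:free-arities} to the inclusion $k:\GraphSite\inc\InvGraphSite$, with $S=D_1$ and $T$ the lifted monad, and then identifying $k_!(\Theta_{1,0})$ with $\ca Seq$. Your additional elaboration of the hypotheses (strong cartesianness of $D_1$, the lifting $k^*T=D_1k^*$ via the distributive law, and the explicit description of $k_![n]$) simply unpacks what the paper leaves implicit in its one-line derivation.
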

\begin{rmk}\label{rmk:otherarities}There are other arities for $T$ that are worth considering, in particular we shall see now that $\Acyc$ also endows $T$ with arities. For this we show first that a $T$-generic morphism $g:B \to TA$ exhibits the involutive graph $A$ as a refinement of $B$, obtained by subdividing the edges of $B$ into paths, cf. \cite[2.5]{W}. Formally one has the adjunction
\[ \xygraph{{\InvGraph}="l" [r(3)] {\InvGraph/T1}="r" "l":@<-1.2ex>"r"_-{T_1}|-{}="b":@<-1.2ex>"l"_-{L_1}|-{}="t" "t":@{}"b"|{\perp}} \]
and $g$ is recovered by setting $f=T(t_A)g$ and then looking at the component of the unit of this adjunction at $f$.  Thus the following more explicit description of this adjunction is useful. Note that $T1$ has one object, an edge for each $n \in \N$, and its involution is the identity. An object of $\InvGraph/T1$ may be regarded as an involutive graph whose edges are labelled by natural numbers, such that dual edges have the same label. In this way a morphism of $\InvGraph/T1$ is a label-preserving morphism of involutive graphs. The functor $L_1$ sends a given labelled involutive graph $X$ to the involutive graph $Y$ obtained from $X$ by replacing each dual pair of edges labelled by $n$ by a path of length $n$, that is to say, by a copy of $k_![n]$. When $n$ is zero this amounts to identifying the source and target, when $n=1$ this results in no change, and for larger $n$ one must add new intermediate vertices. The unit of $L_1 \ladj T_1$ is the labelled involutive graph morphism $X \to TY$ which sends each dual pair of edges of $X$ to the path in $Y$ which replaces it in the construction of $Y$. In general a morphism $g:B \to TA$ amounts to a function $g_0:B_0 \to A_0$, and an assignation of a path $g_0b_1 \to g_0b_2$ in $A$ to each edge $b_1 \to b_2$ in $B$, this assignation being compatible with duals. This morphism is $T$-generic, by the above explicit description of the adjunction $L_1 \ladj T_1$, if and only if each edge in $A$ appears exactly once as part of a path picked out by $g$.\end{rmk}

By \cite[3.9]{GU}, any small full subcategory $\ca B$ of $\InvGraph$ containing the representables (i.e. $k_![0]$ and $k_![1]$) is dense. Recall from \ref{dfn:cartesian} that such a $\ca B$ is said to be \emph{$T$-generically closed} if given $p \in \ca B$ and a $T$-generic $p\to Tq$ (i.e. $q$ is obtained from $p$ by replacing some of the dual pairs of edges of $p$ by paths), then $q$ is also in $\ca B$. As in the proof of Theorem \ref{thm:lra->arities}, any $T$-generically closed dense generator of $\InvGraph$ endows $T$ with arities. In particular, since $\Acyc$ is $T$-generically closed, we get
\begin{cor}\label{cor:A-arities-4-T}The free involutive category monad $T$ has arities $\Acyc$.\end{cor}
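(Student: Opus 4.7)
The plan is to apply the general strategy of the proof of Theorem~\ref{thm:lra->arities}, combined with Proposition~\ref{prp:monad-with-arities-elementary}. Two things need to be verified: that $\Acyc$ is a dense generator of $\InvGraph$, and that $\Acyc$ is $T$-generically closed.

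For density, I would note that $\Acyc$ contains the two representable presheaves $k_![0]$ and $k_![1]$ of $\InvGraphSite$, and invoke \cite[3.9]{GU} to conclude that $\Acyc$ is dense in $\InvGraph = \PSh{\InvGraphSite}$.

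For $T$-generic closure, I would exploit the explicit description of $T$-generic morphisms recalled in Remark~\ref{rmk:otherarities}: given $B \in \Acyc$ and a $T$-generic $g : B \to TA$, the involutive graph $A$ is obtained from $B$ by replacing each dual pair of edges by a path of some length $n \geq 0$. For $n \geq 1$ this is an edge-subdivision, which adds $n-1$ new degree-$2$ vertices and manifestly preserves finiteness, connectedness and acyclicity. For $n = 0$ one identifies source and target and deletes the edge; since the collection of contracted edges is a subset of the edges of a finite tree, it is itself a forest, and contracting a forest inside a tree produces again a finite tree. In all cases $A \in \Acyc$.

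With these two facts in hand, the argument of Theorem~\ref{thm:lra->arities} goes through verbatim: for any $f : B \to TX$ with $B \in \Acyc$, a $T$-generic factorisation (Lemma~\ref{lma:lra}(v))
\[ B \xrightarrow{g} TA \xrightarrow{Th} TX \]
has $A \in \Acyc$ by closure, hence gives an initial object of $\Fact{\Acyc}{T}{f}$. In particular this factorisation category is connected, and Proposition~\ref{prp:monad-with-arities-elementary} then yields that $T$ respects the arities $\Acyc$. The only mildly delicate point is the $n=0$ case of $T$-generic closure, which however reduces to the elementary combinatorial fact that contracting any set of edges of a finite tree produces again a finite tree.
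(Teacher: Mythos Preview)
Your proposal is correct and follows essentially the same route as the paper: density via \cite[3.9]{GU} since $\Acyc$ contains the representables, $T$-generic closure via the explicit description of $T$-generics in Remark~\ref{rmk:otherarities} (the paper phrases this as the substitution observation of Remark~\ref{rem:acyclic-substitution}), and then the proof of Theorem~\ref{thm:lra->arities} to conclude. Your explicit treatment of the $n=0$ (edge-contraction) case is a welcome detail that the paper leaves implicit.
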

\begin{rmk}\label{rem:acyclic-substitution}
Let $p$ be a finite connected acyclic graph, $a$ and $b$ vertices of $p$ connected by an edge, $q$ another finite connected acyclic graph, and $c$ and $d$ vertices of $q$. Then the graph $r$ obtained by identifying $a$ with $c$ and $b$ with $d$, removing the edge from $a$ to $b$ in $p$, and retaining the rest of $p$ and $q$ unchanged yet distinct, is also finite connected acyclic. In particular when the graphs in question are finite sequences, this is just the usual path substitution. When $p$ is general but $q$ is a finite sequence, this is the combinatorial observation responsible for \ref{cor:A-arities-4-T}.\end{rmk}

\subsection{System of idempotents for $T$ and reduced paths}\label{sec:system-of-idempotents}The category $\Gpd$ of groupoids is an \emph{epireflective} subcategory of the category $\InvGraph^T$ of involutive categories. This means that the inclusion $\Gpd\inc\InvGraph^T$ has a left adjoint reflection $\InvGraph^T\to\Gpd$ such that the unit of the adjunction is a pointwise epimorphism. This unit induces, for each involutive graph $X$, an epimorphism $r_X:TX\to GX$ which identifies $GX$ with a quotient of $TX$. The quotient map is obtained by identifying two paths if they have the same associated \emph{reduced path}.

More precisely, the edges of $TX$ are the paths in $X$. A \emph{redundancy} in a path is a subpath of length 2 of the form
\[ \xygraph{{a}="l" [r] {b}="m" [r] {a}="r" "l":"m"^-{f}:"r"^-{\iota(f)}} \]
where $f$ is an edge in $X$. Given any path one can remove all redundancies, and this is the edge map of an identity-on-objects morphism $\tau_X:TX \to TX$. These morphisms have the following easily verified properties:
\begin{enumerate}
\item (idempotent): for all $X$, $\tau_X^2=\tau_X$.\label{gpdmnd:idempotent}
\item (weak naturality): for all $f:X \to Y$, $\tau_YT(f)\tau_X = \tau_YT(f)$.\label{gpdmnd:wk-naturality}
\item (multiplication): for all $X$, $\tau_X\mu_X=\tau_X\mu_X\tau_{TX}T(\tau_X)$.\label{gpdmnd:mult}
\item (unit): for all $X$, $\tau_X\eta_X = \eta_X$.\label{gpdmnd:unit}
\end{enumerate}
A $T$-algebra $(X,\xi_X)$ is a groupoid if and only if for all edges $f:a \to b$ in $X$, $\iota(f)f = 1_a$. Thus $\Gpd$ is the full subcategory of $\InvGraph^T$ consisting of those $T$-algebras $(X,\xi_X)$ which satisfy $\xi_X\tau_X = \xi_X$. For an involutive graph $X$, the edges of $TX$ which are $\tau_X$-invariant are called \emph{reduced paths} in $X$, i.e. a path in $X$ is reduced precisely when it contains \emph{no redundancies}. A path in $X$ can also be seen as a morphism $p:k_![1] \to TX$ which is reduced precisely when $\tau_Xp=p$. More generally then, we shall say that a morphism $p:B \to TX$ is reduced when $\tau_Xp=p$; in particular, $p$ sends each edge of $B$ to a reduced path in $X$.

The reason for which we insist on the aforementioned description of a \emph{system of idempotents} $\tau$ associated to $T$ is that the free groupoid monad $G$ is entirely recoverable from $(T,\tau)$. Indeed, $G$ is obtained from $(T,\tau)$ by \emph{splitting idempotents}, which is to say that for each involutive graph $X$ we \emph{choose} graph morphisms\[ \begin{array}{lccr} {r_X : TX \to GX} &&& {i_X : GX \to TX} \end{array} \]such that $r_Xi_X = 1_{GX}$ and $i_Xr_X=\tau_X$. For each morphism $f:X \to Y$ in $\InvGraph$, we \emph{define} $Gf$ to be $r_YT(f)i_X$, and this is the arrow map of an endofunctor. The morphisms $r_X:TX \to GX$ are then the components of a natural transformation. Note that the $i_X$ are \emph{not} natural in $X$. The unit of $G$ is defined as
\[ \xygraph{{X}="l" [r] {TX}="m" [r] {GX}="r" "l":"m"^-{\eta_X}:"r"^-{r_X}} \]
and the multiplication of $G$ is defined as
\[ \xygraph{!{0;(1.5,0):(0,1)::} {G^2X}="p1" [r] {GTX}="p2" [r] {T^2X}="p3" [r] {TX}="p4" [r] {GX.}="p5" "p1":"p2"^-{Gi_X}:"p3"^-{i_{TX}}:"p4"^-{\mu_X}:"p5"^-{r_X}}  \]
It is then readily verified that\begin{prp}The endofunctor $G$ together with unit and multiplication just described is the free groupoid monad on $\InvGraph$, and $r:T \to G$ is the monad morphism induced by the unit of the adjunction between $G$-algebras and $T$-algebras.
\end{prp}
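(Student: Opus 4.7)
The plan is a sequence of direct verifications that exploit the four axioms on the system of idempotents $\tau$ together with the splitting relations $r_Xi_X=1_{GX}$ and $i_Xr_X=\tau_X$. Two derived identities will be used throughout: $r_X\tau_X=r_X$ and $\tau_Xi_X=i_X$, both immediate from the splitting equations.

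I would first check that $G$ is a well-defined endofunctor. Functoriality on identities is trivial. For a composite $X\xrightarrow{f}Y\xrightarrow{g}Z$ one has
\[ G(g)G(f)=r_ZT(g)i_Yr_YT(f)i_X=r_ZT(g)\tau_YT(f)i_X, \]
and the middle $\tau_Y$ is absorbed by rewriting $r_ZT(g)\tau_Y=r_Z\tau_ZT(g)\tau_Y=r_Z\tau_ZT(g)=r_ZT(g)$, where the middle equality is the weak naturality axiom (\ref{gpdmnd:wk-naturality}) applied to $g$. Next I would verify the naturality of $\eta^G=r\eta$ and $\mu^G=r\mu\,i_T\,G(i)$, together with the three monad axioms. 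Naturality of $\eta^G$ is a one-line application of the unit axiom (\ref{gpdmnd:unit}); naturality of $\mu^G$, associativity, and the unit laws for $\mu^G$ all reduce, after inserting or eliminating factors of $\tau=ir$, to the corresponding axioms for $(T,\mu,\eta)$, with the multiplication axiom (\ref{gpdmnd:mult}) providing the essential reduction every time $\tau$ has to be moved past $\mu$ or $T(\tau)$.

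Third, I would check that $r:T\to G$ is a monad morphism in the sense of \ref{dfn:monad-morphism}. Compatibility with units, $r_X\eta^T_X=\eta^G_X$, holds by definition. Compatibility with multiplications amounts to
\[ r_X\mu_X=\mu^G_X\circ G(r_X)\circ r_{TX}, \]
and after expanding the right-hand side and collapsing $i_{GX}r_{GX}$ to $\tau_{GX}$ and $i_{TX}r_{TX}$ to $\tau_{TX}$, the identity reduces to $\tau_X\mu_X\tau_{TX}T(\tau_X)=\tau_X\mu_X$, which is precisely axiom (\ref{gpdmnd:mult}).

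Finally, I would identify $\InvGraph^G$ with $\Gpd$ over $\InvGraph$. A $G$-algebra $(X,\xi)$ determines the $T$-algebra structure $\xi r_X:TX\to X$, which automatically satisfies $(\xi r_X)\tau_X=\xi r_Xi_Xr_X=\xi r_X$; hence by the criterion of \ref{sec:system-of-idempotents} it is a groupoid. Conversely, a groupoid $(X,\xi')$ with $\xi'\tau_X=\xi'$ descends along $i_X$ to a $G$-algebra $\xi'i_X:GX\to X$, and the two assignments are mutually inverse bijections on algebra structures, compatible with morphisms. This gives an isomorphism $\InvGraph^G\cong\Gpd$ over $\InvGraph$, whence $G$ is the free groupoid monad; under this identification $r:T\to G$ is the comparison of monads induced by the reflective inclusion $\Gpd\inc\InvGraph^T$, i.e.\ the monad morphism induced by the corresponding adjunction. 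The only non-bookkeeping step is the multiplication compatibility in the third paragraph, and its entire content is precisely axiom (\ref{gpdmnd:mult}); every other verification follows the same pattern of inserting and cancelling factors of $\tau=ir$ and applying (\ref{gpdmnd:idempotent})--(\ref{gpdmnd:unit}).
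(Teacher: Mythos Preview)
Your proposal is correct and is exactly the kind of routine verification the paper has in mind: the paper gives no proof at all beyond the phrase ``It is then readily verified that'', and your outline supplies precisely those verifications, all of which reduce to manipulating the splitting relations $r_Xi_X=1$, $i_Xr_X=\tau_X$ together with axioms (\ref{gpdmnd:idempotent})--(\ref{gpdmnd:mult}). The one place to be slightly careful is in your step~4, where checking that $\xi' i_X$ really satisfies the $G$-multiplication law uses both the naturality of $r$ (which you have implicitly established) and axiom (\ref{gpdmnd:mult}) once more; but this goes through without difficulty.
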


\begin{prp}\label{prp:S-not-arities-for-G}The free groupoid monad $G$ is not a monad with arities $\ca Seq$.\end{prp}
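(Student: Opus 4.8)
The plan is to apply the elementary criterion of Proposition \ref{prp:monad-with-arities-elementary}: the free groupoid monad $G$ fails to have arities $\ca Seq$ as soon as we exhibit a single morphism $f:B\to GX$ with $B\in\ca Seq$ whose factorisation category $\Fact{\ca Seq}{G}{f}$ is \emph{not connected}. Non-emptiness is never an issue here: concatenating subdivisions of the reduced paths that $f$ picks out always produces at least one factorisation through a sequence, so all the content is in the \emph{connectedness}. The decisive feature to exploit is that reduced paths in a groupoid may cancel, so that one composable pair of reduced paths can be ``filled in'' by genuinely different sequences. I would take $X$ to be the tripod, i.e.\ the finite connected acyclic graph with a central vertex $o$ and three outer edges $e_1,e_2,e_3$ (a trivalent graph, hence not isomorphic to any $k_![n]$), together with $B=k_![2]$ and $f:k_![2]\to GX$ the pair of geodesics $\bar e_1e_2$ and $\bar e_2e_3$ running through $o$ between the three leaves.

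I would then write down two factorisations. The first, $O_1$, is the linear one: $A_1=k_![4]$ mapped onto the walk that traverses $o$ twice, arriving by $e_1$ and leaving by $e_2$, then arriving by $e_2$ and leaving by $e_3$, with the three marked vertices at the two ends and the midpoint. The second, $O_2$, realises the \emph{same} $f$ through a cancelling excursion along the third edge: $A_2=k_![6]$ mapped onto the walk with edge-labels $\bar e_1,e_2,\bar e_2,e_1,\bar e_1,e_3$ (the two geodesics being read off as the sub-walks on $[0,2]$ and $[2,6]$), where the redundancy $e_1\bar e_1$ collapses under reduction so that the second geodesic still equals $\bar e_2e_3$ in $GX$. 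Both are legitimate objects of $\Fact{\ca Seq}{G}{f}$, and neither admits a morphism to the other, since in $O_1$ the terminal edge $e_3$ is reached from the $e_2$-side of $o$ whereas in $O_2$ it is reached from the $e_1$-side.

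The heart of the argument, and the step I expect to be the main obstacle, is to show that $O_1$ and $O_2$ lie in \emph{distinct} components, i.e.\ that no zig-zag of morphisms in $\Fact{\ca Seq}{G}{f}$ joins them. The clean half of this is the observation that a common refinement $O_1\to Z\leftarrow O_2$ inside $\ca Seq$ cannot exist: such a $Z$ would have to carry both walks with matching marked vertices, forcing the $o$-preimage sitting just before the terminal $e_3$-edge to be incident, simultaneously, to an $e_2$-edge (coming from $O_1$) and an $e_1$-edge (coming from $O_2$); that preimage would then have degree at least three, which no sequence $k_![m]$ admits. Upgrading this from ``no common refinement'' to ``no zig-zag'' is the technical crux: I would manufacture a component-invariant recording, at the branch vertex $o$, the pairing of half-edges induced by the walk --- the datum that a degree-two vertex can never reorganise without first amalgamating two $o$-preimages --- and check that it is preserved by every morphism of $\Fact{\ca Seq}{G}{f}$ while separating $O_1$ from $O_2$. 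Conceptually this is exactly the missing branch point: the amalgamation that $\ca Seq$ forbids is supplied, in $\Fact{\Acyc}{G}{f}$, by the tripod $X$ itself, which is a weakly terminal (indeed $G$-generic) factorisation to which both $O_1$ and $O_2$ map; this is precisely why $\Acyc$ \emph{does} endow $G$ with arities (Theorem \ref{thm:A-arities-for-G}) while $\ca Seq$ does not.
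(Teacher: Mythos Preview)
Your overall strategy is exactly the paper's: invoke Proposition~\ref{prp:monad-with-arities-elementary} and exhibit a single $f:B\to GX$ whose factorisation category $\Fact{\ca Seq}{G}{f}$ is disconnected, with the tripod as the witnessing $X$. What differs is the bookkeeping and, crucially, the choice of component-invariant.

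The paper works with $B=k_![1]$ rather than $k_![2]$. It lets $f$ pick out a reduced path $x_0\to x_1\to x_2$ and assumes only that $X$ contains an extra leaf $y$ attached at $x_1$ by an edge $s$ (so your tripod, with two of its leaves used by $f$ and the third held in reserve). An object of $\Fact{\ca Seq}{G}{f}$ then amounts to a path $h:k_![m]\to X$ together with an ordered pair $(i,j)$ of positions such that the segment $i\to j$ of $h$ reduces to the given reduced path. The two objects compared are the tautological $h_1=(f_0,f_1)$ and the detoured $h_2=(f_0,s,\bar s,f_1)$. The invariant is not a half-edge pairing but the predicate ``the segment $i\to j$ contains an \emph{inner redundancy}'', i.e.\ a length-two subpath $e,\bar e$. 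A morphism $\delta$ in the factorisation category carries an inner redundancy of its source to one of its target; conversely $\delta$ restricted to the segment $i_1\to j_1$ surjects onto the segment $i_2\to j_2$, so an inner redundancy of the target is already detected in the source. Thus presence of an inner redundancy is a genuine zig-zag invariant, and it separates $h_1$ from $h_2$ at once.

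Your proposed invariant --- the local pairing at the $o$-preimage adjacent to the ``terminal'' $e_3$-edge --- is exactly where your argument is incomplete, and I do not see how to make it work as stated. The ``terminal $e_3$-edge'' is not intrinsically singled out: nothing forces the marked vertex $i_2$ to sit at an endpoint of the sequence, and walks may fold, so a morphism $\delta$ can send the $o$-preimage you are tracking to an endpoint of the target (destroying the pairing) or identify two $o$-preimages. The datum is therefore neither well-defined on all objects nor stable along zig-zags. Your $B=k_![2]$ choice also creates the awkward feature that the leaf you want to treat as ``extra'' for segment~2 is precisely the marked image of vertex~$0$; the paper's $B=k_![1]$ avoids this entanglement entirely. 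The cleanest repair is to drop to $B=k_![1]$, let $f$ use only two leaves of your tripod, and adopt the inner-redundancy invariant.
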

\begin{proof}Let $f:k_![1] \to GX$ be a graph morphism which picks out a reduced path\begin{equation}\label{eq:seq-for-counterex}\xygraph{{x_0}="l" [r] {x_1}="m" [r] {x_2}="r" "l":"m"^-{f_0}:"r"^-{f_1}}\end{equation}in $X$. An object $(g,k_![m],h)$ in $\Fact{\ca Seq}{G}{f}$ is by definition a factorisation of $f$ of the form\[ \xygraph{!{0;(1.5,0):} {k_![1]}="l" [r] {Gk_![m]}="m" [r] {GX.}="r" "l":"m"^-{g}:"r"^-{Gh}} \]But $Gk_![m]$ is the chaotic category on the set $\{0,...,m\}$, so an edge $g:k_![1] \to Gk_![m]$ is determined uniquely by its end points. Thus the data $(g,k_![m],h)$ amounts to a path $h:k_![m] \to X$ and an ordered pair $(i,j)$ from $\{0,...,m\}$, such that the segment $i \to j$ of the path $h$ is (\ref{eq:seq-for-counterex}) once redundancies have been removed. Let us denote this data as $(i,j,h)$. A morphism $\delta:(i_1,j_1,h_1) \to (i_2,j_2,h_2)$ in $\Fact{\ca Seq}{G}{f}$ amounts to a morphism\[ \xygraph{{k_![m_1]}="l" [r(2)] {k_![m_2]}="r" [dl] {X}="b" "l":"r"^-{\delta}:"b"^-{h_2}:@{<-}"l"^-{h_1}} \]in $\InvGraph$ over $X$ such that $\delta i_1=i_2$ and $\delta j_1=j_2$. An \emph{inner redundancy} for $(i,j,h)$ is a subpath of length 2 of $h$, contained in the segment $i \to j$, of the form
\[ \xygraph{{x_2}="l" [r] {y}="m" [r] {x_2.}="r" "l":"m"^-{s}:"r"^-{ds}} \]
Note that given a morphism $\delta:(i_1,j_1,h_1) \to (i_2,j_2,h_2)$ and an inner redundancy $r$ for $(i_1,j_1,h_1)$, one gets an inner redundancy for $(i_2,j_2,h_2)$ by applying $\delta$ to the length 2 segment of $k_![m_1]$ whose image is $r$. Conversely, note that $\delta$ restricted to the segment $i_1 \to j_1$ maps onto the segment $i_2 \to j_2$, thus if $(i_2,j_2,h_2)$ has an inner redundancy, then so does $(i_2,j_2,h_2)$. Thus if $(i,j,h)$ has an inner redundancy, then so do all the other objects of $\Fact{\ca Seq}{G}{f}$ in its connected component. Suppose $X$ has a vertex $y$ distinct from the $x_i$ and an edge $s:x_1 \to y$. Take $h_1$ to be the path $(f_0,f_1)$ which has no redundancies, $i_1=0$ and $j_1=2$. Take $h_2$ to be the path $(f_0,s,ds,f_1)$, $i_2=0$ and $j_2=4$. Then $(i_1,j_1,h_1)$ doesn't have an inner redundancy whereas $(i_2,j_2,h_2)$ does, and so they are in different components of $\Fact{\ca Seq}{G}{f}$. Thus by Proposition \ref{prp:monad-with-arities-elementary} the result follows.\end{proof}
Sequences are thus not enough to give $G$ arities because of certain inner redundancies. To overcome this we consider instead finite acyclic connected graphs. First we isolate the analogue of generic factorisations for $G$. A morphism $g:B \to GX$ is said to be \emph{$G$-generic} when it factors as\[ \xygraph{!{0;(1.5,0):} {B}="l" [r] {TX}="m" [r] {GX}="r" "l":"m"^-{\tilde{g}}:"r"^-{r_X}} \]where $\tilde{g}$ is \emph{$T$-generic} in the sense of Section \ref{dfn:generic} and \emph{reduced} in the sense of Section \ref{sec:system-of-idempotents}; in particular, we have $\tilde{g}=\tau_X\tilde{g}=i_Xg$.
\begin{lma}\label{lem:G-genfact}Every $f:B \to GX$ may be factored as\[ \xygraph{!{0;(1.5,0):} {B}="l" [r] {GA}="m" [r] {GX}="r" "l":"m"^-{g}:"r"^-{Gh}} \]where $g$ is $G$-generic. Moreover if $B$ is a sequence (resp. a finite connected acyclic graph), then so is $A$.\end{lma}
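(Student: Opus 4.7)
The plan is to lift $f:B\to GX$ along the section $i_X$, apply the $T$-generic factorisation available from the strong cartesianness of $T$ (Corollary~\ref{cor:S-ar-4-T}), and then project the generic part back to $G$ via the monad morphism $r$. Specifically, I would set $f':=i_X f:B\to TX$; since $\tau_X i_X = i_X r_X i_X = i_X$, this $f'$ is reduced in the sense of Section~\ref{sec:system-of-idempotents}. Lemma~\ref{lma:lra} then yields a $T$-generic factorisation $f' = T(h)\,\tilde g$ with $\tilde g:B\to TA$ generic and $h:A\to X$, and I would define $g:=r_A\tilde g$. Naturality of $r:T\Rightarrow G$ together with $r_X i_X = 1_{GX}$ gives
\[ Gh\circ g \;=\; Gh\circ r_A\,\tilde g \;=\; r_X\,T(h)\,\tilde g \;=\; r_X f' \;=\; f, \]
so this is a factorisation of $f$ of the required form.

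The crux, and the main obstacle I expect in the formal writeup, is to show that $\tilde g$ is itself reduced, so that $g = r_A\tilde g$ qualifies as $G$-generic. This is the point at which the strongly cartesian machinery for $T$ has to interact with the idempotents $\tau$ controlling $G$. I would argue by contradiction: if some image $\tilde g(e)$ contained a length-$2$ backtrack $s,\iota(s)$ in $A$, then $T(h)\tilde g(e) = f'(e)$ would contain the backtrack $h(s),\iota h(s)$ in $X$, contradicting reducedness of $f'$. Hence $\tau_A\tilde g = \tilde g$, and $g$ is $G$-generic by definition.

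For the shape statement I would appeal to the explicit description of $T$-generic factorisations recalled in Remark~\ref{rmk:otherarities}: the involutive graph $A$ is obtained from $B$ by replacing each dual pair of edges $e$ by a path of length $n_e$, where $n_e \geq 0$ is the length of the reduced path $f'(e)$ in $X$. When $n_e \geq 1$ this is an ordinary subdivision; when $n_e = 0$ it is the contraction identifying the source and target of $e$. Both operations send a finite sequence to a finite sequence, and, by the combinatorial fact recorded in Remark~\ref{rem:acyclic-substitution}, also send a finite connected acyclic graph to another such; the only case requiring care is the contraction $n_e = 0$, which one checks preserves treeness.
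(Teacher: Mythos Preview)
Your argument is correct and follows the same route as the paper: lift $f$ along $i_X$, take the $T$-generic factorisation $i_Xf=T(h)\tilde g$, set $g=r_A\tilde g$, and use naturality of $r$ together with $r_Xi_X=1$ to recover $f=G(h)g$. Your verification that $\tilde g$ is reduced (redundancies in $\tilde g(e)$ would survive under $T(h)$, contradicting reducedness of $i_Xf$) is exactly what the paper's terse ``(by $T$-genericity) $g'$ is reduced, since $i_Xf$ is reduced'' is gesturing at, and your appeal to $T$-generic closure of $\mathcal{S}eq$ and $\mathit{Acyc}$ for the shape statement matches the paper's.
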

\begin{proof}
In the diagram
\[ \xygraph{ !{0;(1,0):(0,1.5)::} {B}="tl" [r(2)] {GX}="tm" [r(2)] {TX}="tr" [dl] {TA}="br" [l(2)] {GA}="bl" "tl":"tm"^-{f}:@<-1ex>"tr"_-{i_X}:@{<-}"br"^-{Th}:@<-1ex>"bl"_-{r_A}:@{<-}"tl"^-{g} "tr":@<-1ex>"tm"_-{r_X} "bl":"tm"_(.7){Gh} "tl":"br"^(.75){g'} "bl":@<-1ex>"br"_-{i_A}} \]
we first generically factor $i_Xf=T(h)g'$ using the fact that $T$ is strongly cartesian and then put $g=r_Ag'$. It follows that $f=G(h)g$ by the naturality of $r$ and $r_Xi_X=1_{GX}$. Observe that (by $T$-genericity) $g'$ is reduced, since $i_Xf$ is reduced. The second assertion follows from the fact that sequences and finite connected acyclic graphs are $T$-generically closed as explained in Remark \ref{rmk:otherarities}.\end{proof}
The following lemma is the main technical result of this section; it explains in which sense $T$-generics between finite acyclic graphs are compatible with the process of removing redundancies.
\begin{lma}\label{lem:tau-compat}
Suppose that $g_1$, $g_2$, $h_1$ and $h_2$ as in
\begin{equation}\label{diag:tau-compat-lemma}
\xygraph{!{0;(1.5,0):(0,.667)::} {p}="tl" [r(2)] {Ts}="tr" [d(2)] {TX}="br" [l(2)] {Tq}="bl" [ur] {Tt}="m" "tl":"tr"^-{g_2}:"br"^-{Th_2}:@{<-}"bl"^-{Th_1}:@{<-}"tl"^-{g_1} "m"(:@{<.}"tl"|-{g_3},:@{<.}"tr"|-{T\delta_2},:@{<.}"bl"|-{T\delta_1},:@{.>}"br"|-{Th_3})}
\end{equation}
are given such that $g_1$ and $g_2$ are $T$-generic, $\tau_XT(h_1)g_1=T(h_2)g_2$ and $p \in \Acyc$, then there exists $t \in \Acyc$, $g_3$, $h_3$, $\delta_1$ and $\delta_2$ as indicated in (\ref{diag:tau-compat-lemma}), such that
\[ \begin{array}{lcccr} {\tau_tT(\delta_1)g_1 = g_3 = T(\delta_2)g_2} && {h_3\delta_1 = h_1} && {h_3\delta_2 = h_2.} \end{array} \]
\end{lma}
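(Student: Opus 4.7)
The plan is to construct $t$ explicitly as a quotient of $q$ in which the reductions that occur in $X$ (but not a priori in $q$) are realised at the graph-theoretic level.

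First, unpack the hypothesis in combinatorial terms. By Remark \ref{rmk:otherarities}, the $T$-generic $g_1:p\to Tq$ presents $q$ as a refinement of $p$: each edge $e$ of $p$ (up to duality) determines a path $\pi^q_e$ in $q$, and these paths together with their duals partition the edges of $q$. Likewise $g_2$ produces paths $\pi^s_e$ partitioning the edges of $s$. The equation $\tau_X T(h_1)g_1=T(h_2)g_2$ then says, edge by edge, that the path $h_1(\pi^q_e)$ in $X$ reduces to $h_2(\pi^s_e)$, the latter already being reduced.

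Second, since reduction of paths in $X$ is confluent, each $\pi^q_e$ carries a canonical nested system of cancelling index pairs $(j,j')$ such that $h_1(\pi^q_{e,j'})=\iota_X h_1(\pi^q_{e,j})$; the edges outside the union of these pairs are exactly those whose $h_1$-images make up $h_2(\pi^s_e)$. Define $t$ to be the quotient of $q$ obtained by identifying, for every such pair, the edge $\pi^q_{e,j'}$ with $\iota_q(\pi^q_{e,j})$ (and forcing the necessary identifications on endpoints). Two things must be checked: (a) the involutive graph morphism $h_1:q\to X$ descends to some $h_3:t\to X$, which holds because the identifications we impose are precisely those already satisfied by $h_1$; and (b) the quotient $t$ lies in $\Acyc$. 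For (b), proceed by induction on the nested cancellation structure: an innermost pair $(j,j')$ corresponds to a subpath $u\to v\to u$ of $q$ which is collapsed to a single involutive edge pair in $t$, an operation that preserves finiteness, connectedness, and acyclicity (this is the inverse of the path-substitution of Remark \ref{rem:acyclic-substitution}).

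Third, define $\delta_1:q\to t$ as the quotient map, $\delta_2:s\to t$ by sending each edge of $\pi^s_e$ to the corresponding surviving edge of $\pi^q_e$ in $t$ (this is well-defined and involutive by the combinatorial correspondence), and $g_3:=\tau_t T(\delta_1)g_1$. The identities $h_3\delta_1=h_1$ and $h_3\delta_2=h_2$ hold by construction of $h_3$. For $g_3=T(\delta_2)g_2$, observe edgewise that $\delta_1(\pi^q_e)$ in $t$ now literally contains the cancelling pairs as consecutive $(a,\iota a)$'s, so its $\tau_t$-reduction removes precisely them and returns the surviving sub-path, which is the image of $\pi^s_e$ under $\delta_2$.

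The main obstacle is step two, specifically the acyclicity of $t$. The danger is that the vertex identifications forced by pairing $\pi^q_{e,j'}$ with $\iota_q\pi^q_{e,j}$ could a priori create cycles, and one must show they never do. The nested (hence non-crossing) structure of the cancellations is essential here: it lets us collapse one innermost pair at a time, each collapse being the reverse of attaching a finite sequence to a single edge, which by Remark \ref{rem:acyclic-substitution} preserves membership in $\Acyc$. Once this is established, the remaining verifications are routine edgewise comparisons driven by the genericity of $g_1$ and $g_2$.
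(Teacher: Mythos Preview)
Your approach is correct and arrives at (an isomorphic copy of) the same object $t$ as the paper, but from the opposite side: the paper first reduces to the case $p=k_![1]$ and then \emph{enlarges} $s$, attaching to the sequence $k_![n]$ a path of length $k$ for every basic $k$-redundancy occurring in $h_1$ (and reassembles the general case edge-by-edge via Remark \ref{rem:acyclic-substitution}); you instead \emph{quotient} $q$. Your quotient map is exactly their $\delta_1$, and the pairs you fold together become their attached branches. The paper's presentation makes $t\in\Acyc$ immediate (one is gluing trees along single edges), while yours makes the equations $h_3\delta_1=h_1$ and $\tau_tT(\delta_1)g_1=T(\delta_2)g_2$ fall out by construction.

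Two points should be tightened. First, confluence of reduction yields a unique reduced path but \emph{not} a canonical nested matching of cancelling positions: for instance $(f,\iota f,f,\iota f)$ admits both $\{(1,2),(3,4)\}$ and $\{(2,3),(1,4)\}$. You only need \emph{some} nested matching, so simply choose one; drop the word ``canonical''. Second, the collapse of an innermost pair is not literally the inverse of the substitution in Remark \ref{rem:acyclic-substitution}. What you perform at each step is a \emph{fold}: in the current tree $t'$ you take two edges $u\!-\!v$ and $v\!-\!w$ consecutive along the surviving main path and identify $w$ with $u$ (and the second edge with the dual of the first). This keeps $t'$ a tree precisely because the unique $u$--$w$ path in $t'$ is $u\!-\!v\!-\!w$, i.e.\ $v$ separates $u$ from $w$; that in turn follows from the invariant that the positions not yet matched trace out a \emph{simple} path in the current quotient. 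State this invariant and check that each fold preserves it, rather than invoking an unspecified inverse operation.
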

\begin{proof}
Observe that the equation $\tau_XT(h_1)g_1=T(h_2)g_2$ guarantees that the composite $T(h_2)g_2$ is reduced. Let us consider first a special case. We assume $p=k_![1]$ so that $q$ and $s$ are sequences also, so we write $q=k_![m]$ and $s=k_![n]$, and then we assume $g_1$ (resp. $g_2$) picks out the unique reduced path from $0$ to $m$ (resp. $n$). Thus in this case we have paths
\[ \begin{array}{lccr} {h_1:k_![m] \to X} &&& {h_2:k_![n] \to X} \end{array} \]
in $X$, $h_2$ is reduced, and $h_1$ differs from $h_2$ only in that it may contain some redundancies, and so $m \geq n$.

Let us organise in more detail these redundancies making up the difference between $h_1$ and $h_2$. We define a \emph{general redundancy} on a vertex $y$ in $Y \in \InvGraph$, to be a path $p$ from $y$ to itself whose associated reduced path is empty, that is to say, $\tau_Y(p)$ is the empty path. For $k \in \N$ we define a \emph{basic $k$-redundancy} on $y$ to be a path of the form $(e_1,...,e_k,de_k,...,de_1)$, the case $k=1$ being what we called a mere redundancy in Section \ref{sec:system-of-idempotents}. We define an \emph{irreducible $k$-redundancy} to be a basic $k$-redundancy that is not decomposable into a sequence of basic redundancies of order less than $k$. By straight forward inductive arguments one may verify that any general redundancy on a vertex $y$ in $Y \in \InvGraph$ can be written as a composite of basic redundancies, and in fact uniquely as a composite of irreducible redundancies.

In our case $h_1$ may thus be regarded as consisting of $h_2$ together with a finite sequence of basic redundancies at each vertex of the path $h_2$. Here is an example to illustrate. Consider a diagram in $X$
\[ \xygraph{{x_0}="p1" [r(2)] {x_1}="p2"([dl] {x_5}="p2l1" [d] {x_6}="p2l2",[dr] {x_7}="p2r", [r(2)] {x_2}="p3" [r(2)] {x_3}="p4" ([d(.6)]{x_8}="p41" [d(.6)] {x_9}="p42" [d(.6)] {x_{10}}="p43" [d(.6)] {x_{11}}="p44", [r(2)] {x_4}="p5" [d] {x_{12}}="p51")) "p1":"p2"^-{f_1}:"p3"^-{f_2}:"p4"^-{f_3}:"p5"^-{f_4} "p2"(:"p2l1"^-{e_1}:"p2l2"^-{e_2},:"p2r"^-{e_3}) "p4":"p41"^-{e_4}:"p42"^-{e_5}:"p43"^-{e_5}:"p44"^-{e_6} "p5":"p51"^-{e_7}} \]
in which the horizontal path $(f_1,f_2,f_3,f_4)$ is reduced. Take $h_2$ to be this path. Then one could take $h_1$ to be the path
\[ (f_1,e_1,e_2,de_2,de_1,e_3,de_3,f_2,f_3,e_4,e_5,e_6,e_7,de_7,de_6,de_5,de_4,f_4,e_7,de_7). \]
So in this example one has the empty sequence of basic redundancies at the vertices $x_0$ and $x_2$, the sequence $((e_1,e_2,de_2,de_1),(e_3,de_3))$ at $x_1$, etc. From the data of a general path $h_1$ and some decomposition of its redundancies into basic ones, one can construct a finite connected acyclic graph by taking first the sequence associated to its underlying reduced path, and at each vertex splicing in a path of length $k$, starting from this vertex, for each basic $k$ redundancy appearing in the given decomposition of its general redundancies. For the above illustrative example this is of course
\[ \xygraph{{\bullet}="p1" [r(2)] {\bullet}="p2"([dl] {\bullet}="p2l1" [d] {\bullet}="p2l2",[dr] {\bullet}="p2r", [r(2)] {\bullet}="p3" [r(2)] {\bullet}="p4" ([d(.6)] {\bullet}="p41" [d(.6)] {\bullet}="p42" [d(.6)] {\bullet}="p43" [d(.6)] {\bullet}="p44", [r(2)] {\bullet}="p5" [d] {\bullet}="p51")) "p1":@{-}"p2"^-{}:@{-}"p3"^-{}:@{-}"p4"^-{}:@{-}"p5"^-{} "p2"(:@{-}"p2l1"^-{}:@{-}"p2l2"^-{},:@{-}"p2r"^-{}) "p4":@{-}"p41"^-{}:@{-}"p42"^-{}:@{-}"p43"^-{}:@{-}"p44"^-{} "p5":@{-}"p51"^-{}} \]
For general $h_1$ and $h_2$ we call (the associated involutive graph of) this finite connected acyclic graph $t$. The morphism $g_3:k_![1] \to Tt$ picks out the horizontal reduced path, $\delta_1:[m] \to t$ is the path that travels along the horizontal but also visits each basic redundancy as it arises in $t$, $\delta_2:[n] \to t$ is the path that just travels in the horizontal direction from left to right, and $h_3$ sends $t$ to the image of $h_1$. Clearly this data satisfies the axioms demanded by the statement of this result, and so we have proved this result in the special case $p=k_![1]$, and $g_1$ and $g_2$ as described above.

Obtaining the general case one uses Remark \ref{rem:acyclic-substitution} and the above special case. For given a general $p$ now, and an edge $e$ in $p$, restricting $g_1$, $g_2$, $h_1$, $h_2$ to the image of $e$ in $Tq$, $Ts$ and $TX$, gives an instance of our special case. Thus one constructs the associated finite connected acyclic graph $t_e$, and the associated data $g_{3,e}$, $h_{3,e}$, $\delta_{1,e}$ and $\delta_{2,e}$. The graph $t$ is then obtained by starting with the graph $p$, and at each edge $e$ substituting in the graph $t_e$ following Remark \ref{rem:acyclic-substitution}, and so $t$ is also a finite connected acyclic graph. By construction the data $g_3$, $h_3$, $\delta_1$ and $\delta_2$ is defined uniquely so that its restriction to each $e$ in $p$ is the data $g_{3,e}$, $h_{3,e}$, $\delta_{1,e}$ and $\delta_{2,e}$.
\end{proof}
The important implication of the last lemma at the level of the monad $G$ is
\begin{lma}\label{lem:G-gen-connected}
Let $f:p \to GX$, $p \in \Acyc$, $(g_1,q_1,h_1)$ and $(g_2,q_2,h_2)$ be in $\Fact{\Acyc}{G}{f}$, and $g_1$ and $g_2$ be $G$-generic. Then $(g_1,q_1,h_1)$ and $(g_2,q_2,h_2)$ are in the same connected component of $\Fact{\Acyc}{G}{f}$.
\end{lma}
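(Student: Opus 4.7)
The plan is to produce a single intermediate object in $\Fact{\Acyc}{G}{f}$ that connects to both $(g_1,q_1,h_1)$ and $(g_2,q_2,h_2)$ by zig-zags, with Lemma~\ref{lem:tau-compat} as the essential tool. First unpack $G$-genericity: each $g_i$ can be written as $g_i = r_{q_i}\tilde{g}_i$ with $\tilde{g}_i:p \to Tq_i$ a $T$-generic and reduced morphism, and $\tilde{g}_i = i_{q_i}g_i$. Starting from $f = G(h_i)g_i = r_X T(h_i)\tilde{g}_i$ and applying $i_X$ (using $i_X r_X = \tau_X$) yields the key identity
\[ i_X f \;=\; \tau_X T(h_1)\tilde{g}_1 \;=\; \tau_X T(h_2)\tilde{g}_2, \]
so in particular $i_Xf$ is reduced.

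Next I would choose a $T$-generic factorisation $i_X f = T(h'')g''$ with $g'':p \to Tq''$. By $T$-generic closedness of $\Acyc$ one has $q'' \in \Acyc$, and the reducedness of $i_X f$ forces $g''$ to be reduced (exactly as in the proof of Lemma~\ref{lem:G-genfact}). Setting $g^G := r_{q''}g''$, the triple $(g^G,q'',h'')$ then lies in $\Fact{\Acyc}{G}{f}$, since $G(h'')g^G = r_X T(h'')g'' = r_X i_X f = f$. This common target is the intermediate factorisation to which both $(g_1,q_1,h_1)$ and $(g_2,q_2,h_2)$ will be connected.

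Now apply Lemma~\ref{lem:tau-compat} twice, once per $i \in \{1,2\}$, with $T$-generic inputs $(\tilde{g}_i,g'')$ and free parts $(h_i,h'')$; the required hypothesis $\tau_X T(h_i)\tilde{g}_i = T(h'')g''$ is exactly the identity above, since both sides equal $i_X f$. Each application produces $t_i \in \Acyc$ together with $\delta_1^{(i)}:q_i \to t_i$, $\delta_2^{(i)}:q'' \to t_i$, $g_3^{(i)}:p \to Tt_i$ and $h_3^{(i)}:t_i \to X$ satisfying $\tau_{t_i}T(\delta_1^{(i)})\tilde{g}_i = g_3^{(i)} = T(\delta_2^{(i)})g''$ together with $h_3^{(i)}\delta_1^{(i)} = h_i$ and $h_3^{(i)}\delta_2^{(i)} = h''$. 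Setting $\bar{g}^{(i)} := r_{t_i}g_3^{(i)}$, I would then verify that $\delta_1^{(i)}$ and $\delta_2^{(i)}$ define morphisms $(g_i,q_i,h_i) \to (\bar{g}^{(i)},t_i,h_3^{(i)})$ and $(g^G,q'',h'') \to (\bar{g}^{(i)},t_i,h_3^{(i)})$ respectively in $\Fact{\Acyc}{G}{f}$; this reduces to a short computation using naturality of $r$, the identity $r_A\tau_A = r_A$, and reducedness of $g''$. The main obstacle has already been overcome by Lemma~\ref{lem:tau-compat}; what remains is only the $r/i/\tau$-bookkeeping of Section~\ref{sec:system-of-idempotents}. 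Concatenating the two resulting zig-zags produces a zig-zag of length four in $\Fact{\Acyc}{G}{f}$ from $(g_1,q_1,h_1)$ to $(g_2,q_2,h_2)$ through $(\bar{g}^{(1)},t_1,h_3^{(1)})$, $(g^G,q'',h'')$ and $(\bar{g}^{(2)},t_2,h_3^{(2)})$, establishing the claimed connectedness.
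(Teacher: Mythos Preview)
Your proposal is correct and follows essentially the same route as the paper's proof: both take a $T$-generic factorisation of the common reduced morphism $i_Xf=\tau_XT(h_i)\tilde g_i$, apply Lemma~\ref{lem:tau-compat} once for each $i$, and assemble the resulting $\delta$-maps into a length-four zig-zag through the intermediate factorisation $(r_{q''}g'',q'',h'')$. The only differences are notational (your $\tilde g_i$ is the paper's $i_{q_i}g_i$, your $(g'',q'',h'')$ is the paper's $(g,q,h)$), and your explicit verification of the $r/\tau$ bookkeeping is exactly what the paper leaves to the reader with the phrase ``one verifies''.
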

\begin{proof}
The top inner square of
\[ \xygraph{!{0;(2,0):(0,.4)::} {p}="tl" [r] {Gq_2}="tr" [d] {GX}="br" [l] {Gq_1}="bl" [d] {Tq_1}="ol" [r(2)] {TX}="om" [u(2)] {Tq_2}="or" "tl":"tr"^-{g_2}:"br"^-{Gh_2}:@{<-}"bl"^-{Gh_1}:@{<-}"tl"^-{g_1} "or":"om"^-{Th_2}:@{<-}"ol"^-{Th_1} "bl":"ol"_-{i_{q_1}} "br":"om"^-{i_X} "tr":"or"^-{i_{q_2}}} \]
commutes by definition, but the outer diagram only commutes after post-composition with $\tau_X$ (recall that $i_Z$ is not natural in $Z$). Take a generic factorisation
\[ \xygraph{!{0;(1.5,0):} {p}="l" [r] {Tq}="m" [r] {TX}="r" "l":"m"^-{g}:"r"^-{Th}} \]
of the composite $\tau_XT(h_1)i_{q_1}g_1=\tau_XT(h_2)i_{q_2}g_2$. Now we apply Lemma \ref{lem:tau-compat} twice to produce
\[ \xygraph{{\xybox{\xygraph{!{0;(1.2,0):(0,.667)::} {p}="tl" [r(2)] {Tq}="tr" [d(2)] {TX}="br" [l(2)] {Tq_1}="bl" [ur] {Tt_1}="m" "tl":"tr"^-{g}:"br"^-{Th}:@{<-}"bl"^-{Th_1}:@{<-}"tl"^-{i_{q_1}g_1} "m"(:@{<.}"tl"|-{g_3},:@{<.}"tr"|-{T\delta_2},:@{<.}"bl"|-{T\delta_1},:@{.>}"br"|-{Th_3})}}} [r(4.5)]
{\xybox{\xygraph{!{0;(1.2,0):(0,.667)::} {p}="tl" [r(2)] {Tq}="tr" [d(2)] {TX}="br" [l(2)] {Tq_2}="bl" [ur] {Tt_2}="m" "tl":"tr"^-{g}:"br"^-{Th}:@{<-}"bl"^-{Th_2}:@{<-}"tl"^-{i_{q_2}g_2} "m"(:@{<.}"tl"|-{g_4},:@{<.}"tr"|-{T\delta_4},:@{<.}"bl"|-{T\delta_3},:@{.>}"br"|-{Th_4})}}}} \]
and then because of the equations satisfied by this data from Lemma \ref{lem:tau-compat}, one verifies that $\delta_1$,  $\delta_2$, $\delta_3$ and $\delta_4$ are well-defined morphisms
\[ \xygraph{!{0;(2,0):} {(g_1,q_1,h_1)}="p1" [r] {(r_{t_1}g_3,t_1,h_3)}="p2" [r] {(r_qg,q,h)}="p3" [r] {(r_{t_2}g_4,t_2,h_4)}="p4" [r] {(g_2,q_2,h_2)}="p5" "p1":"p2"^-{\delta_1}:@{<-}"p3"^-{\delta_2}:"p4"^-{\delta_4}:@{<-}"p5"^-{\delta_3}} \]
in $\Fact{\Acyc}{G}{f}$ as indicated in this last display.
\end{proof}
From Lemma \ref{lem:G-genfact}, Lemma \ref{lem:G-gen-connected} and Proposition \ref{prp:monad-with-arities-elementary} we obtain
\begin{thm}\label{thm:A-arities-for-G}
The free groupoid monad $G$ has arities $\Acyc$.
\end{thm}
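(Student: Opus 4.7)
The strategy is to invoke Proposition \ref{prp:monad-with-arities-elementary}: since $\Acyc$ is a dense generator of $\InvGraph$ (it contains the representables $k_![0]$ and $k_![1]$), the free groupoid monad $G$ will have arities $\Acyc$ as soon as we show that for every morphism $f:p\to GX$ with $p\in\Acyc$, the factorisation category $\Fact{\Acyc}{G}{f}$ is connected. This will be accomplished in two steps, corresponding to the two preceding lemmas.

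First, I will show that every object of $\Fact{\Acyc}{G}{f}$ is connected to one whose first component is $G$-generic. Given an arbitrary object $(g,q,h)$ with $g:p\to Gq$, Lemma \ref{lem:G-genfact} applied to $g$ itself produces a factorisation $g = G(h')\circ g'$ with $g':p\to Ga$ a $G$-generic morphism and $a\in\Acyc$ (since $p\in\Acyc$). The triple $(g', a, h\circ h')$ is then an object of $\Fact{\Acyc}{G}{f}$, because $G(h\circ h')\circ g' = G(h)\circ G(h')\circ g' = G(h)\circ g = f$. Moreover, the morphism $h':a\to q$ defines a morphism $(g', a, h\circ h')\to(g,q,h)$ in the factorisation category, since both defining conditions $G(h')\circ g' = g$ and $h\circ h' = h\circ h'$ hold by construction. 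Thus every object of $\Fact{\Acyc}{G}{f}$ lies in the same connected component as a $G$-generic object.

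Second, Lemma \ref{lem:G-gen-connected} states precisely that any two $G$-generic objects of $\Fact{\Acyc}{G}{f}$ lie in a single connected component. Combining these two observations shows that $\Fact{\Acyc}{G}{f}$ is connected. Applying Proposition \ref{prp:monad-with-arities-elementary} to the endofunctor $G:(\InvGraph,\Acyc)\to(\InvGraph,\Acyc)$, we conclude that $G$ is arity-respecting, which (by Proposition \ref{prp:EM-object} applied to the monad $G$) is exactly the statement that $G$ is a monad with arities $\Acyc$.

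The substantive work has already been done in the lemmas; the present theorem is only the assembly. The non-trivial step that was Lemma \ref{lem:G-gen-connected} (and behind it Lemma \ref{lem:tau-compat}) was the main obstacle, since it required analysing how two different $G$-generic factorisations of the same morphism can be linked through the intermediate $T$-generic structure after taking into account the combinatorics of irreducible redundancies; here all that needs to be checked is that the generic reduction step of the first paragraph fits into the factorisation category.
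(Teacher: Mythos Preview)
Your proof is correct and follows exactly the paper's approach: the paper's own proof is the one-line statement ``From Lemma \ref{lem:G-genfact}, Lemma \ref{lem:G-gen-connected} and Proposition \ref{prp:monad-with-arities-elementary} we obtain'', and you have simply spelled out the implicit connecting step (factoring an arbitrary object through a $G$-generic one via Lemma \ref{lem:G-genfact}) that makes this citation work. The final appeal to Proposition \ref{prp:EM-object} is harmless but unnecessary, since Proposition \ref{prp:monad-with-arities-elementary} together with Definition \ref{monadwitharities} already gives the conclusion directly.
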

By definition, the theory $(\Theta_G,j_G)$ associated to the monad $G$ with arities $\Acyc$ has as objects the finite connected acyclic graphs, and as morphisms the functors between the free groupoids on such graphs. Since the category $\ca Seq$ is a full subcategory of $\Acyc$, this theory restricts to a theory $(\widetilde{\Theta}_1,\tilde{j}_1)$ whose objects are natural numbers, and whose morphisms $n \to m$ are functors $Gk_![n] \to Gk_![m]$. But $Gk_![n]$ is the chaotic category on the set $\{0,...,n\}$, i.e. a functor $Gk_![n] \to Gk_![m]$ is uniquely determined by its object map. Thus $\widetilde{\Theta}_1$ is the category of non-empty finite sets and set mappings, which is sometimes denoted $\Delta_\sym$. Grothendieck's symmetric simplicial nerve characterisation \cite{G} of a groupoid would follow if we could apply the nerve theorem to $(\widetilde{\Theta}_1,\tilde{j}_1)$. However Proposition \ref{prp:S-not-arities-for-G} says that we cannot since $\ca Seq$ does not endow $G$ with arities. On the other hand we can apply the nerve theorem to  $(\Theta_G,j_G)$ by Theorem \ref{thm:A-arities-for-G}, and doing so recovers the symmetric simplicial characterisation of groupoids because of
\begin{prp}\label{prop:sym-simp-char}
The inclusion $(\widetilde{\Theta}_1,\tilde{j}_1)\inc(\Theta_G,j_G)$ is a theory equivalence.
\end{prp}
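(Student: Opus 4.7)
The plan is to establish the theory equivalence in two stages: first, to show that the underlying functor $\kappa: \widetilde{\Theta}_1 \hookrightarrow \Theta_G$ is an equivalence of categories, and second, to verify that the induced equivalence of presheaf categories restricts to an equivalence between the corresponding subcategories of models.

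For the first stage, the key structural observation is that any $A \in \Acyc$ is a tree, so between any two vertices there is a unique undirected path, and consequently the free groupoid $GA$ is the codiscrete groupoid on the vertex set $A_0$. Any bijection $A_0 \cong \{0, \ldots, n\}$ with $n+1 = |A_0|$ therefore extends uniquely to an isomorphism $GA \cong Gk_![n]$ in $\Theta_G$, so the fully faithful inclusion $\kappa$ is essentially surjective, hence an equivalence of categories. Consequently the restriction functor $\kappa^*: \PSh{\Theta_G} \to \PSh{\widetilde{\Theta}_1}$ is an equivalence of presheaf categories. The commutative square of arity inclusions
\begin{diagram}[small]
\mathcal{Seq} & \rInto^{\iota} & \Acyc \\
\dTo^{\tilde{j}_1} & & \dTo_{j_G} \\
\widetilde{\Theta}_1 & \rInto^{\kappa} & \Theta_G
\end{diagram}
gives $\tilde{j}_1^* \kappa^* = \iota^* j_G^*$, so for any $\Theta_G$-model $X$, which by Theorem \ref{T2} has the form $X = \nu_G(\mathcal{G})$ for some groupoid $\mathcal{G}$, I obtain $\tilde{j}_1^* \kappa^* X = \iota^* \nu_{\Acyc}(U\mathcal{G}) = \nu_{\mathcal{Seq}}(U\mathcal{G})$, confirming that $\kappa^* X$ is a $\widetilde{\Theta}_1$-model.

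The main obstacle will be the converse direction: showing that every $\widetilde{\Theta}_1$-model lifts along $\kappa^*$ to a $\Theta_G$-model. Given a $\widetilde{\Theta}_1$-model $X'$ with underlying involutive graph $Y$ determined by $\tilde{j}_1^* X' = \nu_{\mathcal{Seq}}(Y)$, my strategy is to extract a groupoid structure on $Y$ directly from the presheaf data of $X'$. The morphisms in $\widetilde{\Theta}_1 \cong \Delta_\sym$ that do not arise from involutive graph morphisms of sequences (i.e.\ functors between codiscrete groupoids that are not edge-preserving) have actions on $X'$ which equip $Y$ with identity edges (from the unique function $[1] \to [0]$), compositions (from functions such as $0 \mapsto 0,\, 1 \mapsto 2$ in $[1] \to [2]$), and inverses (from transpositions combined with the involution). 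The presheaf axioms on $X'$ translate into the groupoid axioms on the resulting structure $\mathcal{G}$, so $Y = U\mathcal{G}$; then $\kappa_* X' \cong \nu_G(\mathcal{G})$ is a $\Theta_G$-model, completing the theory equivalence. The delicate point is verifying all the groupoid axioms from functoriality of $X'$, but this is a mechanical check with no independent choices required, since the equivalence $\kappa^*$ guarantees that the lift is essentially unique.
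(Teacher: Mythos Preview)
Your first stage is exactly the paper's proof, and it is \emph{all} the paper does. The paper argues in two sentences: the inclusion is full by construction, and it is essentially surjective because for any finite connected acyclic graph $p$ there is a unique reduced path between any two vertices (connectedness gives existence, acyclicity gives uniqueness), so $Gp$ is the chaotic groupoid on its vertex set and hence isomorphic to $Gk_![n]$ for $n+1=|p_0|$.

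Your second stage---verifying that $\kappa^*$ carries $\Theta_G$-models to $\widetilde{\Theta}_1$-models and conversely---is not part of the paper's proof. The paper uses ``theory equivalence'' informally here (note that $(\widetilde{\Theta}_1,\tilde{j}_1)$ and $(\Theta_G,j_G)$ do not even share the same arities, so $\kappa$ is not a theory morphism in the sense of Section~3), and takes it to mean simply that $\kappa$ is an equivalence of categories. What you do in stage two is not wrong, and is indeed what one needs to deduce the symmetric simplicial nerve characterisation of groupoids from the nerve theorem applied to $(\Theta_G,j_G)$; but the paper leaves that deduction to the reader and does not fold it into the proof of the proposition. If you keep stage two, you may shorten the forward direction as you have it; for the converse, extracting the groupoid structure by hand from the $\Delta_{\sym}$-presheaf is a reasonable route, since there is no nerve theorem available on the $\mathcal{Seq}$ side (Proposition~\ref{prp:S-not-arities-for-G}).
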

\begin{proof}
The inclusion is full by definition. Let $p$ be a finite connected acyclic graph regarded as an object of $\InvGraph$. Then for any pair of vertices of $p$ there is a unique reduced path between them: existence follows from connectedness, and uniqueness from acyclicity. Thus $Gp$ is the chaotic category on its set of vertices. Thus for some $n \in \N$ one has $Gp \iso Gk_![n]$ and so the inclusion is essentially surjective on objects.
\end{proof}
\begin{rmk}The category inclusion $k:\Delta=\Theta_1\inc\widetilde{\Theta}_1=\Delta_\sym$ is compatible with the theory structures on both sides insofar as $k$ commutes with the arity-inclusion functors $j_\Delta:\Delta_0\inc\Delta$ and $j_{\Delta_\sym}:Seq\inc\Delta_\sym$, where $\Delta_0$ sits in an evident way in $Seq$. We have seen that $(\Delta,j_\Delta)$ is the homogenous theory associated to the free category monad $D_1$ on directed graphs, and that $j_{\Delta_\sym}$ is in a similar way associated to the free groupoid monad $G$ on involutive graphs. It is therefore natural to ask whether there subsists some form of generic/free factorisation system in $\Delta_\sym$.

In the simplex category $\Delta$ the generic morphisms are precisely the endpoint-preserving simplicial operators, cf. \cite[1.13]{Be}. If, accordingly, the generic morphisms in $\Delta_\sym$ are defined to be those which are endpoint-preserving and \emph{either} order-preserving \emph{or} order-reversing then any morphism in $\Delta_\sym$ factors in an essentially unique way as a generic followed by a free morphism; notice however that these generic morphisms do not compose, i.e. condition \ref{dfn:homogeneous}(iii) of a homogeneous theory is not satisfied for $\Delta_\sym$. The reason for this is simple: although $G$-generic factorisations exist by Lemma \ref{lem:G-genfact}, the induced composition of $G$-generic morphisms in the Kleisli category does not necessarily yields $G$-generic morphisms because of possible redundancies.

Nevertheless, the restriction functor $k^*:\widehat{\Delta_\sym}\to\widehat{\Delta}$ extends (under the respective nerve functors) the inclusion of the category of small groupoids into the category of small categories. In particular, left and right Kan extension along $k$ correspond to the well known reflection and coreflection of categories into groupoids.\end{rmk}
\vspace{10ex}
\vspace{30ex}

\noindent{\small\sc Universit\'e de Nice-Sophia Antipolis, Lab. J.-A. Dieudonn\'e, Parc Valrose, 06108 Nice, France.}\hspace{2em}\emph{E-mail:}
cberger$@$math.unice.fr\vspace{1ex}

\noindent{\small\sc Universit\'e Paris 7, Laboratoire PPS, Case 7014,
75205 Paris Cedex 13, France.}\hspace{2em}\emph{E-mail:} Paul-Andre.Mellies$@$pps.jussieu.fr\vspace{1ex}

\noindent{\small\sc Department of Mathematics, Faculty of Science, Macquarie University, NSW 2109, Australia.}\hspace{2em}\emph{E-mail:} mark.weber@mq.edu.au

\end{document}